\documentclass[12pt,reqno]{amsart}

\usepackage{a4wide}
\usepackage{hyperref}
\usepackage[inline]{enumitem}

\usepackage{tikz}
\usetikzlibrary{decorations.pathreplacing, tikzmark}
\usetikzlibrary{positioning}

\newtheorem{theorem}[subsection]{Theorem}
\newtheorem{lemma}[subsection]{Lemma}
\newtheorem{proposition}[subsection]{Proposition}

\providecommand{\Z}{\mathbb{Z}}
\providecommand{\N}{\mathbb{N}}
\providecommand{\C}{\mathbb{C}}
\providecommand{\R}{\mathbb{R}}
\providecommand{\Q}{\mathbb{Q}}
\providecommand{\F}{\mathbb{F}}
\providecommand{\E}{\mathop{\mathbb{E}}}
\renewcommand{\P}{\mathop{\mathbb{P}}}
\providecommand{\wh}{\widehat}
\providecommand{\wt}{\widetilde}
\newcommand{\dd}{\,\mathrm{d}}
\providecommand{\Span}{\mathop{\rm Span}\nolimits}
\providecommand{\supp}{\mathop{\rm supp}\nolimits}
\renewcommand{\Re}{\mathop{\rm Re}\nolimits}

\numberwithin{equation}{section}

\newcounter{constcntbig} 
\newcounter{constcntlittle} 

\makeatletter
\newcommand{\newconstbig}[1]{%
    \refstepcounter{constcntbig}%
    \hypertarget{const:#1}{\mbox{}}%
    \protected@write\@auxout{}%
        {\string\newlabel{const:#1}{{\arabic{constcntbig}}{\thepage}}}%
    C_{\arabic{constcntbig}}%
}
\makeatother
\makeatletter
\newcommand{\newconstlittle}[1]{%
    \refstepcounter{constcntlittle}%
    \hypertarget{const:#1}{\mbox{}}%
    \protected@write\@auxout{}%
        {\string\newlabel{const:#1}{{\arabic{constcntlittle}}{\thepage}}}%
    c_{\arabic{constcntlittle}}%
}
\makeatother

\newcommand{\refconstlittle}[1]{%
    \hyperlink{const:#1}{c_{\ref{const:#1}}}%
}
\newcommand{\refconstbig}[1]{%
    \hyperlink{const:#1}{C_{\ref{const:#1}}}%
}

\begin{document}

\title{On Rado's single equation theorem}

\author{Tom Sanders}
\address{Mathematical Institute\\
University of Oxford\\
Radcliffe Observatory Quarter\\
Woodstock Road\\
Oxford OX2 6GG\\
United Kingdom}
\email{tom.sanders@maths.ox.ac.uk}

\begin{abstract}
We show that for non-zero integers $a$ and $b$ there is a natural number $N \leq \exp(r^{2+o_{a,b;r\rightarrow \infty}(1)})$ such that in any $r$-colouring of $\{1,\dots,N\}$ there are $x,y,z$, all in the same colour class, such that $ax-ay=bz$. 
\end{abstract}

\maketitle

\section{Introduction}\label{sec.intro}

For us an $r$-colouring of a set $X$ is a cover $\mathcal{C}$ of $X$ of size $r$.  For a $k \times d$ matrix $M$ with integer entries we define $\mathfrak{R}_M:\N \rightarrow \N \cup \{\infty\}$ to be the map taking $r \in \N$ to the smallest natural $N$ such that in any $r$-colouring $\mathcal{C}$ of $[N]:=\{1,\dots,N\}$ there is some $A \in \mathcal{C}$ and $x \in A^d$ such that $Mx=0$ (and $\mathfrak{R}_M(r):=\infty$ if no such $N$ exists).

Following \cite[p425]{rad::1} we say the matrix $M$ is partition regular if $\mathfrak{R}_M(r)<\infty$ for all $r \in \N$. Rado's Theorem \cite[Satz IV, p445]{rad::1} characterises when $M$ is partition regular. In the case where $M$ is a single row, say $M=a=(a_1,\dots,a_d)$, it reduces \cite[p446]{rad::1} to saying that $a$ is partition regular if and only if there is some $I \subset [d]$ with $a_j \neq 0$ for some $j \in I$, and $\sum_{i \in I}{a_i}=0$.

In this paper our interest is in the growth of $\mathfrak{R}_a(r)$, as a function of $r$, when $a\in \Z^d$ is partition regular. In \cite[Theorem 1.5]{cwasch::0} Cwalina and Schoen show that for any partition regular $a$ there is $C_a\leq 4$ such that 
\begin{equation}\label{eqn.upper}
\mathfrak{R}_a(r) \leq \exp(r^{C_a+o_{a;r \rightarrow \infty}(1)}).
\end{equation}
There are various refinements in \cite{cwasch::0} and \cite{Kosciuszko:2025aa} for particular classes of $a$ which we shall discuss later, but in fact it has been known since work of Schur \cite[p114, Hilfssatz]{sch::4} that $C_{(1,-1,-1)}\leq 1$.  On the other hand, Fox has shown (recorded in \cite[Theorem 5]{foxkle::}) that provided $a$ is not invariant, we cannot have $C_a<1$. ($a$ is said to be invariant if $\sum_{i=1}^d{a_i}=0$, and in this case $\mathfrak{R}_a(r)=1$ for all $r$.) The main purpose of this note is to prove that we may take $C_a \leq 2$ for all partition regular $a$:
\begin{theorem}\label{thm.mn}
Suppose that $a \in \Z^d$ is partition regular.  Then
\begin{equation*}
\mathfrak{R}_a(r) = \exp(O_a(r^2\log^{O(1)}2r))
\end{equation*}
\end{theorem}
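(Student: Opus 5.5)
We reduce Theorem \ref{thm.mn} to the three-variable equation $ax-ay=bz$ from the abstract and then prove a density-increment type colouring result for that equation.

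\textbf{Reduction to three variables.} Since $a$ is partition regular but we may assume it is not invariant (otherwise $\mathfrak{R}_a(r)=1$), there is a non-empty $I\subset[d]$ with $\sum_{i\in I}a_i=0$, and $\sum_{i=1}^d a_i\neq 0$. Following the standard reduction (as in \cite{cwasch::0}), we set all variables indexed by $I$ except one to be equal to $x$, that one to $y$, and all variables outside $I$ to $z$. If $|I|\ge 2$ we may choose the split so that the coefficients sum appropriately. This converts $a\cdot v=0$ into an equation of the shape $\alpha x-\alpha y=\beta z$ with $\alpha,\beta$ non-zero integers depending only on $a$. Some care is needed when $|I|=1$ or when the split degenerates; there we instead partition $I=I_1\sqcup I_2$ so that the two partial sums are $\pm\alpha\neq 0$, which is possible after moving one index, and we possibly adjoin $z$-variables. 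The upshot is that $\mathfrak{R}_a(r)\le \mathfrak{R}_{(\alpha,-\alpha,-\beta)}(r)$, so it suffices to show $\mathfrak{R}_{(a,-a,-b)}(r)\le\exp(O_{a,b}(r^2\log^{O(1)}2r))$.

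\textbf{Main argument for $ax-ay=bz$.} Rewrite the condition as: some colour class $A$ contains $z$ with $bz\in a\cdot(A-A)$, that is, $(b/a)z\in A-A$. The plan is an iterative Schur-type argument. Maintain a large structured set $P$, a generalized arithmetic progression (or Bohr set) of dimension $k$, consisting of multiples of a suitable modulus, together with a set of colours still ``alive'' on $P$. At each step take the colour class $A$ of largest density on $P$, so its density is at least $1/r$. If $(b/a)\cdot P'$ for a suitable dilate sub-progression $P'$ avoids $A$, we would like to pass to a structured piece of $A-A$. Bogolyubov–Chang/Sanders style results (a quasipolynomial Bogolyubov lemma) show that $A-A$ contains a Bohr set of rank $O(\log^{O(1)}r)$ and radius $\exp(-O(\log^{O(1)} r))$ inside $P$. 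The colour $A$ then cannot appear on the dilate $(a/b)$-image of that Bohr set without giving a solution. Hence we pass to that dilated Bohr set with one fewer colour. After at most $r$ steps no colours remain, which is a contradiction.

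\textbf{Cost and the main obstacle.} The bookkeeping is as follows. Each step adds rank $O(\log^{O(1)} r)$ and shrinks the radius by $\exp(-O(\log^{O(1)}r))$. Over $r$ steps the total rank is $d=O(r\log^{O(1)}r)$, and the size needed is roughly $(\text{radius}^{-1})^{d}\cdot$(dilation factors $|a|,|b|$ per step). This gives $N\le \exp(O_{a,b}(r\cdot d\cdot \log^{O(1)} r))=\exp(O_{a,b}(r^2\log^{O(1)}2r))$, as required. The main obstacle is ensuring that the relative density of the chosen colour remains $\ge 1/r$ inside Bohr sets, and that Bogolyubov's lemma can be applied \emph{relative} to a Bohr set of already large rank $d$ without losses depending on $d$. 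A naive relative Chang lemma would lose factors polynomial in $d$ per step, giving $r^3$ or worse. I would first try Sanders' almost-periodicity (Croot–Sisask) method in regular Bohr sets, whose rank increment depends only on $\log(1/\text{density})$. Handling the dilations by $a/b$ requires working with Bohr sets whose frequencies are compatible with multiplication by $a$ and $b$. This is done by restricting to multiples of $ab$ at the start, at a cost of only a constant factor $O_{a,b}(1)$ per step in the exponent.
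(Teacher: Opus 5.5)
There is a genuine gap in your main step. You assert that a quasipolynomial Bogolyubov lemma makes $A-A$ contain a Bohr set of rank $O(\log^{O(1)}r)$ whenever $A$ has relative density at least $1/r$. Bogolyubov-type results (Chang, Sanders) give this for $2A-2A$, not for $A-A$, and for two summands the claim is false.

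In $\F_2^n$ take Ruzsa's niveau set $A=\{x:|x|\geq n/2+\sqrt{n}/2\}$, whose density is at least an absolute constant $c>0$. Any $x,y\in A$ share at least $\sqrt{n}$ coordinates equal to $1$, so $x-y=x+y$ has weight at most $n-\sqrt{n}$. On the other hand, every subspace of codimension $t$ contains a vector of weight at least $n-t$. Hence $A-A$ contains no subspace of codimension less than $\sqrt{n}$, and in particular no Bohr set of rank less than $\sqrt{n}$ and width less than $2$. Colour with $A$ and a partition of its complement into $O(1/c)$ pieces of smaller density. This is an $O(1)$-colouring whose densest class defeats your step. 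Ruzsa's niveau sets in cyclic groups, being symmetric, exhibit the same phenomenon there.

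What almost periodicity does give is a low-rank Bohr set $B$ on all but an $\epsilon$-fraction of which $1_A\ast 1_{-A}>0$. But then you only learn that $A$ has small density on $(a/b)\cdot B$, not that it is absent. On the much smaller Bohr sets you pass to later, $A$ can reappear with density $\gg 1/r$, so the colour-elimination induction does not close. In effect you are running the argument for four-variable equations such as $a(x-y)+a(x'-y')=bz$, where $2A-2A$ is available. The equation $ax-ay=bz$ is precisely where it fails, and no reduction avoids it, since $(a,-a,-b)$ is itself partition regular. The obstacle you flag is also not the real one: losses polynomial in the rank $d$ cost only $O(\log 2d)$ per step in the logarithm of the width, which is harmless.

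The paper needs no structure inside $A-A$. At each step it picks $j$ with $b\cdot A_j$ of density at least $1/r$ (with respect to a measure on the current Bohr set). It then applies a Kelley--Meka dichotomy localised to a chain of regular Bohr sets (Lemma \ref{lem.itlem}: spectral positivity, sifting, a local Croot--Sisask lemma and a local Chang lemma). Either $\langle 1_{a\cdot A_j}\ast 1_{-a\cdot A_j},1_{b\cdot A_j}\rangle_{L_2(\mu_G)}$ is already large, or $a\cdot A_j$ gains a density increment by a factor $1+\Omega(1)$ on a Bohr set with $O(\log^8 2r)$ new frequencies.

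Density is measured by the hereditary quantity $S_{i,j}=\min_\mu\|1_{a\cdot A_j}\ast\mu\|_\infty$ over probability measures $\mu$ on the current Bohr set. This can only increase when passing to sub-Bohr sets, which is exactly what your elimination scheme lacks. So each colour is incremented $O(\log 2r)$ times, plus once when $S_{i,j}$ first reaches $1/2r$ (via Lemma \ref{lem.large}). This gives $O(r\log 2r)$ steps, rank $O(r\log^9 2r)$ and width $\exp(-O_{a,b}(r\log^3 2r))$, whence the exponent $r^2\log^{O(1)}2r$.

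Because the output is a count rather than a single solution, the paper can colour $\{-N,\dots,N\}$ by $\{A,-A:A\in\mathcal{C}\}\cup\{\{0\}\}$ and discard the trivial solutions with $z=0$, $x=y$ (note that $0$ lies in every Bohr set). Your sketch does not say how negatives and these trivial solutions are handled.

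Your reduction to three variables is fine once you choose $j\in I$ with $a_j\neq 0$, which Rado's criterion supplies, so the case $|I|=1$ you worry about cannot occur. Then set $x_j=y$, $x_k=x$ for $k\in I\setminus\{j\}$, and $x_k=z$ for $k\notin I$.
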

In \cite[Theorem 1.7]{cwasch::0} Cwalina and Schoen show that one may take $C_a \leq 2$ for a wide class of $a$, and \cite[Theorem 1.6]{cwasch::0} shows one may take $C_a \leq 3$ for all $a$ involving at least four variables. On the other hand the recent breakthrough work of Kelley and Meka \cite{kelmek::0} provides a way of lifting counts for equations with three variables into counts for equations with four variables and so it is natural to expect this to give an improvement to Cwalina and Schoen's results. 

In \S\ref{sec.model} we shall illustrate our argument for a toy version of the problem. This is essentially just the sparsity-expansion dichotomy of Chapman and Prendiville described in \cite[Lemma 2.3]{chapre::} combined with the quantitative refinements of Kelley and Meka in \cite[Lemma 1.5]{kelmek::0}. No difficulties arise in combining them.

In \S\ref{sec.bourganised}, we use the Bohr set machinery of Bourgain \cite{bou::5} to translate the above argument into a proof of Theorem \ref{thm.mn}. Bloom and Sisask \cite{Bloom:2023aa} have already extended Kelley and Meka's work to Bohr sets, and we encounter no new difficulties here. The fact that Theorem \ref{thm.mn} gives $C_a\leq 2$ rather than $C_a\leq 1$ looks like a typical artefact of this machinery: the basic argument is iterative and takes $r^{1+o(1)}$ steps and at each step we have to pay the price of an additional dilation. 

\subsection{When can we do better?} The aforementioned fact that $C_{(1,-1,-1)}=1$ can be proved by first showing that $\mathfrak{R}_{(1,-1,-1)}(r)+2 \leq R(3,\dots,3)$, where the right hand side is the smallest natural $n$ such that any $r$-colouring of the complete graph on $n$ vertices contains a monochromatic triangle. A bound on this Ramsey number such as \cite[Corollary 3]{gregle::0} can then be used to complete the argument.

The first part of this argument has a generalisation in \cite[(3.4), p181]{Abbott:1972aa} which shows that $\mathfrak{R}_M(r) +2 \leq R(k,\dots,k)$ where $M$ is the $\binom{k-1}{2} \times \binom{k}{2}$ integer-valued matrix encoding\footnote{Meaning, if the rows of $M$ are indexed by pairs $(i,j)$ with $1 \leq i < j \leq k-1$ and the columns are indexed by pairs $(i,j)$ with $1 \leq i < j \leq k$, then the $(i,j)$th row of $M$ has a $1$ in columns $(i,j)$ and $(j,j+1)$, a $-1$ in column $(i,j+1)$, and $0$s elsewhere.} the system of equations
\begin{equation}\label{eqn.system}
x_{i,j}+x_{j,j+1}=x_{i,j+1} \text{ whenever }1 \leq i < j \leq k-1.
\end{equation}
The same \cite[Corollary 3]{gregle::0} implies that $R(k,\dots,k)\leq \exp(r^{1+o_k(1)})$, and since $\mathfrak{R}_a(r) \leq \mathfrak{R}_M(r)$ for any $a$ in the $\Q$-row span of $M$ we conclude that $C_a=1$ for any $a$ in the $\Q$-row span of $M$. In particular, if the $x_{i,j}$s satisfy (\ref{eqn.system}) then 
\begin{equation*}
x_{1,2}+x_{2,3}+\cdots + x_{k-1,k}=x_{1,k},
\end{equation*}
 so $C_{(1,\dots,1,-1)}=1$ where $1$ occurs $k-1$ times in the row. Similarly, 
 \begin{equation*}
 x_{1,k-1}+\cdots + x_{k-2,k-1} + (k-1)x_{k-1,k} = x_{1,k}+\cdots + x_{k-2,k},
 \end{equation*}
  so $C_{(1,\dots,1,k-1,-1,\dots,-1)}=1$ where $1$ and $-1$ both occur $k-1$ times in the row. These results are due to Ko{\'s}ciuszko \cite[Theorems 6 \& 7]{Kosciuszko:2025aa}.
  
 One can make more vectors $a$ with $C_a=1$ from existing ones by appending a vector whose entries sum to $0$, so that from $C_{(1,-1,-1)}=1$ we deduce $C_{(1,\dots,1,-1,\dots,-1)}=1$ where $1$ occurs $k$ times and $-1$ occurs $k+1$ times. This is the monotonicity of $k$-sum-free sets discussed on \cite[p604]{cwasch::0}. Combining this with our earlier observations, for $a,b\in \N$ we have $C_{(a,-a,b,1,\dots,1,-1,\dots,-1)}=1$ if the $1$s and $-1$s each occur $b$ times. By way of comparison, Theorem \ref{thm.mn} comes down to showing $C_{(a,-a,b)}\leq 2$.

\section{The toy}\label{sec.model}

Toy problems over finite fields have been standard expository devices since the paper of Green \cite{gre::9} (updated in the sequels \cite{wol::3} \& \cite{Peluse:2024aa}). For our toy we replace $\{1,\dots,N\}$ by the vector space $G:=\F_q^n$, where $\F_q$ is a finite field with $q$ elements. We fix $G$ in this way for the remainder of this section only. We shall show the following:
\begin{theorem}\label{thm.model}
Suppose that $a,b \in \F_q^*$ and $\mathcal{C}$ is an $r$-colouring of $G$. Then there is a colour class $A \in \mathcal{C}$ and $\exp(-O_q(r\log^92r)) |G|^2$ triples $(x,y,z) \in A^3$ such that $ax-ay = bz$.
\end{theorem}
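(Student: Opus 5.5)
We run an iterative sparsity--expansion argument in the spirit of Chapman and Prendiville \cite[Lemma 2.3]{chapre::}, with the Kelley--Meka machinery \cite[Lemma 1.5]{kelmek::0} supplying the quantitative density increments. Throughout we maintain a subspace $V \leq G$ of codimension $d$ and a set $\mathcal{C}' \subset \mathcal{C}$ of \emph{active} colours. The invariant is that $\bigcup_{A \in \mathcal{C}'} A$ covers a coset $x_0+V$ almost entirely, or at least covers a proportion $1-o(1)$ of it. Initially $V=G$, $x_0=0$ and $\mathcal{C}'=\mathcal{C}$.

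At each stage, some active colour $A$ has relative density $\alpha:=|A\cap(x_0+V)|/|V| \geq 1/(2r)$ in the coset. There are two cases.

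\emph{Expansion.} Suppose $A$ is spread, in the Kelley--Meka sense, inside $x_0+V$: it has no significant density increment on a subspace of $V$ of small codimension. Then $A$ contains roughly $\alpha^3|V|^2$ solutions to $ax-ay=bz$. Note that $ax-ay=b z$ with $x,y\in x_0+V$ forces $z \in V$, which need not meet $x_0+V$. To handle this, we work with $A-x_0$ and the equation in the homogeneous form $a(x-x_0)-a(y-x_0)=b z$. We arrange for $z$ to lie in the same coset by choosing $x_0\in V$, i.e.\ by always passing to subspaces rather than cosets. This is achieved by pigeonholing over the cosets of the new subspace at each increment, and we accept losing translates. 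The cleaner route is to phrase everything via a sifting or dependent-random-choice argument: Kelley--Meka's Lemma 1.5 gives that either $\langle 1_A * 1_{-A}, 1_{(b/a)\cdot A}\rangle$ is close to $\alpha^3$, or there is a subspace $V'\leq V$ of codimension $O_q(\log^{O(1)}(1/\alpha))$ on which $A$ (or a dilate of it) has density at least $\alpha(1+c)$. In the first case we have found the required $\exp(-O_q(r\log^9 2r))|G|^2$ triples once we multiply by $|V|^2/|G|^2 \geq q^{-2d}$.

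\emph{Sparsity and increment.} Otherwise we pass to a translate of $V'$ on which $A$ increases its density by a factor $1+c$. We also use the Chapman--Prendiville dichotomy. If some colour $A$ is very dense, then $A-A$ essentially covers $V'$, by a popular-differences argument. Hence $(b/a)^{-1}(A-A)\cap A'$ being empty for the other colours $A'$ means that those colours are sparse on a subspace and can be discarded from $\mathcal{C}'$. Indeed, if $x-y$ for $x,y\in A$ landed in $(b/a)A$ we would have a solution. So the set $(a/b)(A-A)$ of popular differences must avoid $A$ itself. Once $A$ reaches density $\geq 1/2$ relative to the remaining mass, $A$ is removed and the active palette shrinks by one.

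Each colour can undergo $O(\log r)$ increments before being forced out, since densities are bounded by $1$ and start at $\geq 1/(2r)$. Each increment costs codimension $O_q(\log^{O(1)} 2r)$. Summing over $r$ colours gives a total codimension $d=O_q(r\log^{9}2r)$, with the exponent $9$ coming from Kelley--Meka's bounds. The process ends either in the expansion case, which yields the triples, or with $\mathcal{C}'$ empty. The latter is impossible because the active colours always cover most of the coset.

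The main obstacle is the bookkeeping of the invariant under discarding colours. When $A$ becomes forbidden (its dilated difference set is large and avoids $A$), we need to pass to a subspace on which the remaining colours still cover almost everything but $A$ is essentially absent. This requires a structured complement: $(a/b)(A-A)$ must contain most of a subspace coset. That is exactly where Kelley--Meka's almost-periodicity (Bogolyubov-type) statement is needed rather than plain Fourier analysis, and I would try invoking \cite[Lemma 1.5]{kelmek::0} to get $(a/b)(A-A)$ almost containing a coset of a subspace of codimension $O_q(\log^{O(1)}2r)$.
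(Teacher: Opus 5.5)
\textbf{There is a genuine gap.} Your overall shape is right: expansion versus increment, $O(\log r)$ increments per colour, and a loss of $q^{-2d}$ at the end. But the gap is exactly the coset issue you flag, and your proposed fix does not close it.

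If $x_0\notin V$, then for $x,y,z\in x_0+V$ we have $ax-ay\in V$ but $bz\in bx_0+V\neq V$. So there are no solutions at all inside the coset. Kelley--Meka also yields nothing there: it compares $\langle 1_A\ast 1_{-A},1_D\rangle$ with $\alpha^2$ times the density of $D$ in $V$, and $D=(b/a)\cdot A$ lies in $(b/a)x_0+V$, so that density is $0$. Nor can you pigeonhole back to a subspace: the increment is only guaranteed on some coset $x+V'$, and the colour may have density $0$ on $V'$ itself.

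Two further steps also fail as stated.
\begin{itemize}
\item Your count of $O(\log r)$ increments per colour needs a potential that cannot decrease, and density on the current coset is not one. Moving to a coset chosen to increment one colour can wipe out the density of every other colour.
\item Discarding is not stable. Being mostly absent from a subspace $W$ is not inherited by subspaces of $W$ of larger codimension: density $\epsilon$ on $W$ can become $\min\{1,\epsilon q^k\}$ on a codimension-$k$ subspace. So the invariant that the active colours cover most of the current coset is not maintained, and your final ``$\mathcal{C}'$ cannot become empty'' step has no support.
\end{itemize}

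\textbf{What the paper does instead} is decouple $z$ from $x,y$ and track a hereditary density.
\begin{itemize}
\item Always work on a genuine subspace $V_i$. It is covered by $\mathcal{C}$ simply because $G$ is, so some $j$ has $\mu_{V_i}(b\cdot A_j)\ge 1/r$. Take $D:=(b\cdot A_j)\cap V_i$ for the $z$ variable.
\item For $x,y$ take the densest coset, $A:=(x_i-a\cdot A_j)\cap V_i$. This is harmless because only $ax-ay$ enters. Since $(ab^{-1})\cdot V_i=V_i$, it has $\mu_{V_i}(A)\ge\mu_{V_i}(a\cdot A_j)=\mu_{V_i}(b\cdot A_j)\ge 1/r$.
\item Proposition \ref{prop.sum} then gives either at least $\frac{1}{2r^3}\mu_G(V_i)^2|G|^2$ monochromatic solutions, or a subspace $V_{i+1}\le V_i$ of codimension $O(\log^8 2r)$ with $S_{i+1,j}\ge(1+\Omega(1))S_{i,j}$. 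Here $S_{i,j}:=\|1_{a\cdot A_j}\ast\mu_{V_i}\|_\infty$ is the maximal density of $a\cdot A_j$ on a coset of $V_i$.
\item Because $\mu_{V_i}\ast\mu_{V_k}=\mu_{V_k}$ for $i\ge k$, $S_{i,j}$ is nondecreasing in $i$ and bounded by $1$. It is at least $1/r$ whenever colour $j$ is selected. Hence each colour is incremented $O(\log 2r)$ times, there are $O(r\log 2r)$ steps, and the total codimension is $O(r\log^9 2r)$.
\end{itemize}
The colour is re-chosen afresh at every step and no colour is ever discarded. So the active palette and the Bogolyubov or popular-differences input can be dropped entirely.
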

This toy is supposed to illustrate the argument we will eventually use for the special case of Theorem \ref{thm.mn} where $a=(a,-a,-b)$ with $a,b \in \Z^*$, which will itself imply the more general case.

We need Proposition \ref{prop.sum} below, and to record this we need some notation. For $V \leq G$ we write $\mu_V$ for the uniform probability measure on $V$ extended to be a probability measure on $G$, \emph{i.e.\ }$\mu_V(A)=|A \cap V|/|V|$ for all $A \subset G$. We also write $\ast$ for convolution on $G$ so that for functions $f$ and $g$ on $G$, and a measure $\mu$ on $G$ (defined on all subsets of $G$) we have
\begin{equation}\label{eqn.earlier}
f \ast g(x):=\int{f(x-y)g(y)\dd\mu_G(y)}, \text{ and } f \ast \mu(x):=\int{f(x-y)\dd\mu(y)},
\end{equation}
and
\begin{equation}\label{eqn.inner}
\langle f,g\rangle_{L_2(\mu)}:=\int{f(x)\overline{g(x)}\dd\mu(x)}.
\end{equation}

\begin{proposition}\label{prop.sum} Suppose that $V \leq G$, $A,D \subset V$ are non-empty with $\alpha:=\mu_V(A)$ and $\delta:=\mu_V(D)$, and $\epsilon \in (0,1]$ is a parameter. Then either
\begin{equation*}
|\langle 1_{-A}  \ast  1_{A}, 1_D\rangle_{L_2(\mu_G)}-\alpha^2\delta\mu_G(V)^2| \leq \epsilon \alpha^2\delta\mu_G(V)^2; 
\end{equation*}
 or else there is $V' \leq V$ with $\dim V/V' = O_\epsilon((\log^4 2\alpha^{-1})(\log^4 2\delta^{-1}))$ such that $\|1_A \ast \mu_{V'}\|_\infty \geq (1+\Omega(\epsilon))\alpha$.
\end{proposition}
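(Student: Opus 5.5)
This is the Kelley--Meka sparsity-to-density-increment argument in the finite-field setting. First I normalise. Everything lives inside $V$, so I work with $\mu_V$ in place of $\mu_G$. Since $\langle 1_{-A}\ast 1_A,1_D\rangle_{L_2(\mu_G)} = \mu_G(V)^2\langle 1_{-A}\ast_V 1_A,1_D\rangle_{L_2(\mu_V)}$, where $\ast_V$ is convolution with respect to $\mu_V$, it suffices to prove the statement with $G$ replaced by $V$. Write $f:=1_A-\alpha 1_V$ for the balanced function. Expanding bilinearly, the quantity
\begin{equation*}
\langle 1_{-A}\ast 1_{A},1_D\rangle-\alpha^2\delta
\end{equation*}
equals $\langle 1_{-A}\ast f,1_D\rangle$ plus a term $\alpha\langle 1_{-V}\ast f,1_D\rangle$. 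The second term vanishes, because $f$ has mean zero on $V$. So if the first alternative fails, then $|\langle 1_{-A}\ast f,1_D\rangle|>\epsilon\alpha^2\delta$.

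Next I run the H\"older-lifting step of Kelley--Meka. Rewrite the correlation as $\langle f,1_A\ast 1_D\rangle$ (with the appropriate reflection), and apply H\"older with an even exponent $p$ to the convolution $g:=1_{-A}\ast 1_D$ against the balanced function. Following \cite[Lemma 1.5]{kelmek::0}, one gets from the large correlation that $\|1_{A}\circ 1_A - \alpha^2\|_{L_p(\mu_V)}\geq \Omega(\epsilon)\alpha^2$ with $p=O_\epsilon(\log 2\delta^{-1})$. Here $\circ$ denotes the difference convolution. The mechanism is the standard one: $D$ has density $\delta$, so H\"older with exponent $p\approx \log\delta^{-1}$ costs only a constant factor. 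One also needs the ``unbalancing'' step to pass from $|f\ast\cdot|$ in $L_p$ to a one-sided bound $\|1_A\circ 1_A\|_{L_{p'}}\geq(1+\Omega(\epsilon))\alpha^2$ with $p'=O_\epsilon(p)$. This uses the nonnegativity of the Fourier transform of $f\circ f$ via the even-moment trick.

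Then I apply sifting together with Croot--Sisask almost periodicity. Take the set $S:=\{x: 1_A\circ 1_A(x)\geq(1+\epsilon/4)\alpha^2\}$. By dependent random choice (Kelley--Meka's sifting lemma), I can find $A_1\subset A+s$ and $A_2\subset A$ of densities at least $\alpha^{O_\epsilon(p')}$ such that $\langle \mu_{A_1}\circ\mu_{A_2},1_S\rangle\geq 1-\epsilon/100$. Croot--Sisask almost periodicity, in the $\F_q^n$ form with Chang/Bogolyubov-type subspace output, then gives a subspace $V'\leq V$ of codimension
\begin{equation*}
O_\epsilon(p'^{\,2}\log^2 2(\text{density})^{-1})\cdot\log^{O(1)}
\end{equation*}
such that $\langle \mu_{A_1}\circ\mu_{A_2}\ast\mu_{V'},1_{A}\circ\cdots\rangle$ is essentially unchanged. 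Consequently $\|1_A\ast\mu_{V'}\|_\infty\geq(1+\Omega(\epsilon))\alpha$. The densities of $A_i$ have logarithm $O_\epsilon(\log 2\delta^{-1}\log 2\alpha^{-1})$, and squaring this gives the claimed codimension $O_\epsilon(\log^4 2\alpha^{-1}\log^4 2\delta^{-1})$, allowing for the extra logs from the Chang step.

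The main obstacle is bookkeeping the exponent dependencies so that the $\delta$ and $\alpha$ contributions factor as stated. The crucial point is that $p$ depends only on $\delta$, not on $\alpha$. I would simply quote \cite[Lemma 1.5]{kelmek::0} and the almost-periodicity result from \cite{kelmek::0} or \cite{Bloom:2023aa} as black boxes, checking only that their parameters compose correctly.
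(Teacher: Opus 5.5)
Your outline is the route the paper takes: it treats this as a minor variant of \cite[Proposition 13]{Bloom:2023aa}, equivalently Lemma \ref{lem.itlem} with $B_0=\cdots=B_5=V$, and runs H\"older lifting, spectral positivity, sifting, Croot--Sisask with Chang, and a final averaging over the large-value set of $1_{-A}\ast 1_A$ --- though note that since $f$ has mean zero on $V$ one has $1_{-A}\ast f=f_-\ast f$, which is already positive definite, so H\"older is applied to $f_-\ast f$ against $1_D$ (not to $1_{-A}\ast 1_D$ against $f$, as you phrase it). The one thing to correct is your exponent count, which is not ``squaring plus extra logs from Chang'': the fourth power comes entirely from the almost-periodicity step, where Croot--Sisask is run with error $\epsilon/l$ and exponent $p\approx\log(1/\tau)$, giving $\log\mu(X)^{-1}\approx l^2\log(1/\tau)\log(1/\sigma)$ with $l\approx\log(1/\sigma)$ and $\sigma,\tau=\alpha^{O_\epsilon(\log 2\delta^{-1})}$ the sifted densities; Chang then costs only linearly in $\log\mu(X)^{-1}$.
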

This is a minor variant of \cite[Proposition 13, p32]{Bloom:2023aa} with $1_A \ast 1_A$ replaced by $1_{-A} \ast 1_{A}$ and rescaled to be stated for subgroups of a larger ambient group which is helpful for our application. 

\begin{proof}[Proof of Theorem \ref{thm.model}]
Write $\mathcal{C}:=\{A_1,\dots,A_r\}$. We proceed iteratively to construct $G=:V_0 \geq V_1\geq \dots$. Writing $\cdot$ for the scalar multiplication of $\F_q$ on $G$, set
\begin{equation*}
S_{i,j}:=\|1_{a\cdot A_j}\ast \mu_{V_i}\|_\infty \text{ for all } j \in [r].
\end{equation*}
Since $V_i \leq V_{k}$ whenever $i \geq k$ we have $\mu_{V_i} \ast \mu_{V_k} = \mu_{V_k}$ and hence
\begin{equation}\label{mon}
S_{i,j} \geq S_{k,j}\text{ whenever }i \geq k, \text{ and } S_{i,j} \leq 1 \text{ for all }i.
\end{equation}

At step $i$ of the iteration we shall show that either
\begin{enumerate}
\item\label{c1} there is some $j \in [r]$ such that
\begin{equation*}
\langle 1_{a\cdot A_j} \ast 1_{-a\cdot A_j},1_{b \cdot A_j}\rangle_{L_2(\mu_G)} \geq \frac{1}{2r^3}\mu_G(V_i)^2;\text{ or }
\end{equation*}
\item\label{c2} there is $V_{i+1} \leq V_i$ and $j \in [r]$ such that
\begin{enumerate}
\item\label{c21} $\dim V_i/V_{i+1} =O(\log^82r)$;
\item\label{c22} $S_{i+1,j} \geq (1+\Omega(1))S_{i,j}$; and
\item\label{c23} $S_{i,j} \geq 1/r$.
\end{enumerate}
\end{enumerate}
We stop the iteration the first time we are in case (\ref{c1}). Suppose that we are in case (\ref{c2}) for a particular $j$ at steps $i_1 <i_2<\dots < i_k$ of the iteration. Then by (\ref{mon}) and (\ref{c22}) \& (\ref{c23}) we have
\begin{align*}
1 \geq S_{i_k+1,j} \geq (1+\Omega(1))S_{i_k,j} & \geq (1+\Omega(1))S_{i_{k-1}+1,j}\\ & \geq \qquad \dots \qquad  \geq (1+\Omega(1))^kS_{i_1,j} \geq (1+\Omega(1))^k\cdot (1/r).
\end{align*}
It follows that $k=O(\log 2r)$. Since there are $r$ possible values for $j$ we conclude that we must have terminated the iteration at step $i_0=r\cdot O(\log 2r)$, in which case
\begin{equation*}
\dim G/V_{i_0} =\dim V_0/V_1 + \cdots + \dim V_{i_0-1}/V_{i_0} =i_0\cdot O(\log^82r) = O(r\log^92r)
\end{equation*} by (\ref{c21}).

Since $\mu_G(V_{i_0})=q^{-\dim G/V_{i_0}}$ and $|G|^2\langle 1_{a\cdot A_j} \ast 1_{-a\cdot A_j},1_{b \cdot A_j}\rangle_{L_2(\mu_G)} $ is exactly the number of solutions to $ax-ay=bz$ for $x,y,z \in A_j$ we have the conclusion of Theorem \ref{thm.model} from (\ref{c1}) as claimed.

It remains to show that at each stage of the iteration we are either in case (\ref{c1}) or (\ref{c2}). Suppose we are at step $i$ of the iteration. Since $b \in \F_q^*$ we have $b\cdot V_i=V_i$ and $\mathcal{C}$ is a cover of $G$, and there is some $j=j(i) \in [r]$ such that $\mu_{V_i}(b\cdot A_j) \geq \frac{1}{r}$. Since $a,b \in \F_q^*$ we have $(ab^{-1})V_i=V_i$, and
\begin{equation*}
S_{i,j}=\|1_{a\cdot A_j}\ast \mu_{V_i}\|_\infty \geq \mu_{V_i}(a\cdot A_j) = \mu_{V_i}(b \cdot A_j).
\end{equation*}
In particular $S_{i,j} \geq 1/r$. Let $x_i\in G$ be such that 
\begin{equation*}
\mu_{V_i}(x_i-a \cdot A_j )=1_{a \cdot A_j} \ast \mu_{V_i}(x_i)=\|1_{a\cdot A_j}\ast \mu_{V_i}\|_\infty.
\end{equation*}
Apply Proposition \ref{prop.sum} to $V_i\leq G$ with $\epsilon:=\frac{1}{2}$, and $A:=(x_i-a \cdot A_j)\cap V_i$ and $D:=(b \cdot A_j)\cap V_i$ to get that either
\begin{align*}
\langle 1_{a \cdot A_j} \ast 1_{-a\cdot A_j}, 1_{b \cdot A_j}\rangle_{L_2(\mu_G)} & =  \langle 1_{-(x_i-a \cdot A_j)} \ast 1_{x_i-a\cdot A_j}, 1_{b \cdot A_j}\rangle_{L_2(\mu_G)}\\ &\geq \langle 1_{-A} \ast 1_{A},1_D\rangle_{L_2(\mu_{G})}\\ &\geq \frac{1}{2}\mu_{V_i}(A)^2\mu_{V_i}(D)\mu_G(V_i)^2 \geq \frac{1}{2r^3}\mu_G(V_i)^2;
\end{align*}
or else there is $V_{i+1} \leq V_i$ with $\dim V_i/V_{i+1}=O(\log^82r)$ and $S_{i+1,j} \geq (1+\Omega(1))S_{i,j}$.  In the first case we are in (\ref{c1}); in the second we are in (\ref{c2}). The result is proved.
\end{proof}

\section{Proof of Theorem \ref{thm.mn}}\label{sec.bourganised}

We shall show the following result.
\begin{theorem}\label{thm.keycount}
There is a function $f:\Z^*\times \Z^* \times \N^* \rightarrow (0,1]$ such that if $\mathcal{C}$ is an $r$-colouring of $\{-N,\dots,N\}$, then there is a colour class $A \in \mathcal{C}$ and at least $f(a,b,r)N^2$ triples $(x,y,z) \in A^3$ such that $ax-ay = bz$. Moreover, $f(a,b,r)=\exp(-O_{a,b}(r^2\log^{O(1)} 2r))$.
\end{theorem}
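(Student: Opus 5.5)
We will transfer the density-increment argument from the proof of Theorem \ref{thm.model} to the integers, replacing subspaces $V_i$ by a nested sequence of regular Bohr sets in $G:=\Z/N'\Z$. Here $N'$ is a prime of size $\Theta_{a,b}(N)$, chosen large enough that $a\cdot\{-N,\dots,N\}$ and $b\cdot\{-N,\dots,N\}$ embed without wraparound. Solutions to $ax-ay=bz$ in $G$ among elements of size at most $N$ are then genuine integer solutions, provided we work inside Bohr sets of small radius.

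The first ingredient is a Bohr-set version of Proposition \ref{prop.sum}, which we take from Bloom and Sisask \cite{Bloom:2023aa}. It says the following. Let $B$ be a regular Bohr set of rank $d$ and radius $\rho$, let $B'$ be a suitably narrow dilate of it, and let $A\subset B$ and $D\subset B'$ have relative densities $\alpha$ and $\delta$. Then either $\langle 1_{-A}\ast 1_A,1_D\rangle \geq \frac12\alpha^2\delta\,\mu_G(B)\mu_G(B')$, or there is a regular Bohr set $B''\subset B'$ with the following properties:
\begin{itemize}
\item rank $d+O(\log^{O(1)}2\alpha^{-1}\log^{O(1)}2\delta^{-1})$;
\item radius $\rho\cdot\Omega(\alpha\delta/d)^{O(1)}$;
\item $\|1_A\ast\mu_{B''}\|_\infty\geq(1+\Omega(1))\alpha$.
\end{itemize}

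We will run the iteration with $B_0$ essentially the interval $\{-\eta N,\dots,\eta N\}$ for small $\eta=\eta(a,b)$, and with $S_{i,j}:=\|1_{a\cdot A_j}\ast\mu_{B_i}\|_\infty$.

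The second ingredient, and the main technical difference from the toy, is that the monotonicity (\ref{mon}) and the choice of colour at each step now hold only approximately. Dilation by $b$ does not preserve $B_i$, and $\mu_{B_i}\ast\mu_{B_k}\neq\mu_{B_k}$. To deal with the choice of colour, we will work with $B_i$ together with its dilates. Because $\mathcal{C}$ covers, some $j$ has $\mu_{b\cdot B_i'}(b\cdot A_j)\ge 1/r$ on a narrow translate $x+b\cdot B'_i$, where $B_i'=B_i^{(\epsilon)}$ is a small dilate. The set $b\cdot B'_i$ sits inside a Bohr set of the same rank with frequencies multiplied by $b^{-1}$. Since $ab^{-1}$ need not be integral, we will instead apply Bohr-set arguments to $B_i$ and to the Bohr sets $a\cdot B_i$ and $b\cdot B_i$, all refined to a common Bohr set by intersecting frequency sets. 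This costs only an $O_{a,b}(1)$ loss in rank and radius per step. Regularity then gives $\mu_{B_i}\ast\mu_{B_{i+1}}\approx\mu_{B_i}$ when $B_{i+1}$ is a narrow dilate, with error $O(d\cdot\text{ratio})$. Consequently $S_{i+1,j}\ge(1-o(1))S_{k,j}$, and the multiplicative gain $1+\Omega(1)$ survives.

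Counting proceeds exactly as in the proof of Theorem \ref{thm.model}. Each colour can increment only $O(\log 2r)$ times, so the iteration stops after $O(r\log 2r)$ steps. The final Bohr set then has rank $d=O(r\log^{O(1)}2r)$, and its radius is the product of $O(r\log2r)$ factors, each of size $(\Omega_{a,b}(1/rd))^{O(1)}$, giving radius at least $\exp(-O_{a,b}(r\log^{O(1)}2r))$. The size bound $\mu_G(B)\ge(\rho/C)^{d}$ then gives
\[
\mu_G(B_{i_0})\ge \exp(-O_{a,b}(r^2\log^{O(1)}2r)).
\]
This is where the square comes from: rank times log-radius. In case (\ref{c1}) we obtain $\gg r^{-3}\mu_G(B_{i_0})^2N^2$ solutions in $G$. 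All variables lie in $\{-N,\dots,N\}$ and the Bohr sets sit within the $\eta N$ interval, so these are integer solutions, which yields $f(a,b,r)$ as claimed.

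The main obstacle will be the bookkeeping of dilations and regularity. First, the translate $x_i-a\cdot A_j$ must lie in the wide Bohr set while $b\cdot A_j$ lies in a narrow one, so that the convolution-inner-product inequality holds with errors absorbed. Second, passing from density on $B_i$ to density on $B_{i+1}$ must not lose the increment. I would handle both by taking every dilation ratio to be $c\,\alpha\delta/(rd)$ with $c=c(a,b)$ small, so that each regularity error is at most $\Omega(1)/r$ relative to the densities, which are at least $1/r$.
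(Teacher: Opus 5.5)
\textbf{The colour choice at each step is not justified.} In the toy, (\ref{c23}) comes for free: $S_{i,j}\ge\mu_{V_i}(a\cdot A_j)=\mu_{V_i}(b\cdot A_j)\ge 1/r$. This works because a single subspace $V_i$ is both where $A$ lives and where $D$ lives, and it is invariant under $x\mapsto ab^{-1}x$. With Bohr sets these two roles separate. $D=b\cdot A_j$ must be dense near $0$ in a \emph{narrow} Bohr set: your $b\cdot B_i'$, or in the paper the support of the positive-definite measure $\wt{\mu_{B_1}}\ast\wt{\mu_{B_2}}\ast\mu_{B_2}\ast\mu_{B_1}$. So pigeonholing only tells you that $a\cdot A_j$ is dense on the narrow set $a\cdot B_i'$. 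That says nothing about $\|1_{a\cdot A_j}\ast\mu_{B_i}\|_\infty$, which can be exponentially small in the rank (take $A_j$ to contain a narrow Bohr set and little else). Without $S_{i,j}\ge 1/r$ for the chosen $j$, you lose both the $O(\log 2r)$ bound on increments per colour and the lower bound $\alpha\ge 1/r$ in case (\ref{c1}). Refining $B_i$, $a\cdot B_i$, $b\cdot B_i$ to a common Bohr set does not touch this: the obstruction is narrow versus wide scale, and it would persist even for a dilation-invariant Bohr set. The paper fixes this with an extra kind of step, case (\ref{cd0}). If $S_{i,j}<1/2r$, it replaces the current Bohr set by the narrow Bohr set $ab^{-1}\cdot B_2$, on which $a\cdot A_j$ has hereditary density at least $1/2r$ by Lemma \ref{lem.large}. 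Because $S_{i,j}$ is defined as a minimum over all probability measures supported on the current Bohr set, it is exactly monotone, so this step happens at most once per colour. Your approximate monotonicity would also support such a step, but the step has to be there.

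\textbf{Your rank accounting for the refinement is also wrong.} Intersecting with the dilates can triple the rank rather than add $O_{a,b}(1)$. Since the refined frequency set is carried forward, after $i$ steps you carry the $a^{-s}b^{-t}$-twists, $s+t\le i$, of every earlier frequency. The rank can then be of order $mi^3$ rather than $mi$, and with $i\asymp r\log 2r$ this gives only $\exp(-r^{4+o(1)})$. The paper never intersects. It uses that $b\cdot B(\Gamma,\rho)$ is the Bohr set of the same width whose frequency set is the $b^{-1}$-twist of $\Gamma$, so it has the same rank. It also uses $b\cdot B(\Gamma,\rho)\subset B(\Gamma,|b|\rho)$ to see that this twisted set is narrow relative to $B(\Gamma,\cdot)$. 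The narrow sets $B_1,\dots,B_5$ all use the twisted set $\Gamma_i'$, and the next frequency set is a twist of $\Gamma_i$ together with at most $m$ new frequencies. So the frequency sets are not nested, but the rank grows additively. Your claimed per-step width loss $\Omega(\alpha\delta/d)^{O(1)}$ is also optimistic: the Kelley--Meka machinery as used here loses $\alpha^{O(\log 2\delta^{-1})}/m$, i.e.\ $\exp(-O(\log^2 2r))$, though this does not change the final shape of the bound.
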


\begin{proof}[Proof of Theorem \ref{thm.mn} assuming Theorem \ref{thm.keycount}]
Let $f$ be the function in Theorem \ref{thm.keycount}.  By Rado's Single Equation Theorem \cite[Theorem 9.5, p255]{lanrob::} there is $I \subset [d]$ such that $a_j \neq 0$ for some $j \in I$ and $\sum_{i\in I}{a_i}=0$. Let $b:=-\sum_{i \not \in I}{a_i}$, which we may assume is nonzero since otherwise $a$ is invariant and $\mathfrak{R}_a(r)=1$ for all $r$, and finally let $N$ be such that $f(a_j,b,2r+1)N^2>1$, so that $N=\exp(O_{a}(r^2\log^{O(1)}2r))$.

Given an $r$-colouring $\mathcal{C}$ of $\{1,\dots,N\}$, $\mathcal{C}':=\{A,-A : A \in \mathcal{C}\}\cup \{\{0\}\}$ is a $(2r+1)$-colouring of $\{-N,\dots,N\}$. By Theorem \ref{thm.keycount} there is a colour class $A' \in \mathcal{C}'$ with $f(a_j,b,2r+1)N^2$ triples $(y,z,w) \in A^3$ such that $a_jy-a_jz=bw$. Since $f(a_j,b,2r+1)N^2>1$ we see that $A' \neq \{0\}$, and hence $A' \in \mathcal{C}$ or $-A' \in \mathcal{C}$. In the former case let $x \in [N]^d$ be defined by $x_j= y$; $x_k= z$ for all $k \in I \setminus \{j\}$; and $x_k= w$ for all $k \not \in I$. Then $x \in (A')^d$ and
\begin{equation*}
a_1x_1+\cdots + a_dx_d = a_jy + \left(\sum_{i \in I\setminus \{j\}}{a_i}\right)z + \left(\sum_{i \not \in I}{a_i}\right)w = a_jy-a_jz-bw=0.
\end{equation*}
The case for $-A' \in \mathcal{C}$ is similar, we just replace $y$, $z$ and $w$ by $-y$, $-z$ and $-w$. In particular, in either case we have some $A \in \mathcal{C}$ and $x \in A^d$ such that $a_1x_1+\cdots + a_dx_d=0$ and hence $\mathfrak{R}_a(r) \leq N$. The result is proved.
\end{proof}

\subsection{Notation} First we set some notation extending the notation from \S\ref{sec.model}. We shall work in a finite Abelian group $G$, and we only consider measures on $G$ defined on all subsets of $G$. For $\emptyset \neq A\subset G$ we write $\mu_A$ for the uniform probability measure supported on $A$. If $\mu$ is a measure and $f \in L_\infty(\mu_G)$ then $f\dd \mu$ denotes the measure assigning mass $\int{f1_E\dd\mu}$ to the set $E \subset G$. 

For functions $f$ and $g$ and a measure $\mu$ on $G$ we define convolution and the inner product as in (\ref{eqn.earlier}) and (\ref{eqn.inner}). If $\nu$ is a further measure on $G$ then we define $\nu \ast \mu$ to be the measure given by
\begin{equation*}
\nu\ast \mu(E):=\int{1_E(x+y)\dd\nu(x)\dd\mu(y)} \text{ for all }E \subset G.
\end{equation*}
For $\mu$ a measure on $G$ we write $\langle f,\mu\rangle:=\int{f(x)\dd\mu(x)}$, and $\wt{\mu}(E):=\overline{\mu(-E)}$ for all $E \subset G$.

We write $\wh{G}$ for the dual group of $G$, that is the set of homomorphisms $G \rightarrow S^1$, and define the Fourier and Fourier-Stieltjes transform of a function $f$ and a measure $\mu$ respectively by
\begin{equation*}
\wh{f}(\gamma):=\int{f(x)\overline{\gamma(x)}\dd\mu_G(x)} \text{ and } \wh{\mu}(\gamma):=\int{\overline{\gamma(x)}\dd\mu(x)}.
\end{equation*}

The labels $C_1,C_2,\dots \geq 1$ and $c_1,c_2,\dots \in (0,1]$ denote absolute constants which remain the same in all instances. The $C_i$s are different to the $C_a$s defined in \S\ref{sec.intro}.

For $\Gamma$ a set of characters on $G$ and $\delta \in (0,1]$, the Bohr set with frequency set $\Gamma$ and width $\delta$ is
\begin{equation*}
B(\Gamma,\delta):=\{x \in G: |\gamma(x)-1| \leq \delta \text{ for all }\gamma \in \Gamma\}.
\end{equation*}
There are two standard ways to define Bohr sets. The other way is \cite[\S4.4, p187]{taovu::}, and for most purposes they are equivalent via \cite[(4.25), p187]{taovu::}. 

\subsection{The main iteration lemma}\label{sec.mil} We shall prove Theorem \ref{thm.keycount} by copying the proof of Theorem \ref{thm.model}. We replace subspaces with Bohr sets, and codimension with the size of the frequency set, and pay an additional cost in the width parameter. 

It can be helpful to think of Bohr sets as somewhat like convex bodies.  If $B_0$ is a convex body in $\R^d$ and $B_1$ is that same body dilated by a factor $\delta$, then $\mu_{\R^d}(B_1+B_0)=(1+O(d\delta))\mu_{\R^d}(B_0)\approx \mu_{\R^d}(B_0)$ (here $\mu_{\R^d}$ is Lebesgue measure on $\R^d$). In Lemma \ref{lem.regular} we shall see that many Bohr sets share this property.

In general for any sets $B_0,B_1 \subset G$ having $\mu_G(B_1+B_0) \approx \mu_G(B_0)$ we have $x+B_0 \approx B_0$ for all $x \in B_1-B_1$. We think of this as saying that $B_1-B_1$ acts approximately on $B_0$, and if this approximation is good enough then we can transfer arguments that work for genuine actions to this approximate setting. 

To copy the proof of Theorem \ref{thm.model} we need a generalisation of Proposition \ref{prop.sum}. This is Lemma \ref{lem.itlem} below, but it is complicated enough that we cannot make do with just one pair $B_0$ and $B_1$ with $\mu_G(B_1+B_0)\approx \mu_G(B_0)$, and instead will need a whole chain of sets $B_0,B_1,\dots$.

We can recover Proposition \ref{prop.sum} in the case $\epsilon=1/2$ --  the case we actually used -- from Lemma \ref{lem.itlem} by setting $G=\F_q^n$,  and $B_0=B_1=B_2=B_3=B_4=B_5=V \leq G$ and $k$, $m$, and $l$ minimal satisfying the hypotheses. This follows because a Bohr set contains the annihilator of its frequency set which, if $G=\F_q^n$, is a subspace of codimension at most the size of that frequency set -- in this case at most $m$.

\begin{lemma}[Iteration lemma]\label{lem.itlem} Suppose that $\alpha,\delta \in (0,1]$,  $k$, $m$, and $l$ are naturals with $k \geq \newconstbig{spec}\log 2\delta^{-1}$, $\tikzmark{b1} l\geq   \newconstbig{spec3}k \log 2\alpha^{-1}$, and $ \tikzmark{b2} m \geq \newconstbig{spec2}l^2k^2\log^22\alpha^{-1}$,
\begin{tikzpicture}[overlay, remember picture, >=stealth]

\node[font=\tiny, left=3cm of pic cs:b1, yshift=-0.5\baselineskip, align=left, red] (explain)
{ parameters for\\  how many times \\ we `need to add'\\ approximate\\ actions};

\draw[->, red,rounded corners=5pt]
  ([xshift=0.5cm]explain.east) -| (pic cs:b1);

\draw[->, red, rounded corners=5pt]
  ([xshift=0.5cm]explain.east) -| (pic cs:b2);
\end{tikzpicture}
\[
\begin{array}{l l}
\tikzmark{start}  \mathord{\bullet}\,\mu_G(-B_1-B_2+{B_0}) \leq \left(1+\newconstlittle{me}\alpha\right)\mu_G({B_0}) & \bullet \mu_G(B_2+B_1+{B_0}) \leq \left(1+\newconstlittle{16}\alpha^2\right)\mu_G({B_0})\\[5pt]
\mathord{\bullet}\,\mu_G(B_1-B_2+l(B_3-B_3))\leq 2\mu_G(B_1) & \bullet \mu_G(B_1+B_3) \leq 2\mu_G(B_1)\\[5pt]
\mathord{\bullet}\,\mu_G(B_2+l(B_3-B_3))\leq 2\mu_G(B_2) &\bullet \mu_G(B_3+mB_4) \leq 2\mu_G(B_3)\\[5pt]
\tikzmark{end}\mathord{\bullet}\,\mu_G(B_5+B_4)\leq \left(1+\newconstlittle{96}\alpha^{4k}\right)\mu_G(B_4) & 
\end{array}
\]
\begin{tikzpicture}[overlay, remember picture]
\draw[decorate, red, decoration={brace,amplitude=10pt}]
  ([xshift=\textwidth-0.5cm, yshift=\baselineskip]pic cs:start) -- ([xshift=\textwidth-0.5cm ]pic cs:end)
  node[midway, right=10pt, align=left, text width=3cm] 
{\tiny conditions \\ controlling the\\ error in our\\ approximate\\ actions\\};
\end{tikzpicture}
\begin{tikzpicture}[overlay, remember picture, >=stealth]

\node[font=\tiny, left=5.8cm of pic cs:b3, yshift=\baselineskip, align=left, red] (explain2)
{defined s.t.\ \\  $B_0 +\supp \mu \approx B_0$,\\ and $\wh{\mu}\geq 0$ for\\ spectral positivity;\\ see Appendix \ref{ap.specpos}};

\draw[->, red, rounded corners=5pt]
  (explain2.east) -| ([xshift=4pt, yshift=6pt]pic cs:b3);
\end{tikzpicture}
the set $A$ has $\mu_{B_0}(A)=\alpha$, and \tikzmark{b3} $  \tikzmark{b3} \mu :=   \wt{\mu_{B_1}}\ast \wt{\mu_{B_2}} \ast \mu_{B_2} \ast \mu_{B_1}$ has $\mu(D)\geq \delta$. Then, either
\begin{equation*}\setcounter{enumi}{2}
\langle 1_A \ast 1_{-A}, 1_{D}\rangle_{L_2(\mu)} \geq \frac{1}{2}\delta\alpha^2\mu_G({B_0});
\end{equation*}
or there is a Bohr set $B_6$ with frequency set of size at most $m$ and width $\newconstlittle{128}\frac{\alpha^{4k}}{m}$ such that
\begin{equation*}
\|1_{A \cap B_0} \ast \nu\|_\infty \geq (1+\newconstlittle{32})\alpha \text{ for all probability measures }\nu\text{ supported on }B_6 \cap (B_5 - B_5).\tikzmark{b4} 
\end{equation*}
\begin{tikzpicture}[overlay, remember picture, >=stealth]

\node[font=\tiny, right=0.5cm of pic cs:b4, align=left, red] (explain3)
{`hereditary'\\ density increment\\ with better\\ monotonicity;\\ c.f.\ Lemma \ref{lem.large}};

\draw[->, red]
  ([xshift=0cm, yshift=5pt]explain3.west) -- ([yshift=5pt]pic cs:b4);
\end{tikzpicture}
\end{lemma}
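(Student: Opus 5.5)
We run the Kelley--Meka / Bloom--Sisask argument inside the Bohr set chain: first H\"older lifting, then dependent random choice, then almost-periodicity, then spectral positivity. Write $f:=1_{A\cap B_0}-\alpha 1_{B_0}$ for the balanced function. Suppose the first alternative fails, so $\langle 1_A\ast 1_{-A},1_D\rangle_{L_2(\mu)}<\frac12\delta\alpha^2\mu_G(B_0)$.

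\textbf{Step 1: an $L^{2k}$ deviation.} Expand $1_{A\cap B_0}=f+\alpha 1_{B_0}$ in both convolution factors. The hypotheses $\mu_G(-B_1-B_2+B_0)\le(1+c_1\alpha)\mu_G(B_0)$ and $\mu_G(B_2+B_1+B_0)\le(1+c_2\alpha^2)\mu_G(B_0)$ say that $\supp\mu$ acts approximately on $B_0$. Hence the main term $\alpha^2 1_{B_0}\ast 1_{-B_0}$ integrates against $1_D\,\mathrm{d}\mu$ to within $O(c\,\alpha^2)\mu_G(B_0)\mu(D)$ of $\alpha^2\delta\mu_G(B_0)$, and the cross terms are similarly small. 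The failure of the first alternative therefore gives $|\langle f\ast\tilde f,1_D\rangle_{L_2(\mu)}|\gg\delta\alpha^2\mu_G(B_0)$.

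Since $\mu(D)\ge\delta$, H\"older with $k\ge C_1\log2\delta^{-1}$ upgrades this to $\|f\ast\tilde f\|_{L_{2k}(\mu)}\gg\alpha^2\mu_G(B_0)$. We then need spectral positivity. Because $\hat\mu=|\hat\mu_{B_1}|^2|\hat\mu_{B_2}|^2\ge0$, the argument of Appendix~\ref{ap.specpos} (Kelley--Meka's sifting of odd moments) converts this into a one-sided statement: $\|1_{A\cap B_0}\ast 1_{-(A\cap B_0)}\|_{L_{k'}(\mu)}\ge(1+c)\alpha^2\mu_G(B_0)$ for some $k'=O(k)$. This is where the $\wt{\mu_{B_1}}\ast\wt{\mu_{B_2}}\ast\mu_{B_2}\ast\mu_{B_1}$ structure is used.

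\textbf{Step 2: dependent random choice.} Following \cite[Lemma 1.5]{kelmek::0} and its Bohr version in \cite{Bloom:2023aa}, pick $s_1,\dots,s_{k'}$ at random from $\mu$ and set $A_1=\bigcap_i(A+s_i)$. The large moment produces sets $A_1,A_2\subset A$, both living on translates of $B_1-B_2$-type sets, with densities at least $\alpha^{O(k)}$ and $\langle 1_{A_1}\ast 1_{A_2},1_{A}\rangle\ge(1+c)\alpha\,\mu(A_1)\mu(A_2)$ suitably normalised. The hypotheses $\mu_G(B_1+B_3)\le2\mu_G(B_1)$ and $\mu_G(B_2+l(B_3-B_3))\le2\mu_G(B_2)$ keep these sets of positive relative density in $B_1$ and $B_2$, and ensure that they are stable under $l$-fold shifts by $B_3-B_3$.

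\textbf{Step 3: almost-periodicity.} Apply Croot--Sisask almost-periodicity relative to $B_3$, with $l\ge C_2k\log2\alpha^{-1}$ repetitions, to $1_{A}\ast\mu_{A_1}$ in $L_p$ for $p\approx\log2\alpha^{-1}$. This gives a set $X\subset B_3$ of density at least $\exp(-O(lk\log\alpha^{-1}))$ in $B_3$ such that $1_A\ast\mu_{A_1}\ast\mu_{A_2}$ is changed by at most $c\alpha$ under convolution with $\mu_X^{(l)}$. Chang's lemma, together with $\mu_G(B_3+mB_4)\le2\mu_G(B_3)$ and $m\ge C_3l^2k^2\log^22\alpha^{-1}$, then puts a Bohr set $B_6$ of rank at most $m$ and width $c\alpha^{4k}/m$ inside the large spectrum of $\mu_X$. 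Consequently $\|1_{A\cap B_0}\ast\nu\|_\infty\ge(1+c)\alpha$ holds for any probability measure $\nu$ on $B_6$.

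\textbf{Step 4: the hereditary form.} To obtain the conclusion for every probability measure $\nu$ supported on $B_6\cap(B_5-B_5)$, we need the relevant characters to be close to $1$ on the whole of $B_6$, not merely on average. The condition $\mu_G(B_5+B_4)\le(1+c_3\alpha^{4k})\mu_G(B_4)$ provides exactly this: smoothing $\mu_X$ by $\mu_{B_4}$ costs at most $O(\alpha^{4k})$, which is below the $\alpha^{O(k)}$ densities in play. Hence translating by any element of $B_5-B_5$ preserves the large correlation pointwise, and averaging over $\nu$ preserves it too.

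The main obstacle I expect is Step 1's spectral-positivity passage combined with bookkeeping of the approximate-action errors. Every error term must be $O(\alpha)$-relative, or $\alpha^{4k}$-relative after the $2k$-th moment, and each error must be paired with the correct dilation condition in the list. I would first check this with the model case in which all the $B_i$ equal a single subspace $V$, where every error vanishes.
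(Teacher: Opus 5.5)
Your architecture is the paper's: spectral positivity, then sifting, then $l$-fold Croot--Sisask almost periodicity, then a local Chang lemma, then averaging. Step~1 is essentially right. The paper merges H\"older and spectral positivity so as to stay at exponent $2k$, which is what makes the sifted densities exactly $\alpha^{4k}$. With your $k'=O(k)$ you would have to rescale $k$ by a constant, which the free constant in $k\geq C\log 2\delta^{-1}$ allows.

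Steps~2 and~3, however, are misspecified in ways that fail as written. In the sifting step the shifts must be uniform in $B_0$, not drawn from $\mu$. With $x_1,\dots,x_{2k}$ uniform in $B_0$ and $A'=\bigcap_i(x_i-A)$, one has $\P(x,y\in x_i-A)=\mu_{B_0}((x+A)\cap(y+A))$. For $x,y\in B_2+B_1$ this equals $1_A\ast 1_{-A}(y-x)/\mu_G(B_0)$ up to an additive $\frac{1}{16}\alpha^2$; this is where $\mu_G(B_2+B_1+B_0)\leq(1+\frac{1}{16}\alpha^2)\mu_G(B_0)$ is used. The measure $\mu$ enters only as the law of $y-x$ for independent $x,y\sim\mu_{B_2}\ast\mu_{B_1}$. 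With shifts drawn from $\mu$, the moment computation never produces $\|1_A\ast 1_{-A}\|_{L_{2k}(\mu)}$. Nor is the output a pair of subsets of $A$. It is a pair of sets $T\subset B_2+z$ and $S\subset -B_1-w$, each of relative density at least $\alpha^{4k}$, with $\langle 1_D,\mu_T\ast\mu_S\rangle\geq 1-\frac{1}{32}$, where $D$ is the set of popular differences of $A\cap B_0$.

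In Step~3, $p\approx\log 2\alpha^{-1}$ is too small. After almost periodicity one integrates against $1_T$, of relative density $\tau=\alpha^{4k}$, and H\"older costs a factor $\tau^{-1/p}$. So $p$ must be of order $k\log 2\alpha^{-1}$. Moreover, each almost period needs accuracy $O(1/l)$ to survive the telescoping over $x_1+\dots+x_l$. Croot--Sisask then gives only $\mu_{B_3-t}(X)\geq\exp(-O(l^2k^2\log^2 2\alpha^{-1}))$, not $\exp(-O(lk\log 2\alpha^{-1}))$. This is exactly what the hypothesis on $m$ is calibrated for.

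The most serious gap is in Steps~3--4. The assertion that the increment holds for every probability measure $\nu$ on $B_6$ is false, already in your model case $B_0=\dots=B_5=V$. By Lemma~\ref{lem.bohrsize}, $\mu_G(B_6)\geq(c\alpha^{4k}/2m)^{O(m)}$ independently of $V$. So if $\mu_G(V)<\alpha\mu_G(B_6)$, then $\nu=\mu_{B_6}$ gives $\|1_{A\cap V}\ast\nu\|_\infty\leq\mu_G(V)/\mu_G(B_6)<\alpha$.

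The restriction to $B_6\cap(B_5-B_5)$ is intrinsic to the local Chang lemma, and the $B_5$ hypothesis is not a smoothing of $\mu_X$ by $\mu_{B_4}$. Dissociativity is measured against $\mu_{B_3+mB_4}\ast\mu_{-B_4}\ast\cdots\ast\mu_{-B_4}$ ($m$ copies); this is where $\mu_G(B_3+mB_4)\leq 2\mu_G(B_3)$ enters. Take a character $\gamma$ with $|\wh{\mu_X}(\gamma)|\geq\frac12$ that cannot be added to the greedy dissociated set $\Lambda$, which is the frequency set of $B_6$. Such a $\gamma$ only satisfies $|\wh{\mu_{B_4}}(\gamma-\lambda)|\geq\frac16$ for some $\lambda\in\Span(\Lambda)$. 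By Lemma~\ref{lem.inv} and $\mu_G(B_5+B_4)\leq(1+c\alpha^{4k})\mu_G(B_4)$, such $\gamma-\lambda$ are close to $1$ only on $B_5-B_5$.

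The required size of that error comes from the Plancherel comparison in Proposition~\ref{prop.d}. There $\sum_\gamma|\wh{1_D}(\gamma)\wh{1_T}(\gamma)\wh{\mu_S}(\gamma)|\leq\mu_G(T)\sqrt{\mu_G(D)/\mu_G(S)}=O(\sigma^{-1/2}\mu_G(T))$ with $\sigma=\alpha^{4k}$. Hence on $\Delta=\{\gamma:|\wh{\mu_X}(\gamma)|\geq\frac12\}$ the characters must be within a small multiple of $\sigma^{1/2}$ of $1$ on $\supp\nu$. Off $\Delta$, the factor $|\wh{\mu_X}|^l\leq 2^{-l}$ must beat $\sigma^{-1/2}$, which is the real reason for $l\geq Ck\log 2\alpha^{-1}$. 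Only after this does one pass from $\|1_D\ast\nu\|_\infty\geq 1-\frac{1}{16}$ to the increment, via $\|1_{A\cap B_0}\ast 1_{-(A\cap B_0)}\ast\nu\|_\infty\leq\alpha\mu_G(B_0)\|1_{A\cap B_0}\ast\nu\|_\infty$.
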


The proof we give here is a combination of spectral positivity (Appendix \ref{ap.specpos}), sifting (Appendix \ref{ap.sift}), and Proposition \ref{prop.d}, the proof of which appears immediately after this. These are the same arguments as in the work of Kelley and Meka \cite{kelmek::0}, as adapted to Bohr sets by Bloom and Sisask \cite{Bloom:2023aa}. It is more or less a combination of \cite[Propositions 15, 18, \& 20]{Bloom:2023aa} with some minor cosmetic and technical differences.
\begin{proof}
Let $\refconstbig{spec}>1$ be such that if $k \geq \refconstbig{spec}\log 2\delta^{-1}$ then $\left(1+\frac{1}{2}\right)(\delta/2)^{1/2k} \geq 1+3/8$ for all $\delta \in (0,1]$ and $k \geq 4\refconstbig{rdc}\log 2\cdot 32$. Let $\refconstbig{spec3}>1$ be such that if $l \geq \refconstbig{spec3}k \log 2\alpha^{-1}$, then $l \geq \refconstbig{L}\log  (2 \cdot 32 \cdot \alpha^{-4k})$ for all $\alpha \in (0,1]$ and $k \in \N^*$.  Let $\refconstbig{spec2}>1$ be such that if $m \geq \refconstbig{spec2}l^2k^2\log^22\alpha^{-1}$ then $m \geq \refconstbig{pd}\cdot (1/32)^{-2}l^2\cdot \log^4 2\alpha^{-4k}$ for all $k,l\in \N^*$, $\alpha \in (0,1]$. Let $\refconstlittle{me}=\frac{1}{8\refconstbig{specpos}}$, $\refconstlittle{16}=\frac{1}{16}$, $\refconstlittle{96}=\frac{1}{32}\refconstlittle{962}$, $\refconstlittle{128}=\frac{1}{32}\refconstlittle{8}$, and $\refconstlittle{32}=\frac{1}{32}$.

Apply spectral positivity -- Lemma \ref{lem.specpos} with the Lemma's $k$ equal to $k$; the Lemma's $\epsilon$ equal to $\frac{1}{2}$; the Lemma's $\eta$ equal to $\frac{\alpha}{8\refconstbig{specpos}}$; the Lemma's $\alpha$ equal to $\alpha$; the Lemma's $\delta$ equal to $\delta$; the Lemma's ${B_0}$ equal to ${B_0}$, the Lemma's $B_1$ equal to $-B_1-B_2$; the Lemma's $A$ equal to $A$; the Lemma's $\mu$ equal to $\wt{\mu_{B_1}} \ast \wt{\mu_{B_2}} \ast \mu_{B_2} \ast \mu_{B_1}$; and the Lemma's $D$ equal to $D$. Either we are done because
\begin{equation*}
\langle 1_A \ast 1_{-A}, 1_{D}\rangle_{L_2(\mu)} \geq \frac{1}{2}\delta\alpha^2\mu_G({B_0});
\end{equation*}
or else 
\begin{equation*}
\|1_{A \cap {B_0}} \ast 1_{-(A\cap {B_0})}\|_{L_{2k}(\mu)} \geq \left(1+\frac{1}{4}\right)\alpha^2\mu_G({B_0}).
\end{equation*}
Apply sifting -- Lemma \ref{lem.drc} with the Lemma's $\alpha$ equal to $\alpha$; the Lemma's $\epsilon$ equal to $\frac{1}{4}$; the Lemma's $\kappa$ equal to $\frac{1}{32}$; the Lemma's $k$ equal to $k$; the Lemma's ${B_0}$ equal to ${B_0}$; the Lemma's $B_1$ equal to $B_1$; and the Lemma's $B_2$ equal to $B_2$. This gives us sets $S$ and $T$ and elements $z$ and $w$ with
\begin{equation*}
\mu_{B_2+z}(T) ,\mu_{-B_1-w}(S) \geq \alpha^{4k} \text{ and }\langle 1_D,\mu_{T} \ast \mu_{S}\rangle \geq 1-\frac{1}{32}
\end{equation*}
where
\begin{equation*}
D=\left\{u \in -B_1-B_2+B_2+B_1: 1_{A\cap {B_0}} \ast 1_{-(A\cap {B_0})}(u)>\left(1+\frac{1}{8}\right)\alpha^2\mu_G({B_0})\right\}.
\end{equation*}
Apply Proposition \ref{prop.d} (telling us that sumsets correlate with Bohr sets of low rank) with the Proposition's $\epsilon$ equal to $\frac{1}{32}$; the Proposition's $\sigma$ and $\tau$ both equal to $\alpha^{4k}$; the Proposition's $l$ equal to $l$, which satisfies the required inequality; the Proposition's $m$ equal to $m$, which satisfies the required inequality; the Proposition's ${B_0}$ equal to $w+B_1$; the Proposition's $B_1$ equal to $B_2+z$; the Proposition's $B_2$ equal to $B_3$; the Proposition's $B_3$ equal to $B_4$; the Proposition's $B_4$ equal to $B_5$; the Proposition's $S$ equal to $S$; the Proposition's $T$ equal to $T$; the Proposition's $D$ equal to $D$. Then there is a Bohr set $B_6$ with frequency set of size at most $m$ and width $\refconstlittle{8}\frac{\alpha^{4k}}{32m}$ such that for any probability measure $\nu$ supported on $B_6\cap (B_5-B_5)$ we have
\begin{equation*}
\|1_D \ast \nu\|_\infty \geq \langle 1_D,\mu_{T} \ast \mu_{S}\rangle \geq 1-\frac{1}{32}\geq 1- \frac{1}{16}.
\end{equation*}
It follows that
\begin{equation*}
\|1_{A\cap {B_0}} \ast 1_{-(A\cap {B_0})} \ast \nu\|_\infty \geq \left(1 - \frac{1}{16}\right)\left(1+\frac{1}{8}\right)\alpha^2\mu_G({B_0}) \geq \left(1+\frac{1}{32}\right)\alpha^2\mu_G({B_0}), 
\end{equation*}
and we get the result by averaging.
\end{proof}

\begin{proposition}\label{prop.d}
Suppose that $\epsilon,\sigma,\tau  \in (0,1]$,  we have natural numbers $l \geq \newconstbig{L}\log  2\sigma^{-1}\epsilon^{-1}$ and $m \geq \newconstbig{pd}\epsilon^{-2}l^2(\log 2\tau^{-1})(\log 2 \sigma^{-1})$,
\[
\begin{array}{l l}
\mathord{\bullet}\,\mu_G({B_0}-B_1+l(B_2-B_2)) \leq 2\mu_G({B_0}) & \bullet \mu_G({B_0}+B_2)\leq 2\mu_G({B_0})\\[5pt]
\mathord{\bullet}\,\mu_G(B_1+l(B_2-B_2)) \leq 2\mu_G(B_1) & \bullet \mu_G(B_2+mB_3) \leq 2\mu_G(B_2)\\[5pt]
\mathord{\bullet}\,\mu_G(B_4+B_3)\leq (1+\newconstlittle{962}\epsilon\sigma)\mu_G(B_3) &
\end{array}
\]
 and $-S \subset {B_0}$ and $T \subset B_1$ have $\mu_{{B_0}}(-S)\geq \sigma$ and $\mu_{B_1}(T) \geq \tau$, and $D \subset G$.  Then there is a Bohr set $B_5$ with frequency set of size at most $m$ and width $\newconstlittle{8}\frac{\epsilon\sigma}{m}$ such that for any probability measure $\mu$ supported on $B_5\cap (B_4-B_4)$ we have 
\begin{equation*}
\|1_D \ast \mu\|_\infty \geq \langle 1_D,\mu_T\ast \mu_S\rangle -\epsilon.
\end{equation*}
\end{proposition}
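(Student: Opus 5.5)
This is the Bohr-set version of the almost-periodicity (Croot--Sisask) step in Kelley--Meka, as adapted by Bloom and Sisask. The plan is to find a large set of "almost periods" of $\mu_T\ast\mu_S$ measured against $1_D$, inside $B_0$-type sets. We then show that this set contains a whole low-rank Bohr set. With that in hand, averaging over $\mu$ gives the conclusion.

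\emph{Step 1: almost-periodicity.} Write $\langle 1_D,\mu_T\ast\mu_S\rangle=\langle 1_D\ast\mu_{-S},\mu_T\rangle$, with sign conventions fixed so that $1_D\ast\mu_{-S}$ is the relevant function.
\begin{itemize}
\item Sample $x_1,\dots,x_p$ independently from $\mu_{-S}$, with $p\approx \epsilon^{-2}\log 2\tau^{-1}$.
\item By Marcinkiewicz--Zygmund (or a Chernoff bound in $L_p(\mu_T)$), the empirical average $\frac1p\sum_i 1_D(\cdot-x_i)$ approximates $1_D\ast\mu_{-S}$ to within $\epsilon/4$ in $L_p(\mu_T)$, with positive probability.
\item The usual Croot--Sisask translation trick then applies. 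The set $\mu_{B_0}(-S)\geq\sigma$ combined with $\mu_G(B_0+B_2)\leq 2\mu_G(B_0)$ gives many pairs of samples differing by a common translate in $B_2-B_2$.
\item This yields a set $X\subset B_2$ with $\mu_{B_2}(X)\geq (\sigma/2)^{O(p)}$ such that
\[\|1_D\ast\mu_{-S}(\cdot+t)-1_D\ast\mu_{-S}\|_{L_p(\mu_T)}\leq \epsilon/4\quad\text{for all } t\in X-X.\]
\end{itemize}
Iterating $l$ times, using the hypotheses $\mu_G(B_0-B_1+l(B_2-B_2))\leq 2\mu_G(B_0)$ and $\mu_G(B_1+l(B_2-B_2))\leq 2\mu_G(B_1)$ to keep translated supports comparable, the same bound holds with error $\epsilon/4$ for $t\in l(X-X)$. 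Here we pass to the $L_\infty$-type average against $\mu_T$ via the triangle inequality with step error $\epsilon/(4l)$. This costs $p\approx \epsilon^{-2}l^2\log 2\tau^{-1}$.

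\emph{Step 2: a Bohr set inside $l(X-X)$.} Since $\mu_{B_2}(X)\geq \exp(-O(p\log 2\sigma^{-1}))$, we apply a Chang/Bogolyubov-type argument relative to $B_2$. Using $\mu_G(B_2+mB_3)\leq 2\mu_G(B_2)$ and $l\geq C\log 2\sigma^{-1}\epsilon^{-1}$, we find a frequency set $\Gamma$ with $|\Gamma|\leq m$, of size $O(p\log 2\sigma^{-1})$, which matches the hypothesis on $m$. The Bohr set $B_5:=B(\Gamma,c\epsilon\sigma/m)$, intersected with $B_4-B_4$, should then be contained (up to $\epsilon$-measure error) in the set of almost-periods.

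The mechanism is the following. The large spectrum of $1_X\ast\mu_{B_3}$ is captured by $\Gamma$, so for $t\in B_5$ the function $\mu_X\ast\mu_{-X}\ast\cdots$ is nearly $t$-invariant. The hypothesis $\mu_G(B_4+B_3)\leq(1+c\epsilon\sigma)\mu_G(B_3)$ makes $B_4-B_4$ act on $B_3$ with error $O(\epsilon\sigma)$, so translates by $t\in B_5\cap(B_4-B_4)$ cost only $\epsilon/4$. The width $\epsilon\sigma/m$ is chosen so that $|\gamma(t)-1|$ summed over $m$ steps stays below $\epsilon\sigma$.

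\emph{Step 3: conclusion.} For $t$ in the good set we get $\langle 1_D(\cdot+t),\mu_T\ast\mu_S\rangle\geq\langle 1_D,\mu_T\ast\mu_S\rangle-\epsilon$. Averaging over $t\sim\mu$ gives
\[\langle 1_D\ast\mu',\mu_T\ast\mu_S\rangle\geq\langle 1_D,\mu_T\ast\mu_S\rangle-\epsilon,\]
where $\mu'$ is a reflection of $\mu$. Bounding the left-hand side by $\|1_D\ast\mu\|_\infty$ finishes the proof. Reflection is harmless since $B_5\cap(B_4-B_4)$ is symmetric.

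The main obstacle is Step 2. We must guarantee containment for \emph{every} probability measure on $B_5\cap(B_4-B_4)$, which is a pointwise statement and not merely an averaged one. This is why the extra set $B_4$ and the regularity condition with error $\epsilon\sigma$ appear. I would first try to run Bloom--Sisask's Proposition 18 verbatim, replacing their single regular Bohr set with the chain $B_2,B_3,B_4$, and track the constants.
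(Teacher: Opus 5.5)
Your Step 1 matches the paper: local Croot--Sisask with per-step error $\epsilon/4l$, telescoped over $l$ steps using the two $l(B_2-B_2)$ hypotheses. Steps 2--3, however, have a genuine gap, and it is exactly the one you flag and leave open.

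As written, Step 3 needs $\langle 1_D(\cdot+t),\mu_T\ast\mu_S\rangle\geq\langle 1_D,\mu_T\ast\mu_S\rangle-\epsilon$ for \emph{every} $t\in B_5\cap(B_4-B_4)$; take $\mu=\delta_t$ to see this. In other words, the whole Bohr set would have to consist of almost-periods of the \emph{unsmoothed} convolution. Containment ``up to $\epsilon$-measure error'' therefore does not help. Genuine containment of a rank-$m$ Bohr set in $l(X-X)$ is not something these parameters deliver either. A Fourier proof of such a containment must make the off-spectrum part $\sum_{\gamma\notin\Delta}|\widehat{\mu_X}(\gamma)|^{2l}$ small compared with the main term. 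Parseval only bounds it by $2^{2-2l}\mu_G(X)^{-1}$, which roughly forces $l\gtrsim\log\mu_{B_2-t}(X)^{-1}$, and this may be as large as $\epsilon^{-2}l^2\log(2\tau^{-1})\log(2\sigma^{-1})\gg l$. Enlarging $l$ shrinks $X$, so the requirement is circular.

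The missing idea is never to translate $\mu_T\ast\mu_S$ itself. Your sentence about near-invariance of $\mu_X\ast\mu_{-X}\ast\cdots$ is the right instinct. However, that near-invariance only holds in a weak sense, tested against $1_D$, and it must be applied to a smoothed measure. Since $\|1_D\ast\mu\|_\infty\geq\langle 1_D\ast\mu,\nu\rangle$ for any probability measure $\nu$, take $\nu:=\mu_T\ast\mu_X^{(l)}\ast\mu_S$, where $\mu_X^{(l)}$ is the $l$-fold convolution of $\mu_X$. Two estimates then suffice.
\begin{enumerate}
\item[(i)] $|\langle 1_D,\nu\rangle-\langle 1_D,\mu_T\ast\mu_S\rangle|\leq\epsilon/2$. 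To get this, average your Step 1 bound over $(x_1,\dots,x_l)\in X^l$. Then apply H\"older against $1_T$ with exponent $p=2\log_2 2\tau^{-1}$, so that $\|1_T\|_{L_{p/(p-1)}(\mu_{B_1})}\leq\sqrt2\mu_{B_1}(T)$. Note that your $p$ conflates this exponent with the number of samples, which is about $\epsilon^{-2}l^2p$.
\item[(ii)] $|\langle 1_D\ast\mu-1_D,\nu\rangle|\leq\epsilon/2$ for every probability measure $\mu$ on $B_5\cap(B_4-B_4)$, point masses included. This follows from Plancherel. On $\Delta:=\{\gamma:|\widehat{\mu_X}(\gamma)|\geq\frac12\}$, Lemma \ref{lem.localchang} gives $|\widehat{\mu}(\gamma)-1|\leq\frac14\epsilon\sigma$; here it is applied to $X+t\subset B_2$ with auxiliary sets $B_3,B_4$, and the relevant spectrum is that of $\mu_X$, not of $1_X\ast\mu_{B_3}$. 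Off $\Delta$ one has $|\widehat{\mu_X}(\gamma)|^l\leq 2^{-l}$.
\end{enumerate}
After the harmless reduction $D\subset S+T$, both pieces of (ii) are controlled by
\[
\sum_\gamma|\widehat{1_D}(\gamma)\widehat{1_T}(\gamma)\widehat{\mu_S}(\gamma)|\leq\mu_G(T)\sqrt{\mu_G(D)/\mu_G(S)}\leq\mu_G(T)\sqrt{2/\sigma}.
\]
The off-spectrum loss is now only $\sigma^{-1/2}$ rather than $\mu_G(X)^{-1}$. That is why $l\geq C\log 2\sigma^{-1}\epsilon^{-1}$ suffices. It is also where the lower bound on $l$ is actually used, not in constructing $\Gamma$ as you suggest.
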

\begin{proof}
Let $\refconstbig{pd}>1$ be absolute such that
\begin{equation}\label{eqn.hi}
\refconstbig{pd}\epsilon^{-2}l^2(\log 2\tau^{-1})(\log^3 2 \sigma^{-1})\geq  4\refconstbig{llc}\log 2(\sigma/2)^{-32\refconstbig{csl}l^2\epsilon^{-2}\log_2 2\tau^{-1}}
\end{equation}
for all $\epsilon,\sigma,\tau \in (0,1]$. Let $\refconstbig{L}>1$ be such that $l \geq \refconstbig{L}\log 2\sigma^{-1}\epsilon^{-1}$ implies $2^{l-1} \geq \frac{4}{\sqrt{2}-1}\sigma^{-1/2}\epsilon^{-1}$ for all $\epsilon,\sigma \in (0,1]$.  Let $\refconstlittle{962}=\frac{1}{8\refconstbig{12}}$ and $\refconstlittle{8}=\frac{1}{8}$.

Apply Lemma \ref{lem.crosis} -- a local version of the Croot-Sisask almost periodicity lemma -- with the Lemma's $T$ equal to $B_1+(l-1)(B_2-B_2)$; the Lemma's ${B_0}$ equal to $B_2$; the Lemma's $L$ equal to $2$; the Lemma's $S$ equal to $-S$; the Lemma's $K$ equal to $2\sigma^{-1}$; the Lemma's $p$ equal to $2\log_2 2\tau^{-1}$; and the Lemma's $\epsilon$ equal to $\frac{\epsilon}{4l}$.

This gives us a $t \in B_2$ and $X \subset B_2-t$ with $\mu_{B_2-t}(X) \geq (\sigma/2)^{64\refconstbig{csl}l^2\epsilon^{-2}\log_2 2\tau^{-1}}$ such that
\begin{equation*}
\|\rho_{x}(1_D \ast \mu_{-S} ) - 1_D \ast \mu_{-S} \|_{L_p(\mu_{B_1+(l-1)(B_2-B_2)})} \leq \frac{\epsilon}{4l} \text{ for all } x \in X,
\end{equation*}
where $\rho_x(f)(y)=f(y+x)$. For $x_1,\dots,x_l \in X$ we have $x_1+\cdots + x_j \in l(B_2-B_2)$ for all $j \leq l$ and so by the triangle inequality we have
\begin{align*}
& \|\rho_{x_1+\cdots + x_l}(1_D \ast \mu_{-S} ) - 1_D \ast \mu_{-S} \|_{L_p(\mu_{B_1})}\\ &\qquad\leq \sum_{j=0}^{l-1}{ \|\rho_{x_1+\cdots + x_{j+1}}(1_D \ast \mu_{-S} ) - \rho_{x_1+\cdots + x_{j}}(1_D \ast \mu_{-S}) \|_{L_p(\mu_{B_1})}}\\ &\qquad= \sum_{j=0}^{l-1}{ \|\rho_{x_{j+1}}(1_D \ast \mu_{-S} ) - 1_D \ast \mu_{-S} \|_{L_p(\mu_{B_1+x_1+\cdots + x_j})}}\\&\qquad\leq \sum_{j=0}^{l-1}{\left(\frac{\mu_G(B_1+l(B_2-B_2))}{\mu_G(B_1)}\right)^{1/p} \|\rho_{x_{j+1}}(1_D \ast \mu_{-S} ) - 1_D \ast \mu_{-S} \|_{L_p(\mu_{B_1+(l-1)(B_2-B_2)})}}.
\end{align*}
Integrating this we have
\begin{equation*}
\|1_D \ast \mu_{-S} \ast \overbrace{\mu_{-X} \ast \cdots \ast \mu_{-X}}^{l \text{ times}} - 1_D \ast \mu_{-S} \|_{L_p(\mu_{B_1})} \leq \frac{\epsilon}{2\sqrt{2}}.
\end{equation*}
Since $\|1_T\|_{L_{p/(p-1)}(\mu_{B_1})} \leq \mu_{B_1}(T)^{1-1/p} \leq \mu_{B_1}(T)\tau^{-1/2\log_22\tau^{-1}}\leq \sqrt{2}\mu_{B_1}(T)$ we have
\begin{equation*}
|\langle 1_{D}\ast \mu_{-S}\ast \overbrace{\mu_{-X} \ast \cdots \ast \mu_{-X}}^{l \text{ times}}  , 1_T\rangle_{L_2(\mu_{B_1})} - \langle 1_D\ast \mu_{-S},1_T\rangle_{L_2(\mu_{B_1})}| \leq \frac{1}{2}\epsilon \mu_{B_1}(T).
\end{equation*}
This rearranges to
\begin{equation}\label{eqn.ab}
|\langle 1_{D}, 1_T \ast \overbrace{\mu_X\ast \cdots \ast \mu_X}^{l \text{ times}}\ast \mu_S\rangle_{L_2(\mu_G)} - \langle 1_D,  1_T\ast \mu_S\rangle_{L_2(\mu_G)} | \leq \frac{1}{2}\epsilon \mu_{G}(T).
\end{equation}
Apply Lemma \ref{lem.localchang} -- a local version of Chang's theorem -- with the Lemma's $\epsilon$ equal to $\frac{1}{2}$; the Lemma's $\alpha$ equal to $\mu_{B_2-t}(X)$; the Lemma's $\eta$ equal to $\frac{1}{8\refconstbig{12}}\epsilon\sigma$; the Lemma's $\delta$ equal to $\frac{\epsilon\sigma}{8m}$; the Lemma's $k$ equal to $m$, which satisfies the hypothesis of the Lemma by (\ref{eqn.hi}); the Lemma's ${B_0}$ equal to $B_2$; the Lemma's $B_1$ equal to $B_3$; the Lemma's $B_2$ equal to $B_4$; and the Lemma's $A$ equal to $X+t$. This gives a Bohr set $B_5$ with frequency set of size at most $m$ and width $\frac{\epsilon \sigma}{8m}$ such that
\begin{equation*}
|1-\gamma(x)| \leq \refconstbig{12}\cdot \frac{1}{8\refconstbig{12}}\epsilon \sigma + m \cdot \frac{\epsilon\sigma }{8m}=\frac{1}{4}\epsilon\sigma \text{ for all } x \in (B_4-B_4)\cap B_5
\end{equation*}
whenever $|(1_{X+t}\dd\mu_{B_2})^\wedge(\gamma)| \geq \frac{1}{2}\mu_{B_2}(X+t)$. Since $X+t \subset B_2$, this last condition is equivalent to $|\wh{\mu_X}(\gamma)| \geq \frac{1}{2}$.

Write $\Delta:=\{\gamma:|\wh{\mu_X}(\gamma)| \geq \frac{1}{2}\}$. Suppose that $\mu$ is a probability measure supported on $(B_4-B_4)\cap B_5$. Then $|\wh{\mu}(\gamma)-1| \leq \frac{1}{4}\epsilon \sigma$ for all $\gamma \in \Delta$ by the triangle inequality. By Plancherel's Theorem
\begin{align*}
&\left|\langle 1_{D}\ast \mu , 1_T \ast \overbrace{\mu_X\ast \cdots \ast \mu_X}^{l \text{ times}}\ast \mu_S\rangle_{L_2(\mu_{G})} - \langle 1_{D}, 1_T \ast \overbrace{\mu_X\ast \cdots \ast \mu_X}^{l \text{ times}} \ast \mu_S\rangle_{L_2(\mu_{G})} \right|\\ &\qquad \leq \sum_{\gamma\in \Delta}{|\wh{\mu}(\gamma)-1||\wh{1_D}(\gamma)\wh{1_T}(\gamma)\wh{\mu_X}(\gamma)^l \wh{\mu_S}(\gamma)|}
+\sum_{\gamma \not \in \Delta }{|\wh{\mu}(\gamma)-1||\wh{1_D}(\gamma)\wh{1_T}(\gamma)\wh{\mu_X}(\gamma)^l \wh{\mu_S}(\gamma)|}\\ &\qquad \leq \frac{1}{4}\epsilon \sigma\sum_{\gamma\in \Delta}{|\wh{1_D}(\gamma)\wh{1_T}(\gamma)\wh{\mu_X}(\gamma)^l \wh{\mu_S}(\gamma)|}
+2\sum_{\gamma \not \in \Delta }{|\wh{1_D}(\gamma)\wh{1_T}(\gamma)\wh{\mu_X}(\gamma)^l \wh{\mu_S}(\gamma)|}\\ &\qquad \leq  \frac{1}{4}\epsilon \sigma\sum_{\gamma\in \wh{G}}{|\wh{1_D}(\gamma)\wh{1_T}(\gamma) \wh{\mu_S}(\gamma)|}+\frac{1}{2^{l-1}}\sum_{\gamma \in \wh{G} }{|\wh{1_D}(\gamma)\wh{1_T}(\gamma)\wh{\mu_S}(\gamma)|}\\ &\qquad\leq \frac{1}{4}\epsilon \sigma \mu_G(T)\sqrt{\mu_G(D)\mu_G(S)^{-1}} +  \frac{\mu_G(T)\sqrt{\mu_G(D)\mu_G(S)^{-1}}}{2^{l-1}}.
\end{align*}
We may certainly assume that $D \subset S+T \subset -{B_0}+B_1+(l-1)(B_2-B_2)\subset -B_0+B_1+l(B_2-B_2)$, so $\mu_G(D) \leq 2\mu_G({B_0})$. Since $2^{1-l} \leq \frac{\sqrt{2}-1}{4} \epsilon \sqrt{\sigma}$ and $\sigma \mu_G(B_0)=\mu_G(S)$ we can therefore say the right hand side above is at most $\frac{1}{2}\epsilon \mu_G(T)$. Combining this with (\ref{eqn.ab}) we have
\begin{equation*}
\left|\langle 1_{D}\ast \mu, 1_T \ast \overbrace{\mu_X\ast \cdots \ast \mu_X}^{l \text{ times}}\ast \mu_S\rangle_{L_2(\mu_G)} - \langle 1_D, 1_T \ast \mu_S\rangle_{L_2(\mu_G)}\right| \leq \epsilon \mu_G(T).
\end{equation*}
The result follows.
\end{proof}

\begin{proof}[Proof of Theorem \ref{thm.keycount}]
 Let $p=O_{a,b}(N)$ be a prime larger than $(2|a|+|b|)N$ (possible by Bertrand's postulate \cite[Theorem 417, p343]{Hardy:1960aa}); let $G:=\Z/p\Z$; let $\Gamma_0=\{G \rightarrow S^1; x \mapsto \exp(2\pi i x/p)\}$, and $\delta_0=\Omega(1/(2|a|+|b|))$ be such that $B(\Gamma_0,\delta_0)=\{z+p\Z: z \in  \{-N,\dots,N\}\}$.  The quotient map $\Z \rightarrow \Z/p\Z$ maps the $r$-colouring $\mathcal{C}$ to an $r$-colouring of $\{z + p\Z: -N \leq z\leq N\}$ which restricts to an (at most) $r$-colouring $\mathcal{C}'$ of $B(\Gamma_0,\delta_0)$. Furthermore, if $x,y,z \in B(\Gamma_0,\delta_0)$ are all the same colour in $\mathcal{C}'$ and have $ax-ay\equiv bz \pmod p$, then there are $x',y',z' \in \{-N,\dots,N\}$ congruent to $x$, $y$, and $z$ respectively, all of the same colour in $\mathcal{C}$ and with $ax-ay=bz$. Hence there are at least as many monochromatic solutions to $ax-ay=bz$ in $\mathcal{C}$ as there are to $ax-ay\equiv bz \pmod p$ in $\mathcal{C}'$. We shall prove the result by giving a lower bound on the latter.
 
We proceed iteratively to construct frequency sets $\Gamma_0,\Gamma_1,\dots$ of sizes $d_0 \leq d_1\leq \dots$ respectively, and widths $1 \geq \delta_0 \geq \delta_1 \geq \dots >0$, and so that the corresponding Bohr sets are nested, meaning $B(\Gamma_0,\delta_0) \supset B(\Gamma_1,\delta_1)\supset \dots$. Note that we do \emph{not} require the $\Gamma_i$s themselves to be nested.

Write $\mathcal{C}'=\{A_1,\dots,A_r\}$ and $\cdot$ for the scalar multiplication of $\Z/p\Z$ on itself. For each step $i$ of the iteration, and $j \in [r]$ write
\begin{equation*}
S_{i,j}:=\min{\{\|1_{a \cdot A_j} \ast \mu\|_{\infty} : \mu \text{ is a probability measure supported in } B(\Gamma_i,\delta_i)\}}.
\end{equation*}
Since $B(\Gamma_i,\delta_i) \subset B(\Gamma_k,\delta_k)$ whenever $i \geq k$ we have
\begin{equation}\label{mon2}
S_{i,j} \geq S_{k,j}\text{ whenever }i \geq k, \text{ and } S_{i,j} \leq 1 \text{ for all }i.
\end{equation}

At step $i$ of the iteration we shall show that either
\begin{enumerate}
\item\label{cd1} there is some $j \in [r]$ such that
\begin{equation*}
\langle 1_{a\cdot A_j} \ast 1_{-a\cdot A_j},1_{b \cdot A_j}\rangle_{L_2(\mu_G)} \geq \left(\frac{\delta_i}{2rd_i|a||b|}\right)^{O(d_i)};\text{ or }
\end{equation*}
\item \label{cd0} there is $j \in [r]$ such that $S_{i,j} < 1/2r$ and $S_{i+1,j} \geq 1/2r$;
\item\label{cd2} there is $j \in [r]$ such that
\begin{enumerate}
\item\label{cd21} $d_{i+1} \leq d_i + O(\log^82r)$ and $\delta_{i+1} \geq  \exp(-O_{a,b}(\log^22r+\log 2d_i)) \delta_i$ ;
\item\label{cd22} $S_{i+1,j} \geq (1+\Omega(1))S_{i,j}$; and
\item\label{cd23} $S_{i,j} \geq 1/2r$.
\end{enumerate}
\end{enumerate}
We stop the iteration the first time we are in case (\ref{cd1}). By monotonicity (\ref{mon2}) we can be in case (\ref{cd0}) at most once for each $j \in [r]$. Suppose that we are in case (\ref{cd2}) for a particular $j$ at steps $i_1 <i_2<\dots < i_k$ of the iteration. Then by monotonicity (\ref{mon2}) and (\ref{cd22}) \& (\ref{cd23}) we have
\begin{align*}
1 \geq S_{i_k+1,j} \geq (1+\Omega(1))S_{i_k,j} & \geq (1+\Omega(1))S_{i_{k-1}+1,j}\\ & \geq \qquad \dots \qquad  \geq (1+\Omega(1))^kS_{i_1,j} \geq (1+\Omega(1))^k\cdot (1/2r).
\end{align*}
It follows that $k=O(\log 2r)$. Since there are $r$ possible values for $j$ we conclude that we must have terminated the iteration at step $i_0\leq r\cdot (1+O(\log 2r))$, in which case
\begin{equation*}
d_{i_0} =i_0\cdot O(\log^82r) = O(r\log^92r)
\end{equation*}
and
\begin{equation*}
\delta_{i_0} = \exp(-i_0O_{a,b}(\log^22r))=\exp(-O_{a,b}(r\log^32r))
\end{equation*}
by (\ref{cd21}) and the fact that $\delta_0=\Omega_{a,b}(1)$.

Since $|G|^2\langle 1_{a\cdot A_j} \ast 1_{-a\cdot A_j},1_{b \cdot A_j}\rangle_{L_2(\mu_G)} $ is exactly the number of solutions to $ax-ay\equiv bz \pmod p$ for $x,y,z \in A_j$ we have the conclusion of Theorem \ref{thm.keycount} from (\ref{cd1}).

It remains to show that at each stage of the iteration we are either in case (\ref{cd1}), (\ref{cd0}) or (\ref{cd2}). Suppose we are at step $i$ of the iteration.

Anticipating an application of Lemma \ref{lem.itlem} and the many hypotheses we need to ensure there we apply Lemma \ref{lem.regular} repeatedly. Let $k :=\lceil \refconstbig{spec}\log 4r\rceil$, $l :=\lceil  \refconstbig{spec3}k \log 4r\rceil$, and  $m := \lceil \refconstbig{spec2}l^2k^2\log^24r\rceil$.

By Lemma \ref{lem.regular} there is a Bohr set $B_0 \subset B(\Gamma_i,\delta_i)$ of width $\Omega(\delta_i)$ and frequency set $\Gamma_i$, and another $B_1'$ of width $\Omega(\delta_i/r^2|a||b|d_i)$ and frequency set $\Gamma_i$ such that
\begin{equation*}
\mu_G(B_0+4abB_1')\leq \left(1+\min\left\{\frac{\refconstlittle{me}}{2r},\frac{\refconstlittle{16}}{4r^2}\right\}\right)\mu_G(B_0) \text{ and }4abB_1' \subset B_0.
\end{equation*}
Let $\Gamma_i':=\{G \rightarrow S^1; x \mapsto \gamma(b^{-1}x): \gamma \in \Gamma_i\}$ and $B_1''$ be the Bohr set with frequency set $\Gamma_i'$ and the same width as $B_1'$. Then $B_1''=b \cdot B_1'$ -- this equality is key and we will use it again -- and so $4B_1'' \subset B_0$ by the triangle inequality. 

By Lemma \ref{lem.regular} again there is a Bohr set $B_1\subset B_1''$ of width $\Omega(\delta_i/r^2|a||b|d_i)$ and frequency set $\Gamma_i'$, and another $B_2'$ of width $\Omega(\delta_i/r^3l|a||b|d_i^2)$ and frequency set $\Gamma_i'$ such that
\begin{equation}\label{eqn.b2size}
\mu_G(B_1+2(l+1)B_2')\leq \left(1+\frac{1}{4r}\right)\mu_G(B_1) \text{ and }2(l+1)B_2' \subset B_1.
\end{equation}
By Lemma \ref{lem.regular} again there is a Bohr set $B_2 \subset B_2'$ of width $\Omega(\delta_i/r^3l|a||b|d_i^2)$ and frequency set $\Gamma_i'$, and another $B_3'$ of width $\Omega(\delta_i/r^3l^2|a||b|d_i^3)$ and frequency set $\Gamma_i'$  such that
\begin{equation*}
\mu_G(B_2+2lB_3')\leq 2\mu_G(B_2) \text{ and }2lB_3' \subset B_2.
\end{equation*}
By Lemma \ref{lem.regular} again there is a Bohr set $B_3 \subset B_3'$ of width $\Omega(\delta_i/r^3l^2|a||b|d_i^3)$ and frequency set $\Gamma_i'$, and another $B_4'$ of width $\Omega(\delta_i/r^3l^2m|a||b|d_i^4)$ and frequency set $\Gamma_i'$  such that
\begin{equation*}
\mu_G(B_3+mB_4')\leq 2\mu_G(B_3) \text{ and } mB_4' \subset B_3.
\end{equation*}
Finally, by Lemma \ref{lem.regular} again there is a Bohr set $B_4 \subset B_4'$ of width $\Omega(\delta_i/r^3l^2m|a||b|d_i^4)$ and frequency set $\Gamma_i'$, and another $B_5'$ of width $\Omega(\delta_i/(2r)^{3+4k}l^2m|a||b|d_i^5)$ and frequency set $\Gamma_i'$  such that
\begin{equation*}
\mu_G(B_4+B_5')\leq \left(1+\refconstlittle{96}(2r)^{-4k}\right)\mu_G(B_4) \text{ and } B_5' \subset B_4.
\end{equation*}

The measure $\mu:=\wt{\mu_{B_1}} \ast \wt{\mu_{B_2}} \ast \mu_{B_2} \ast \mu_{B_1}$ is supported in $4B_1 \subset 4(b\cdot B_1') \subset b\cdot B_0$. Since $\mathcal{C}'$ is a colouring of $B_0$, $\{b\cdot A_1,\dots,b\cdot A_r\}$ is a colouring of $b\cdot B_0$ and hence of $4 B_1$. Thus, by averaging there is $j$ such that $\mu(b \cdot A_j) \geq 1/r$.

Suppose that $S_{i,j} < 1/2r$. Since $\mu(b \cdot A_j) \geq 1/r$ we have $\|1_{b \cdot A_j} \ast \mu_{B_1}\|_{\infty} \geq 1/r$, and hence $\|1_{a\cdot A_j} \ast \mu_{(ab^{-1})\cdot B_1}\|_\infty \geq 1/r$. We apply Lemma \ref{lem.large} with the Lemma's $\eta$ set to $1/4r$, the Lemma's $B_0$ equal to $(ab^{-1})\cdot B_1$ and the Lemma's $B_1$ equal to $(ab^{-1})\cdot B_2$ which satisfies the required inequality by (\ref{eqn.b2size}) since the map $x \mapsto ab^{-1}x$ is a bijection of $G$. Set $\Gamma_{i+1}:=\{G \rightarrow S^1; x \mapsto \gamma(a^{-1}x): \gamma \in \Gamma_i\}$ and let $\delta_{i+1}$ be the width of $B_2$. Then $B(\Gamma_{i+1},\delta_{i+1})=ab^{-1}\cdot B_2$ and so we have (\ref{cd0}), and moreover $ab^{-1}\cdot B_2 \subset ab^{-1}\cdot B_1 \subset a\cdot B_1' \subset B_0$ since $B_1 \subset B_1'' = b\cdot B_1'$. It follows that $B(\Gamma_{i+1},\delta_{i+1}) \subset B(\Gamma_i,\delta_i)$ as required.

On the other hand, suppose that $S_{i,j} \geq 1/2r$. Let $x_i \in G$ be such that $\|1_{a \cdot A_j} \ast \mu_{B_0}\|_\infty = \mu_{B_0}(x_i-a\cdot A_j)$. Apply Lemma \ref{lem.itlem} with the Lemma's $\alpha:=\mu_{B_0}(x_i-a\cdot A_j)\geq 1/2r$, $\delta = \mu(b\cdot A_j)\geq 1/r$, $k$, $m$, $l$ and $B_0$,\dots,$B_5$ as defined. If the first conclusion of Lemma \ref{lem.itlem} holds then
\begin{align*}
\frac{1}{8r^3}\mu_G(B_0) & \leq \langle 1_{x_i-a\cdot A_j} \ast 1_{-(x_i - a\cdot A_j)},1_{b \cdot A_j}\rangle_{L_2(\mu)}\\ & =  \langle 1_{a\cdot A_j} \ast 1_{- a\cdot A_j},1_{b \cdot A_j}\rangle_{L_2(\mu)} \leq \mu_G(B_1)^{-2}\mu_G(B_2)^{-2}\langle  1_{a\cdot A_j} \ast 1_{- a\cdot A_j},1_{b \cdot A_j}\rangle_{L_2(\mu_G)},
\end{align*}
and we are in case (\ref{cd1}) by the size bound for Bohr sets (Lemma \ref{lem.bohrsize}). If the second conclusion of Lemma \ref{lem.itlem} holds then let $\Gamma_{i+1}$ equal $\Gamma_i$ union the frequency set of the Bohr set $B_6$ and let $\delta_{i+1}$ be the minimum of the width of $B_5$ and $\refconstlittle{128}(1/2r)^{4k}/m$. It follows that
\begin{equation*}
d_{i+1} \leq d_i + m = d_i+O(\log^82r)\text{ and }\delta_{i+1} = \exp(-O_{a,b}(\log^22r + \log 2d_i))\delta_i,
\end{equation*}
 and since $B(\Gamma_{i+1},\delta_{i+1}) \subset B_6 \cap (B_5-B_5)$, 
 \begin{equation*}
 S_{i+1,j}\geq (1+\refconstlittle{32})\mu_{B_0}(x_i-a \cdot A_j)=(1+\refconstlittle{32})\|1_{a \cdot A_j} \ast \mu_{B_0}\|_\infty \geq (1+\refconstlittle{32})S_{i,j}.
 \end{equation*}
It follows that (\ref{cd21}) and (\ref{cd22}) hold (and by supposition (\ref{cd23})), completing case (\ref{cd2}) as required. The result is proved.
\end{proof}

\appendix

\section{Bohr sets}\label{ap.bohr}

Bohr sets have well-controlled sizes; the following is a version of \cite[Lemma 4.19, p188]{taovu::} (see also \cite[\S2]{bou::5}) proved with a standard covering argument.
\begin{lemma}\label{lem.bohrsize}
Suppose that $\Gamma$ is a set of $d$ characters. Then
\begin{equation*}
\mu_G(B(\Gamma,2\delta)) \leq \exp(\newconstbig{grow}d)\mu_G(B(\Gamma,\delta))\text{ and }\mu_G(B(\Gamma,\delta))\geq (\delta/2)^{O(d)}.
\end{equation*}
\end{lemma}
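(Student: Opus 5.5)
We prove the two inequalities separately: a covering argument for the doubling bound, and a pigeonhole argument for the lower bound. Write $\Gamma=\{\gamma_1,\dots,\gamma_d\}$ and consider the map $\phi:G\rightarrow (S^1)^d$, $x\mapsto (\gamma_1(x),\dots,\gamma_d(x))$. Then $B(\Gamma,\delta)$ is exactly the preimage under $\phi$ of the product of arcs $\{\theta\in S^1:|\theta-1|\leq\delta\}^d$. Since each $\gamma_j$ is a homomorphism, $\gamma_j(x-y)=\gamma_j(x)\overline{\gamma_j(y)}$, and $|\gamma_j(x)-\gamma_j(y)|=|\gamma_j(x-y)-1|$. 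Both bounds reduce to cell-counting in the torus $(S^1)^d$.

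\textbf{Doubling bound.} Partition the arc $\{|\theta-1|\leq 2\delta\}$ of $S^1$ into at most $O(1)$ sub-arcs, each of chord-diameter at most $\delta$. This is possible with an absolute constant, say $8$ pieces, since chord length and arc length are comparable up to a factor $\pi/2$ on short arcs, and the arc is short or else all of $S^1$. Taking products gives a partition of the region $\{|\theta_j-1|\leq 2\delta\ \forall j\}$ into at most $8^d$ cells. Pull these back via $\phi$ to cover $B(\Gamma,2\delta)$ by at most $8^d$ sets $P$. For any two points $x,y$ of the same $P$ we have $|\gamma_j(x-y)-1|=|\gamma_j(x)-\gamma_j(y)|\leq\delta$ for all $j$, so $P-P\subset B(\Gamma,\delta)$. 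Fixing any $x_P\in P$ gives $P\subset x_P+B(\Gamma,\delta)$, hence $\mu_G(P)\leq\mu_G(B(\Gamma,\delta))$. Summing over the cells gives $\mu_G(B(\Gamma,2\delta))\leq 8^d\mu_G(B(\Gamma,\delta))$, which is the first claim with $\refconstbig{grow}=\log 8$.

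\textbf{Lower bound.} Cover all of $(S^1)^d$ by $N\leq (C/\delta)^d$ product cells, each factor an arc of chord-diameter at most $\delta$; $C=O(1)$ suffices. By pigeonhole, some cell $Q$ has $\mu_G(\phi^{-1}(Q))\geq 1/N$. As above, $\phi^{-1}(Q)-\phi^{-1}(Q)\subset B(\Gamma,\delta)$, so for any fixed $x_0\in\phi^{-1}(Q)$ the translate $\phi^{-1}(Q)-x_0$ lies in $B(\Gamma,\delta)$. Therefore $\mu_G(B(\Gamma,\delta))\geq (\delta/C)^d=(\delta/2)^{O(d)}$, using $\delta\leq 1$ so that $\log(C/\delta)=O(\log(2/\delta))$.

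The only point requiring care is the geometric fact that an arc of chord-radius $2\delta$, or all of $S^1$, can be split into $O(1)$ pieces of chord-diameter $\delta$, and likewise that $S^1$ splits into $O(1/\delta)$ such pieces. This follows from the comparability of chord and arc length; nothing else is an obstacle.
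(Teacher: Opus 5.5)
Your proof is correct, and the doubling bound is the same covering argument as the paper's. The paper covers $Q_{2\delta}\subset\C^d$ by translates of $Q_{\delta/2}$ centred at a maximal $Q_{\delta/4}$-separated set and bounds their number by $100^d$ with a volume count, whereas you partition each arc explicitly into at most $8$ pieces. In both versions each pulled-back piece has its difference set inside $B(\Gamma,\delta)$.

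For the lower bound the paper iterates the doubling bound down from $\mu_G(B(\Gamma,2))=1$. Your direct pigeonhole over $O(1/\delta)^d$ cells covering the whole torus is equally valid; both arguments implicitly use $\delta\le 1$.
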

\begin{proof}
Let $\Gamma=\{\gamma_1,\dots,\gamma_d\}$ and define the map $\phi:G \rightarrow \C^d; x \mapsto (\gamma_i(x))_{i=1}^d$. Write $D_\delta:=\{z \in \C: |z-1| \leq \delta\}$ and $Q_\delta:=D_\delta^d$ so that $\mu(Q_\delta)=(\pi \delta^2)^d$ where $\mu$ is Lebesgue measure on $\C^d$. Let $T \subset Q_{2\delta}$ be maximal such that for $s,t \in T$ distinct, $(t+Q_{\delta/4})\cap (s+Q_{\delta/4})= \emptyset$ --  in words $T \subset Q_{2\delta}$ is maximal $Q_{\delta/4}$-separated.

Then $\mu(Q_{\delta/4})|T| \leq \mu(Q_{2\delta+\delta/2})$ and $Q_{2\delta} \subset T+Q_{\delta/2}$ by the triangle inequality. It follows that $|T| \leq 100^{d}$, and
\begin{equation*}
\mu_G(B(\Gamma,2\delta)) = \mu_G(\phi^{-1}(Q_{2\delta})) \leq \mu_G(\phi^{-1}(T+Q_{\delta/2})) \leq 100^{d}\sup_{t \in T}{\mu_G(\phi^{-1}(t+Q_{\delta/2}))}.
\end{equation*}
 If $x,y \in \phi^{-1}(t+Q_{\delta/2})$ then $\phi(x-y) \in Q_\delta$ and so $\mu_G(\phi^{-1}(t+Q_{\delta/2})) \leq \mu_G(B(\Gamma,\delta))$. The result is proved, and the size bound follows by iterated application of the growth bound since $\mu_G(B(\Gamma,2))=1$.
\end{proof}
This growth lets us identify pairs of Bohr sets that produce approximate actions as discussed in \S\ref{sec.mil}:
\begin{lemma}\label{lem.regular} Suppose $\delta \in (0,1]$ and $\Gamma$ is a set of $d$ characters, and $l \in \N$, $\eta \in (0,1]$. Then there is $\delta_* \in [\delta/2,\delta]$ and $\delta' \in  \Omega(\delta \eta/ld)$ such that
\begin{equation*}
\mu_G(B(\Gamma,\delta_*)+l B(\Gamma,\delta'))\leq (1+\eta)\mu_G(B(\Gamma,\delta_*)) \text{ and } lB(\Gamma,\delta') \subset B(\Gamma,\delta_*).
\end{equation*}
\end{lemma}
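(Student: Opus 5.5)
The plan is Bourgain's regularity argument: a pigeonhole over dyadic scales using the doubling bound of Lemma \ref{lem.bohrsize}. Write $f(t):=\log_2\mu_G(B(\Gamma,t))$ for $t\in(0,2]$. This is non-decreasing in $t$, and Lemma \ref{lem.bohrsize} gives
\begin{equation*}
f(\delta)-f(\delta/2)\le \refconstbig{grow}d/\log 2=O(d).
\end{equation*}

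First we find a scale $\delta_*\in[\delta/2,\delta]$ at which $f$ grows slowly. Suppose, to the contrary, that for every $t\in[\delta/2,\delta]$ and some small $\kappa$ to be chosen we had
\begin{equation*}
f(t(1+\kappa))-f(t) > \frac{C d\kappa}{\log 2}
\end{equation*}
for a suitably large absolute constant $C$. Chaining this over roughly $1/(2\kappa)$ steps from $\delta/2$ up to just below $\delta$ makes the total increase exceed $O(d)$, contradicting the doubling bound. (We keep the last step inside $[\delta/2,\delta]$; if necessary we instead run the chain on $[\delta/2,3\delta/4]$.) Hence some $\delta_*$ has
\begin{equation*}
\mu_G(B(\Gamma,\delta_*(1+\kappa)))\le 2^{O(d\kappa)}\mu_G(B(\Gamma,\delta_*)).
\end{equation*}
Choosing $\kappa=c\eta/d$ with $c$ small makes the right-hand factor at most $1+\eta$, since $\eta\le 1$. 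A cleaner way to present this is a direct telescoping argument over the points $t_j=\frac{\delta}{2}(1+\kappa)^j$.

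Second, we set $\delta':=\kappa\delta_*/l$, so $\delta'=\Omega(\delta\eta/ld)$. The triangle inequality gives $|\gamma(x+y)-1|\le|\gamma(x)-1|+|\gamma(y)-1|$, because $|\gamma(x)|=1$. It follows that
\begin{equation*}
lB(\Gamma,\delta')\subset B(\Gamma,l\delta')\subset B(\Gamma,\delta_*)
\end{equation*}
as $\kappa\le1$, and
\begin{equation*}
B(\Gamma,\delta_*)+lB(\Gamma,\delta')\subset B(\Gamma,\delta_*+l\delta')=B(\Gamma,\delta_*(1+\kappa)).
\end{equation*}
Combining this with the first step gives the measure bound. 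There is a minor issue when $\delta_*(1+\kappa)>2$, but then the Bohr set is all of $G$ and the doubling is trivial.

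The only step with real content is the pigeonhole. The obstacle there is making it rigorous for a possibly discontinuous monotone $f$ while keeping $\delta_*$ inside $[\delta/2,\delta]$. I would handle this with the discrete chain $t_j$, $0\le j\le J$, where $J\approx\log 2/(2\log(1+\kappa))$ is chosen so that $t_J\le\delta$. The telescoping sum $\sum_j\bigl(f(t_{j+1})-f(t_j)\bigr)\le f(\delta)-f(\delta/2)=O(d)$ then forces some increment to be at most $O(d)/J=O(d\kappa)$, and we take $\delta_*=t_j$ for that index.
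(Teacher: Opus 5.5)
Your proof is correct and is essentially the paper's argument: the paper also telescopes the doubling bound of Lemma \ref{lem.bohrsize} over a chain of widths between $\delta/2$ and $\delta$, pigeonholes to find a step with ratio at most $1+\eta$, and finishes with the triangle inequality. The only cosmetic difference is the chain itself. The paper uses the arithmetic progression $\delta_i=\delta/2+il\delta'$ with a fixed $\delta'=\delta/2lkd$, so $\delta'$ does not depend on which $\delta_*$ is selected. Your geometric chain gives $\delta'=\kappa\delta_*/l$, which depends on $\delta_*$; this is harmless since $\delta_*\geq\delta/2$.
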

\begin{proof}
Let $k=O(\eta^{-1})$ be a natural number such that $\exp(\refconstbig{grow}/k) \leq 1+\eta$, and let $\delta'=\delta/2lkd$ and $\delta_i=\delta/2+il\delta'$ for $0 \leq i \leq kd-1$. Then 
\begin{equation*}
\prod_{i=0}^{kd-1}{\frac{\mu_G(B(\Gamma,\delta_i+l\delta'))}{\mu_G(B(\Gamma,\delta_i))}} \leq \frac{\mu_G(B(\Gamma,\delta))}{\mu_G(B(\Gamma,\delta/2))} \leq \exp(\refconstbig{grow}d).
\end{equation*}
By averaging there is some $\delta_*=\delta_i$ such that 
\begin{equation*}
\frac{\mu_G(B(\Gamma,\delta_*+l\delta'))}{\mu_G(B(\Gamma,\delta_*))} \leq 1+\eta.
\end{equation*}
The result follows by the triangle inequality.
\end{proof}
There is a stronger result in \cite[Lemma 4.24, p192]{taovu::} (see also \cite[\S3]{bou::5}) essentially showing that $\delta_*$ can be chosen to be independent of $\eta$. 

The following lemma is useful for relating large values of the Fourier transform to Bohr sets, as we will need in Appendix \ref{ap.chang}:
\begin{lemma}\label{lem.inv}
Suppose that $\eta,\kappa \in (0,1]$, $\mu_G(B_1+{B_0}) \leq (1+\eta)\mu_G({B_0})$ and $|\wh{\mu_{{B_0}}}(\gamma)| \geq \kappa$. Then $|1-\gamma(x)|\leq 2\eta\kappa^{-1}$ for all $x \in B_1-B_1$.
\end{lemma}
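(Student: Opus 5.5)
The plan is to show that $\gamma$ is almost invariant under translation by elements of $B_1-B_1$. We do this by comparing $\wh{\mu_{B_0}}$ with its translates. The key identity is
\begin{equation*}
\wh{\mu_{x+B_0}}(\gamma)=\overline{\gamma(x)}\,\wh{\mu_{B_0}}(\gamma) \quad\text{for any } x\in G,
\end{equation*}
which gives
\begin{equation*}
|1-\gamma(x)|\,|\wh{\mu_{B_0}}(\gamma)| = |\wh{\mu_{B_0}}(\gamma)-\wh{\mu_{x+B_0}}(\gamma)| \leq \|\mu_{B_0}-\mu_{x+B_0}\|,
\end{equation*}
where $\|\cdot\|$ is total variation. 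Since $|\gamma|=1$, the Fourier--Stieltjes transform of a signed measure is bounded by its total mass. So it suffices to bound the total variation distance between $\mu_{B_0}$ and its translate by $x$.

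First step: handle $x\in B_1$. For $x\in B_1$ we have $x+B_0\subset B_1+B_0$. Hence
\begin{equation*}
|(x+B_0)\cap B_0| \geq |B_0|+|x+B_0|-|B_1+B_0| \geq (1-\eta)|B_0|,
\end{equation*}
and so $|(x+B_0)\triangle B_0|\leq 2\eta|B_0|$. This gives $\|\mu_{B_0}-\mu_{x+B_0}\|\leq 2\eta$.

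Second step: pass to differences $x=y-y'$ with $y,y'\in B_1$. The cleanest way avoids the loss of doubling. Translation invariance of total variation gives
\begin{equation*}
\|\mu_{y'+B_0}-\mu_{y+B_0}\|=\|\mu_{B_0}-\mu_{(y-y')+B_0}\|.
\end{equation*}
Moreover, $y+B_0$ and $y'+B_0$ both lie in $B_1+B_0$, a set of size at most $(1+\eta)|B_0|$. Two sets of size $|B_0|$ inside a set of size $(1+\eta)|B_0|$ intersect in at least $(1-\eta)|B_0|$ elements. So their symmetric difference has size at most $2\eta|B_0|$, and the total variation is at most $2\eta$.

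Combining, $|1-\gamma(x)|\kappa\leq|1-\gamma(x)||\wh{\mu_{B_0}}(\gamma)|\leq 2\eta$ for all $x\in B_1-B_1$, which is the claimed bound.

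There is no real obstacle. The only point needing care is to get the constant $2$ rather than $4$. The naive triangle inequality through $B_0$ would lose a factor of two, and comparing $y+B_0$ with $y'+B_0$ directly inside $B_1+B_0$ avoids this.
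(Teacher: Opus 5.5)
Your proof is correct and is essentially the paper's argument. The paper also writes $x=s-t$ with $s,t\in B_1$, notes that $|1-\gamma(x)||\wh{\mu_{B_0}}(\gamma)|=|\wh{\mu_{t+B_0}}(\gamma)-\wh{\mu_{s+B_0}}(\gamma)|$, and bounds this by $\mu_G((t+B_0)\triangle(s+B_0))/\mu_G(B_0)\leq 2\eta$ because both translates lie in $B_1+B_0$.

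Your \emph{first step} is not needed. As written it also tacitly assumes $B_0\subset B_1+B_0$, which essentially means $0\in B_1$ and is not a hypothesis of the lemma. Your second step alone gives the result.
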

\begin{proof} For $x \in B_1-B_1$ there are $s,t \in B_1$ such that $x=s-t$, so $|\gamma(s)-\gamma(t)|=|1-\gamma(x)|$. Now
\begin{equation*}
|\gamma(s)-\gamma(t)||\wh{\mu_{B_0}}(\gamma)| = |\wh{\mu_{t+{B_0}}}(\gamma) - \wh{\mu_{s+{B_0}}}(\gamma)|  \leq \frac{\mu_G((t+{B_0}) \triangle (s+{B_0}))}{\mu_G({B_0})} \leq 2\eta.
\end{equation*}
Dividing by $\kappa$ gives the result.
\end{proof}
By a related argument Bohr sets support a technical device for replacing density by a sort of `hereditary density' which is better behaved:
\begin{lemma}\label{lem.large}
Suppose that $\alpha,\eta \in (0,1]$, $\mu_G(B_0+B_1) \leq (1+\eta)\mu_G(B_0)$, and $\|1_A \ast \mu_{B_0}\|_\infty \geq \alpha$. Then $\min{\{\|1_A \ast \mu\|_\infty : \mu \text{ is a probability measure supported in } B_1-B_1\}} \geq \alpha - 2\eta$.
\end{lemma}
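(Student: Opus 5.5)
Fix $x$ with $1_A \ast \mu_{B_0}(x) \geq \alpha$, that is $\mu_G(A\cap(x-B_0)) \geq \alpha\mu_G(B_0)$. The idea is that $\mu_{B_0}$ is almost invariant under translation by elements of $B_1-B_1$, so $1_A\ast\mu_{B_0}$ is almost constant along such translates. We then compare an arbitrary $\mu$ supported on $B_1-B_1$ with $\mu_{B_0}$ by averaging.

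First, for $s,t\in B_1$, both $s+B_0$ and $t+B_0$ lie inside $B_1+B_0$, which has measure at most $(1+\eta)\mu_G(B_0)$. Hence each of them differs from $B_1+B_0$ in measure at most $\eta\mu_G(B_0)$, and so
\begin{equation*}
\mu_G((s+B_0)\triangle(t+B_0))\leq 2\eta\mu_G(B_0).
\end{equation*}
This is the same estimate used in Lemma \ref{lem.inv}. Translating, we get $\mu_G((u+B_0)\triangle B_0)\leq 2\eta\mu_G(B_0)$ for every $u\in B_1-B_1$. Consequently $\|\mu_{B_0}-\mu_{u+B_0}\|\leq 2\eta$ in total variation, and
\begin{equation*}
|1_A\ast\mu_{B_0}(y)-1_A\ast\mu_{B_0}(y-u)|\leq 2\eta
\end{equation*}
for all $y$ and all $u\in B_1-B_1$.

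Now let $\mu$ be a probability measure supported in $B_1-B_1$. By associativity and commutativity of convolution,
\begin{equation*}
1_A\ast\mu\ast\mu_{B_0}(x)=\int 1_A\ast\mu_{B_0}(x-u)\dd\mu(u)\geq \alpha-2\eta
\end{equation*}
by the previous step. On the other hand,
\begin{equation*}
1_A\ast\mu\ast\mu_{B_0}(x)=\int 1_A\ast\mu(x-y)\dd\mu_{B_0}(y)\leq\|1_A\ast\mu\|_\infty.
\end{equation*}
Combining the two gives $\|1_A\ast\mu\|_\infty\geq\alpha-2\eta$ for every such $\mu$, and hence the same bound for the minimum (the minimum is attained by compactness of the finite-dimensional simplex, or the bound holds for the infimum anyway).

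The only delicate point is the symmetric difference bound, and in particular getting the constant $2\eta$ rather than something worse. This works because both translates sit inside the single set $B_1+B_0$, which is the argument already used in Lemma \ref{lem.inv}. Everything else is Fubini.
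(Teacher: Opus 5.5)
Your proof is correct and follows essentially the same route as the paper. Both arguments bound $\mu_G((s+B_0)\triangle(t+B_0))\leq 2\eta\mu_G(B_0)$ for $s,t\in B_1$, deduce that translates of $\mu_{B_0}$ by elements of $B_1-B_1$ are within $2\eta$ of $\mu_{B_0}$ in total variation, average against $\mu$, and finish with $\|1_A\ast\mu\|_\infty\geq\|1_A\ast\mu\ast\mu_{B_0}\|_\infty$; the only cosmetic difference is that the paper packages the averaging step as $\|\mu_{B_0}\ast\mu-\mu_{B_0}\|\leq 2\eta$, whereas you argue pointwise at a maximising $x$.
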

\begin{proof}
If $x$ is in the support of $\mu$ then there is $z,w \in B_1$ such that $x=z-w$ and
\begin{equation*}
\|\mu_{B_0+x} - \mu_{B_0}\|=\|\mu_{B_0+z} - \mu_{B_0+w}\| \leq \frac{\mu_{G}( (B_0+z) \triangle (B_0+w))}{\mu_G(B_0)} \leq 2\eta.
\end{equation*}
By the triangle inequality we conclude that $\|\mu_{B_0} \ast \mu - \mu_{B_0}\|\leq 2\eta$ and hence
\begin{equation*}
\|1_A \ast \mu\|_\infty \geq \|1_A \ast \mu_{B_0}\ast \mu\|_\infty \geq \|1_A \ast \mu_{B_0}\|_\infty - 2\eta.
\end{equation*}
 The result is proved.
\end{proof}

\section{A local version of spectral positivity}\label{ap.specpos}

We need the following version of the spectral positivity arguments of Kelley and Meka \cite[\S5.2]{kelmek::0}.
\begin{lemma}\label{lem.specpos}
Suppose that $\eta,\alpha,\epsilon,\delta \in (0,1]$, $k$ is a natural number, $\mu_G(B_1+{B_0}) \leq (1+\eta)\mu_G({B_0})$, $\mu_{B_0}(A)= \alpha$, $\mu$ is a positive-definite probability measure supported on $B_1-B_1$, $\mu(D)=\delta$, and
\begin{equation*}
|\langle 1_{A \cap {B_0}} \ast 1_{-(A\cap {B_0})},1_D\rangle_{L_2(\mu)}-\delta \alpha^2\mu_G({B_0})| \geq \epsilon\delta \alpha^2\mu_G({B_0}).
\end{equation*}
Then
\begin{equation*}
\|1_{A\cap {B_0}} \ast 1_{-(A\cap {B_0})}\|_{L_{2k}(\mu)} \geq ((1+\epsilon)(\delta/2)^{1/2k} - \newconstbig{specpos}\eta \alpha^{-1})\alpha^{2}\mu_G({B_0}).
\end{equation*}
\end{lemma}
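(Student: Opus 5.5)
Write $f := 1_{A\cap B_0}\ast 1_{-(A\cap B_0)}$ and $g := f - \alpha^2\mu_G(B_0)$. The plan is to run the Kelley--Meka spectral positivity argument in three steps.

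\textbf{Step 1: the mean of $f$ under $\mu$ is close to $\alpha^2\mu_G(B_0)$.}
Since $\mu$ is positive definite, $\wh{\mu}\geq 0$, and $\wh{f}(\gamma)=|\wh{1_{A\cap B_0}}(\gamma)|^2\geq 0$. By Plancherel,
\begin{equation*}
\langle f,\mu\rangle=\sum_\gamma |\wh{1_{A\cap B_0}}(\gamma)|^2\wh{\mu}(\gamma) = \mu_G(B_0)^2\int (1_A\ast\mu\ast\mu_{B_0}\text{-type expression})\,\dots
\end{equation*}
I would rather get the comparison directly in physical space. We have $\langle f,\mu\rangle = \int\!\!\int 1_{A\cap B_0}(x)1_{A\cap B_0}(x-u)\,\dd\mu(u)\,\dd\mu_G(x)$. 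For $u\in B_1-B_1$, the hypothesis $\mu_G(B_1+B_0)\leq (1+\eta)\mu_G(B_0)$ gives $\mu_G(B_0\triangle (B_0+u))\leq 2\eta\mu_G(B_0)$, exactly as in Lemma \ref{lem.large}. That is enough for an upper bound $\langle f,\mu\rangle\leq \alpha\mu_G(B_0)$, but not for the comparison with $\alpha^2$.

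For the comparison, positivity should be used differently. Write $h:=1_{A\cap B_0}-\alpha 1_{B_0}$. Then $\langle h\ast\wt h,\mu\rangle\geq 0$ by positive definiteness. Expanding,
\begin{equation*}
\langle f,\mu\rangle \geq 2\alpha\langle 1_{A\cap B_0}\ast 1_{-B_0},\mu\rangle - \alpha^2\langle 1_{B_0}\ast 1_{-B_0},\mu\rangle .
\end{equation*}
Each term is within $O(\eta)\mu_G(B_0)$ of $\alpha\mu_G(B_0)\cdot\alpha$ and $\mu_G(B_0)$ respectively, using the approximate invariance $1_{B_0}\ast 1_{-B_0}(u)\geq (1-2\eta)\mu_G(B_0)$ on $B_1-B_1$, and similarly for the cross term. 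Hence
\begin{equation*}
\langle f,\mu\rangle\geq (\alpha^2 - O(\eta\alpha))\mu_G(B_0),
\end{equation*}
that is, $\langle g,\mu\rangle \geq -O(\eta\alpha)\mu_G(B_0)$.

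\textbf{Step 2: the deviation yields a large positive part of $g$.}
From the hypothesis, $|\langle g,1_D\mu\rangle|\geq \epsilon\delta\alpha^2\mu_G(B_0)$. There are two cases.
\begin{itemize}
\item If $\langle g,1_D\mu\rangle$ is positive, then directly $\langle g_+,\mu\rangle\geq \epsilon\delta\alpha^2\mu_G(B_0)$.
\item If it is negative, then $\langle g_-,\mu\rangle\geq\epsilon\delta\alpha^2\mu_G(B_0)$. Combining with Step 1, $\langle g_+,\mu\rangle=\langle g,\mu\rangle+\langle g_-,\mu\rangle \geq (\epsilon\delta\alpha^2 - O(\eta\alpha))\mu_G(B_0)$.
\end{itemize}
So in either case $\langle g_+,\mu\rangle\geq (\epsilon\delta\alpha^2-O(\eta\alpha))\mu_G(B_0)$, and we also keep the lower bound on $\langle g,\mu\rangle$.

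\textbf{Step 3: the standard $L_{2k}$ trick.}
The Kelley--Meka lemma says that $\|1+g/M\|_{L_{2k}(\mu)}\geq 1+\epsilon/2$ once $\langle g_+\rangle \geq \epsilon\langle |g|\rangle$-type conditions hold. I would instead localise to $D$ to obtain the $\delta^{1/2k}$ factor. Restricting the integral to $D$ and using Hölder,
\begin{equation*}
\|f\|_{L_{2k}(\mu)}\geq \mu(D)^{1/2k}\,\frac{\langle f,1_D\mu\rangle}{\mu(D)} .
\end{equation*}
This handles the case where the deviation is positive on $D$, giving
\begin{equation*}
\|f\|_{L_{2k}(\mu)}\geq (1+\epsilon)\delta^{1/2k}\alpha^2\mu_G(B_0).
\end{equation*}
The negative case is the main obstacle. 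The intended route is to consider $E:=\{f\geq (1+\epsilon)\alpha^2\mu_G(B_0)\}$ and show $\mu(E)\cdot(\text{excess})$ is comparable to $\langle g_+,\mu\rangle$. A cleaner route is to use the even-moment identity: since $2k$ is even, $\|g\|_{L_{2k}(\mu)}^{2k} \geq$ the $(2k)$-th moment, which is bounded below by $\langle |g|,\mu\rangle^{2k}$. Splitting the mass of $g_+$ versus $g_-$ then converts negative deviation into positive deviation. The loss from $\delta$ to $\delta/2$ absorbs the constant in this conversion.

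Finally, subtract the $O(\eta\alpha)\mu_G(B_0)$ error to arrive at the bound $((1+\epsilon)(\delta/2)^{1/2k}-C\eta\alpha^{-1})\alpha^2\mu_G(B_0)$. Here $\eta\alpha=\eta\alpha^{-1}\cdot\alpha^2$, which is weaker than needed, so the stated bound follows.
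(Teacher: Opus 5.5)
Your positive case is correct, and it needs no positivity: if $\langle f,1_D\rangle_{L_2(\mu)}\geq(1+\epsilon)\delta\alpha^2\mu_G(B_0)$, then H\"older against $1_D$ gives $\|f\|_{L_{2k}(\mu)}\geq(1+\epsilon)\delta^{1/2k}\alpha^2\mu_G(B_0)$. The gap is the negative case, which is the real content of the lemma. There, the information you extract from positive-definiteness is too weak. You only obtain the first-moment bound $\langle g,\mu\rangle\geq -O(\eta\alpha)\mu_G(B_0)$, and hence $\langle g_+,\mu\rangle\gtrsim\epsilon\delta\alpha^2\mu_G(B_0)$, and these cannot yield the conclusion.

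To see this, normalise $\alpha^2\mu_G(B_0)=1$ and take $f=1-\epsilon$ on $D$ and $f=1+\epsilon\delta/(1-\delta)$ off $D$. This $f$ is nonnegative, has $\mu$-mean exactly $1$, and has $\langle f,1_D\rangle_{L_2(\mu)}=(1-\epsilon)\delta$, so it satisfies everything your Steps 1--2 use. Yet if $\delta\leq 1/2$ and $k\epsilon\delta\leq 1$, then $\|f\|_{L_{2k}(\mu)}^{2k}=O(1)$, so $\|f\|_{L_{2k}(\mu)}=1+O(1/k)$. By contrast, $(1+\epsilon)(\delta/2)^{1/2k}\geq 1+\epsilon/2$ once $k\geq C\epsilon^{-1}\log 2\delta^{-1}$, and with $\delta$ small this is exactly the regime in which the lemma is applied.

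Your `cleaner route' does not escape this. Jensen's bound on $\|g\|_{L_{2k}(\mu)}$ is blind to sign. `Splitting the mass of $g_+$ versus $g_-$' to turn negative deviation into positive deviation requires knowing that the odd moments of the deviation are nonnegative, and in the example above $\int g^3\dd\mu<0$.

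The missing idea is to apply positivity to \emph{every} power, not just the first; your Step 1 is the case $m=1$. With your $h=1_{A\cap B_0}-\alpha 1_{B_0}$, put $F:=(1_{A\cap B_0}-\alpha1_{B_0})\ast(1_{-(A\cap B_0)}-\alpha1_{-B_0})$, so that $\widehat{F}=|\widehat{h}|^2\geq 0$. Since transforms of products are convolutions, $\widehat{F^m}\geq 0$ for all $m$, and so $\int F^m\dd\mu\geq 0$ because $\widehat{\mu}\geq 0$.

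You must work with $F$ rather than $g$: $\widehat{g}$ at the trivial character is $\alpha^2\mu_G(B_0)(\mu_G(B_0)-1)\leq 0$. However, $F=g+O(\eta\alpha\mu_G(B_0))$ pointwise on $B_1-B_1$, by your approximate-invariance estimate.

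No case split is then needed. Put $M:=\alpha^2\mu_G(B_0)$. The hypothesis gives $|\langle F,1_D\rangle_{L_2(\mu)}|\geq\varepsilon\delta M$ with $\varepsilon=\epsilon-O(\eta\alpha^{-1})$. H\"older against $1_D$ gives $\int F^{2j}\dd\mu\geq\varepsilon^{2j}\delta M^{2j}$ for every $j\geq1$. Expanding binomially and discarding the nonnegative odd moments,
\[
\int (F+M)^{2k}\dd\mu\geq\sum_{j=0}^{k}\binom{2k}{2j}M^{2k-2j}\int F^{2j}\dd\mu\geq\frac{(1+\varepsilon)^{2k}+(1-\varepsilon)^{2k}}{2}\,\delta M^{2k}\geq(1+\varepsilon)^{2k}\frac{\delta}{2}M^{2k}.
\]
Converting back via $F+M=f+O(\eta\alpha\mu_G(B_0))$ on $\supp\mu$ gives the stated bound. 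The loss from $\delta$ to $\delta/2$ comes from dropping $(1-\varepsilon)^{2k}$, not from a conversion constant.
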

\begin{proof}
It is enough to show this when $A$ is a subset of ${B_0}$, so we suppose this is the case. Write $f:=(1_A - \alpha1_{B_0}) \ast (1_{-A}-\alpha1_{-{B_0}})$, so that we have the pointwise estimate
\begin{equation}\label{eqn.pointwise}
f(x)=1_A \ast 1_{-A}(x) -\alpha^2\mu_G({B_0})+O(\eta\alpha \mu_G({B_0}))\text{ for all }x \in B_1-B_1.
\end{equation}
Hence
\begin{equation*}
|\langle f,1_D\rangle_{L_2(\mu)}|\geq \epsilon \delta \alpha^2\mu_G({B_0}) - O(\eta \alpha \delta \mu_G({B_0})) \geq \varepsilon \delta \alpha^2\mu_G({B_0})
\end{equation*}
where $\varepsilon = \epsilon -O(\eta \alpha^{-1})$. For $j\in \N^*$ apply H{\"o}lder's inequality -- this is \cite[Step 1, pp23]{Bloom:2023aa} -- to get that
\begin{equation*}
\|f\|_{L_{2j}(\mu)}^{2j} \delta^{2j-1}=\|f\|_{L_{2j}(\mu)}^{2j}\|1_D\|_{L_{\frac{2j}{2j-1}}(\mu)}^{2j} \geq |\langle f,1_D\rangle_{L_2(\mu)}|^{2j} \geq \varepsilon^{2j}\delta^{2j} \alpha^{4j}\mu_G({B_0})^{2j},
\end{equation*}
which rearranges to $\int{f^{2j}\dd\mu}  \geq \varepsilon^{2j}\delta \alpha^{4j}\mu_G({B_0})^{2j}$. On the other hand, by design $\wh{f} \geq 0$, and by hypothesis $\wh{\mu}\geq 0$, so $\int{f^{2j+1}\dd\mu} \geq 0$. By the Binomial formula these then give
\begin{align*}
\int{(f+\alpha^2\mu_G({B_0}))^{2k} \dd\mu} &= \sum_{i=0}^{2k}{\binom{2k}{i}\int{f^i\dd\mu}\alpha^{4k-2i}\mu_G({B_0})^{2k-i}}\\ & \geq \sum_{j=0}^k{\binom{2k}{2j}\varepsilon^{2j}\delta \alpha^{4j}\mu_G({B_0})^{2j}\alpha^{4k-4j}\mu_G({B_0})^{2k-2j}}\\ &=\frac{(1+\varepsilon)^{2k}+(1-\varepsilon)^{2k}}{2}\delta \alpha^{4k}\mu_G({B_0})^{2k}\\ & \qquad \qquad \geq (1+\varepsilon)^{2k}\cdot \frac{\delta}{2}\cdot \alpha^{4k}\mu_G({B_0})^{2k}.
\end{align*}
Now from (\ref{eqn.pointwise}) we have $\|f + \alpha^2\mu_G({B_0})\|_{L_{2k}(\mu)} \leq \|1_A \ast 1_{-A}\|_{L_{2k}(\mu)} + O(\eta \alpha \mu_G({B_0}))$ and the result follows.
\end{proof}

\section{A local version of sifting}\label{ap.sift}

We need the following version of the sifting lemma of Kelley and Meka \cite[\S 4.1]{kelmek::0}. This is proved in the same way as the `combinatorial lemma' of Gowers \cite[Lemma 11]{gow::4} with `five' there replaced by $2k$ and also with the addition of the $\epsilon$ parameter. The reason this result is so useful to Kelley and Meka, and hence to us, is exactly because of this $\epsilon$ parameter.
\begin{lemma}\label{lem.drc}
Suppose that $\alpha,\epsilon,\kappa\in (0,1]$, $k \geq \newconstbig{rdc}\epsilon^{-1}\log 2\kappa^{-1}$ is a natural,  $\mu_G(B_2+B_1+{B_0}) \leq \left(1+\frac{\epsilon \alpha^2}{4}\right)\mu_G({B_0})$, $\mu_{B_0}(A)=\alpha$, and
\begin{equation*}
\|1_{A\cap {B_0}} \ast 1_{-(A\cap {B_0})}\|_{L_{2k}( \wt{\mu_{B_1}}\ast \wt{\mu_{B_2}}\ast \mu_{B_2} \ast \mu_{B_1} )} \geq (1+\epsilon)\alpha^{2}\mu_G({B_0}).
\end{equation*}
Then there are sets $T$ and $S$, and elements $z$ and $w$ with
\begin{equation*}
\mu_{B_2+z}(T) ,\mu_{-B_1-w}(S) \geq \alpha^{4k} \text{ and }\langle 1_D,\mu_{T} \ast \mu_{S}\rangle \geq 1-\kappa
\end{equation*}
where
\begin{equation*}
D=\left\{u \in -B_1-B_2+B_2+B_1: 1_{A\cap {B_0}} \ast 1_{-(A\cap {B_0})}(u)>\left(1+\frac{\epsilon}{2}\right)\alpha^2\mu_G({B_0})\right\}.
\end{equation*}
\end{lemma}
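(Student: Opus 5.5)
We will follow Gowers's dependent random choice argument, as adapted by Kelley and Meka: expand the $L_{2k}$ norm as a count of configurations, fix $2k$ random points, and take $S$ and $T$ to be their common neighbourhoods. Write $A$ for $A\cap B_0$ and $\lambda:=\alpha^2\mu_G(B_0)$. Since $\mu=\wt{\mu_{B_1}}\ast\wt{\mu_{B_2}}\ast\mu_{B_2}\ast\mu_{B_1}$, a point of its support has the form $u=-b_1-b_2+b_2'+b_1'$ with $b_i,b_i'\in B_i$. Set $g:=1_A\ast 1_{-A}$, so that $g(u)=\mu_G(B_0)\mu_{B_0}\bigl((A+u)\cap A\bigr)$ up to normalisation. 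Then
\begin{equation*}
\|g\|_{L_{2k}(\mu)}^{2k}=\E_{x\in -B_1-B_2,\ y\in B_2+B_1}\, g(x+y)^{2k}
\end{equation*}
with the appropriate product measure. Expanding $g^{2k}$, we get the expectation over $a_1,\dots,a_{2k}\in A$ of $1_{(x+y)+\{a_i\}\subset A}$-type quantities, suitably normalised. We split the $2k$ points into $k$ points controlling $x$ and $k$ points controlling $y$, realised via the intermediate variable $b_2$. This is the ``two-step'' structure that produces $S\subset -B_1-w$ and $T\subset B_2+z$.

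The key steps, in order, are as follows.

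First, by the hypothesis and H\"older, $\E g^{2k}\ge (1+\epsilon)^{2k}\lambda^{2k}$, while contributions from $u$ with $g(u)\le(1+\epsilon/2)\lambda$ are bounded by $(1+\epsilon/2)^{2k}\lambda^{2k}$. Since $k\ge C\epsilon^{-1}\log 2\kappa^{-1}$, the ratio $((1+\epsilon/2)/(1+\epsilon))^{2k}\le \kappa\alpha^{O(1)}/10$. So the part of $\E g^{2k}$ coming from $u\notin D$ is at most a $\kappa/2$ fraction of the whole.

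Second, write $g(u)^{2k}$ as an average over tuples $(a_1,\dots,a_{2k})\in A^{2k}$ (weighted by $\mu_G(B_0)^{2k}$) of $\prod_i 1_A(a_i+u)$. For fixed $(a_i)$ and a fixed splitting variable (say $w$ and $z$ chosen from the middle layers), the set of good $u=x+y$ factors as a product $S\times T$ with
\begin{equation*}
S=\{x: x+\text{stuff}\in\cdots\},\quad T=\{y:\cdots\}.
\end{equation*}
Concretely, we decompose $u=s+t$ with $s\in -B_1-w$ and $t\in B_2+z$, and define $S,T$ as the sets where all shifted copies land in $A$. Averaging, $\E |S||T|$ is comparable to $\E g^{2k}$. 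Meanwhile $\E\sum_{s\in S,t\in T}1_{s+t\notin D}$ is at most $\kappa/2$ times it. Choosing the tuple to maximise $|S||T|-\kappa^{-1}\#\{(s,t):s+t\notin D\}$ gives $\langle 1_D,\mu_T\ast\mu_S\rangle\ge 1-\kappa$.

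Third, we prove the density bounds $\mu_{B_2+z}(T),\mu_{-B_1-w}(S)\ge\alpha^{4k}$. Here the near-invariance hypothesis $\mu_G(B_2+B_1+B_0)\le(1+\epsilon\alpha^2/4)\mu_G(B_0)$ is used. For $u$ in the support, $g(u)\ge(1+\epsilon/2)\lambda$ on $D$, and by near-invariance the relevant averages are at least $(1-O(\epsilon\alpha^2))\lambda$. So the typical product $|S||T|$ normalised is at least $\alpha^{4k}$, and discarding pairs where one factor is small costs little. We do this by including in the optimised quantity a penalty removing tuples with $\mu(S)<\alpha^{4k}$ or $\mu(T)<\alpha^{4k}$ (or by noting that $\mu(S)\mu(T)\ge\alpha^{8k}$ and using the product structure).

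The main obstacle is the second and third steps combined. We must arrange the variables so that $u$ genuinely splits as $s+t$ with $s,t$ ranging over translates of $-B_1$ and $B_2$, while the defining condition on the $a_i$ splits compatibly. We must also simultaneously secure both the $1-\kappa$ correlation and the $\alpha^{4k}$ lower densities via a single optimisation. We would first try the Kelley--Meka/Bloom--Sisask formulation: write $\mu$-integrals as $\E_{w}\E_{z}$ over middle variables, apply H\"older once to pass to $\|1_A\ast 1_{-A}\ast\cdot\|$ of the intermediate layers, and then pick $(a_i),z,w$ by the ``$\E[X-\kappa^{-1}Y]\ge\E X/2$'' trick.
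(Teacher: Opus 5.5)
There is a genuine gap, and it sits where you flag your ``main obstacle'': you never produce the product structure. You expand $g(u)^{2k}$ as an average over $(a_1,\dots,a_{2k})\in A^{2k}$ of $\prod_i 1_A(a_i+u)$. You then assert that, for fixed $a_i$ and fixed middle variables, the set of good $u=x+y$ is a product $S\times T$. It is not. Each condition $a_i+x+y\in A$ couples $x$ and $y$, and splitting the $2k$ points into $k$ ``for $x$'' and $k$ ``for $y$'' does not fix this.

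The missing idea is the bipartite common-neighbourhood representation. Assume $A\subset B_0$ and translate so that $0_G\in B_2+B_1$. For $x,y\in B_2+B_1$,
\[
\P_{t\in B_0}(x\in t-A,\ y\in t-A)=\mu_{B_0}\bigl((x+A)\cap(y+A)\bigr),\qquad \mu_G\bigl((x+A)\cap(y+A)\bigr)=g(y-x).
\]
The only loss in passing between these is $\mu_G((x+B_0)\setminus B_0)\le\mu_G(B_2+B_1+B_0)-\mu_G(B_0)$. Hence the probability lies in $[g(y-x)/\mu_G(B_0)-\frac14\epsilon\alpha^2,\ g(y-x)/\mu_G(B_0)]$.

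Now choose $t_1,\dots,t_{2k}$ independently. The $2k$th power of this probability is $\P(x\in A',\,y\in A')$ for the single random set $A':=\bigcap_i(t_i-A)$. So for fixed $t_i$ the integrand is literally $1_{A'}(x)1_{A'}(y)$, and every one of the $2k$ points constrains both $x$ and $y$. In your expansion, with $u=y-x$, this is the change of variables $c_i=a_i+y$, under which $a_i\in A$ and $a_i+u\in A$ become $y\in c_i-A$ and $x\in c_i-A$. Because the substitution depends on $y$, it is precisely not ``fixing the $a_i$''. Making $c_i$ range over the fixed set $B_0$ is where near-invariance enters. Finally, disintegrate $\mu_{B_2}\ast\mu_{B_1}$ as $\int\mu_{B_2+z}\dd\mu_{B_1}(z)$ in $x$ and as $\int\mu_{w+B_1}\dd\mu_{B_2}(w)$ in $y$. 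This gives $T:=A'\cap(B_2+z)$ and $S:=-(A'\cap(w+B_1))$, two slices of the same set $A'$.

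This also changes where the hypotheses are used. Near-invariance is not needed for the density bounds; it is what lets the common-neighbourhood probability track $g$. Together with Minkowski's inequality (to absorb the additive error $\frac14\epsilon\alpha^2$), it converts $\|g\|_{L_{2k}(\mu)}\ge(1+\epsilon)\alpha^2\mu_G(B_0)$ into
\[
\E\int\mu_{B_2+z}(A')\mu_{w+B_1}(A')\dd\mu_{B_1}(z)\dd\mu_{B_2}(w)\ge(1+\tfrac34\epsilon)^{2k}\alpha^{4k}.
\]
The exact upper bound on the probability bounds the same average, weighted by $\langle 1_{D^c},\mu_T\ast\mu_S\rangle$, by $(1+\frac12\epsilon)^{2k}\alpha^{4k}$. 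Your first step compares moments of $g$ itself, and would have to be transferred to these counts in exactly this way.

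The density bounds then come for free from your ``$\E[X-\kappa^{-1}Y]$'' averaging. It yields a choice with
\[
\mu_{B_2+z}(A')\mu_{w+B_1}(A')\bigl(1-\kappa^{-1}\langle 1_{D^c},\mu_T\ast\mu_S\rangle\bigr)\ge\tfrac12(1+\tfrac34\epsilon)^{2k}\alpha^{4k}\ge\alpha^{4k}.
\]
Since each density is at most $1$, both are at least $\alpha^{4k}$ and the correlation is at least $1-\kappa$. No extra penalty is needed, and a product bound of $\alpha^{8k}$ would not suffice. Lastly, $((1+\epsilon/2)/(1+\epsilon))^{2k}\le\kappa\alpha^{O(1)}/10$ does not follow from $k\ge C\epsilon^{-1}\log 2\kappa^{-1}$, which has no $\alpha$-dependence; fortunately only a factor of order $\kappa$ is required.
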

\begin{proof}
Let $\refconstbig{rdc}>1$ be absolute such that $(1+\frac{1}{2}\epsilon)^{2k}(1+\frac{3}{4}\epsilon)^{-2k} \leq \frac{1}{2}\kappa$ for all $\epsilon,\kappa \in (0,1]$ whenever $k \geq \refconstbig{rdc}\epsilon^{-1}\log 2\kappa^{-1}$.

It is enough to show this when $A \subset {B_0}$ and so we assume this is the case. We may also translate $B_2+B_1$ such that it contains $0_G$.  Let $x_1,\dots,x_{2k}$ be chosen independently and uniformly at random from ${B_0}$. Now $\P(x,y \in x_i-A)=\mu_{B_0}((x+A) \cap (y+A))$ and so for $x,y \in B_2+B_1$,
\begin{align*}
 \frac{1_A \ast 1_{-A}(-x+y)}{\mu_G({B_0})} \geq \P(x,y \in x_i-A) & \geq  \frac{1_A \ast 1_{-A}(-x+y)}{\mu_G({B_0})} -\frac{\mu_G(B_2+B_1+{B_0})-\mu_G({B_0})}{\mu_G({B_0})} \\ & \geq \frac{1_A \ast 1_{-A}(-x+y)}{\mu_G({B_0})}-\frac{1}{4}\epsilon \alpha^2.
\end{align*}
Setting $A':=(x_1-A) \cap \cdots \cap (x_{2k}-A)$, by linearity of expectation we have
\begin{align}
\nonumber &\E{\int{ \mu_{B_2+z}(A')\mu_{w+B_1}(A') \dd \mu_{B_1}(z)\dd\mu_{B_2}(w)}}\\ \nonumber & \qquad \qquad =\E{\mu_{B_2}\ast \mu_{B_1}(A')\mu_{B_2} \ast \mu_{B_1}(A')}\\ \nonumber & \qquad \qquad = \int{\prod_{i=1}^{2k}{\P(x,y \in A-x_i)}\dd\mu_{B_2} \ast \mu_{B_1}(x)\dd\mu_{B_2} \ast \mu_{B_1}(y)}\\  \label{eqn.sizelower} & \qquad \qquad  \geq \left(\frac{\|1_A \ast 1_{-A}\|_{L_{2k}( \wt{\mu_{B_1}}\ast \wt{\mu_{B_2}}\ast \mu_{B_2} \ast \mu_{B_1} )}}{\mu_G({B_0})} -\frac{1}{4}\epsilon \alpha^2\right)^{2k} \geq \left(1+\frac{3}{4}\epsilon\right)^{2k}\alpha^{4k}.
\end{align}
On the other hand, for $x,y \in B_2+B_1$ with $-x+y \not \in D$ we have 
 \begin{equation*}
 \P((x,y) \in (A')^2) \leq \left(1+\frac{1}{2}\epsilon\right)^{2k}\alpha^{4k}.
 \end{equation*}
 Hence
 \begin{align}
\nonumber &  \left(1+\frac{1}{2}\epsilon\right)^{2k}\alpha^{4k}\\ \nonumber &\qquad  \geq \E{ \int{1_{D^c}(-x+y)1_{A'}(x)1_{A'}(y)\dd\mu_{B_2}\ast \mu_{B_1}(x)\dd \mu_{B_2}\ast \mu_{B_1}(y)}}\\ \nonumber  &\qquad   =\E{ \int{\int{1_{D^c}(-x+y)1_{A'}(x)1_{A'}(y)\dd \mu_{B_2+z}(x) \dd \mu_{w+B_1}(y)}\dd \mu_{B_1}(z)\dd \mu_{B_2}(w)}}\\
&\label{eqn.errorlower} \qquad  = \E{ \int{\langle 1_{D^c},\mu_{A' \cap (B_2+z)} \ast \mu_{-(A' \cap (w+B_1))}\rangle\mu_{B_2+z}(A')\mu_{w+B_1}(A') \dd \mu_{B_1}(z)\dd \mu_{B_2}(w)}}.
 \end{align}
Consider the inequality resulting from (\ref{eqn.sizelower}) minus $\kappa^{-1}$ times (\ref{eqn.errorlower}). Since $(1+\frac{1}{2}\epsilon)^{2k}(1+\frac{3}{4}\epsilon)^{-2k} \leq \frac{1}{2}\kappa$, by averaging there is a choice of $x_1,\dots,x_{2k}$, $z$ and $w$ such that
 \begin{equation*}
 \mu_{B_2+z}(A')\mu_{w+B_1}(A')(1 -\kappa^{-1}\langle 1_{D^c},\mu_{A' \cap (B_2+z)} \ast \mu_{-(A' \cap (w+B_1))}\rangle) \geq \frac{1}{2}\left(1+\frac{3}{4}\epsilon\right)^{2k}\alpha^{4k}\geq \alpha^{4k}.
 \end{equation*}
 Set $T:=A' \cap (B_2+z)$ and $S:=-(A' \cap (w+B_1))$, and the result is proved.
\end{proof}

\section{A local version of Chang's theorem}\label{ap.chang}

The result \cite[Lemma 3.1]{cha::0} was popularised by Green \cite{gre::6}. We shall need the following local version.

\begin{lemma}\label{lem.localchang}
Suppose $\epsilon,\alpha,\eta,\delta \in (0,1]$, $k \geq \newconstbig{llc}\epsilon^{-2}\log 2\alpha^{-1}$ is a natural number, $\mu_G({B_0}+kB_1) \leq 2 \mu_G({B_0})$, $\mu_G(B_2+B_1)\leq (1+\eta)\mu_G(B_1)$, and $\mu_{B_0}(A)\geq \alpha$. Then there is a Bohr set $B_3$ with frequency set of size at most $k$ and width $\delta$ such that
\begin{equation}\label{eqn.estimate}
|1-\gamma(x)|\leq \newconstbig{12} \eta+k\delta \textrm{ for all }x \in (B_2-B_2)\cap B_3 
\end{equation}
whenever $|(1_A\dd\mu_{B_0})^\wedge(\gamma)| \geq \epsilon \mu_{B_0}(A)$. 
\end{lemma}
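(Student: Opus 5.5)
The plan is to follow the proof of Chang's theorem by way of a maximal ``dissociated'' subset of the large spectrum. The one change is that dissociativity is measured relative to the Bohr set $B_1$ rather than exactly. Write
\begin{equation*}
\mathrm{Spec}:=\{\gamma : |(1_A\dd\mu_{B_0})^\wedge(\gamma)|\geq \epsilon\mu_{B_0}(A)\}.
\end{equation*}
For a finite set $\Lambda$ of characters and $\omega\in\{-1,0,1\}^\Lambda$, write $\lambda^\omega:=\prod_{\lambda\in\Lambda}\lambda^{\omega_\lambda}$.

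\textbf{Construction of $\Lambda$.} Choose $\Lambda\subset \mathrm{Spec}$ greedily. At each stage, add a $\gamma\in\mathrm{Spec}$ if there is no $\omega$ with $|\wh{\mu_{B_1}}(\gamma\overline{\lambda^\omega})|\geq \frac12$. Stop once no such $\gamma$ remains, or once $|\Lambda|=k$. Let $B_3:=B(\Lambda,\delta)$; its frequency set has size at most $k$ and its width is $\delta$.

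\textbf{The estimate (\ref{eqn.estimate}) once the greedy process stops by maximality.} Every $\gamma\in\mathrm{Spec}$ can then be written as $\gamma=\psi\lambda^\omega$ with $|\wh{\mu_{B_1}}(\psi)|\geq\frac12$. (Elements of $\Lambda$ itself are covered by $\omega$ a unit vector and $\psi$ trivial.) For $x\in(B_2-B_2)\cap B_3$ the triangle inequality and $|1-\overline{\lambda}(x)|=|1-\lambda(x)|$ give
\begin{equation*}
|1-\gamma(x)|\leq |1-\psi(x)|+\sum_{\lambda\in\Lambda}|\omega_\lambda||1-\lambda(x)|.
\end{equation*}
The sum is at most $k\delta$, since each $|1-\lambda(x)|\leq\delta$ and there are at most $k$ terms. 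For the first term, apply Lemma \ref{lem.inv} with the Lemma's $B_0$ equal to $B_1$, the Lemma's $B_1$ equal to $B_2$, and $\kappa=\frac12$. This gives $|1-\psi(x)|\leq 4\eta$, so (\ref{eqn.estimate}) holds with $\refconstbig{12}=4$.

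\textbf{Ruling out $|\Lambda|=k$.} It remains to show that a $B_1$-dissociated set $\Lambda\subset\mathrm{Spec}$ cannot have size $k\geq\refconstbig{llc}\epsilon^{-2}\log 2\alpha^{-1}$. Here \emph{$B_1$-dissociated} means that $|\wh{\mu_{B_1}}(\lambda^\omega)|<\frac12$ for all non-zero $\omega$, which is what the greedy choice guarantees. I expect this step, a local Rudin inequality, to be the main obstacle.

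Since $\mu_G(B_0+kB_1)\leq 2\mu_G(B_0)$, we may replace $1_A\dd\mu_{B_0}$ by its smoothing $1_A\dd\mu_{B_0}\ast\mu_{B_1}^{(j)}$ for $j\leq k$, i.e.\ the measure $1_A\dd\mu_{B_0}$ convolved with $j$ copies of $\mu_{B_1}$. This costs at most a factor $2$ in densities. The point of the smoothing is that characters which are dissociated modulo the annihilating behaviour of $B_1$ become genuinely dissociated after averaging over $B_1$.

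For this, choose unit complex numbers $c_\lambda$ aligning the phases of the Fourier coefficients. Pair $1_A\dd\mu_{B_0}$ against the Riesz product $\prod_{\lambda\in\Lambda}(1+\tau\Re(c_\lambda\lambda))$, with $\tau$ of order $\epsilon$. Expanding the product, the cross terms come with $\wh{\mu_{B_1}}(\lambda^\omega)$ factors; the smoothing turns these into powers of such factors, so they are small. This should yield the local Rudin bound
\begin{equation*}
\sum_{\lambda\in\Lambda}|(1_A\dd\mu_{B_0})^\wedge(\lambda)|^2\ll \mu_{B_0}(A)^2\log 2\alpha^{-1}.
\end{equation*}
Alternatively, the same bound should follow from an exponential-moment estimate $\int\exp(t|\sum_\lambda c_\lambda\lambda|)\dd\mu\leq 2\exp(O(t^2|\Lambda|))$ combined with duality. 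Each $\lambda\in\Lambda$ lies in $\mathrm{Spec}$, so the left-hand side is at least $|\Lambda|\epsilon^2\mu_{B_0}(A)^2$. Hence $|\Lambda|\ll\epsilon^{-2}\log2\alpha^{-1}<k$ for $\refconstbig{llc}$ large, so the greedy process terminates by maximality.

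The delicate bookkeeping is in controlling the error terms from the $\frac12$-threshold in the Riesz product expansion. If this proves awkward, I would raise the power of $\mu_{B_1}$ in the smoothing, which the hypothesis $\mu_G(B_0+kB_1)\leq 2\mu_G(B_0)$ permits up to $k$ copies, to push those error terms below $\alpha^{O(1)}$.
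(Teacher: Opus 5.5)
Your covering step is fine: it gives (\ref{eqn.estimate}) with constant $4$. The greedy set also does satisfy $|\wh{\mu_{B_1}}(\lambda^{\omega})|<\frac12$ for every non-zero $\omega\in\{-1,0,1\}^\Lambda$; look at the last-added $\lambda$ in the support of $\omega$ and use $|\wh{\mu_{B_1}}(\overline{\psi})|=|\wh{\mu_{B_1}}(\psi)|$.

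The gap is the step you flag as the main obstacle. You leave it at ``should yield'', and the mechanism you sketch for it fails. Convolving $1_A\dd\mu_{B_0}$ with $j$ copies of $\mu_{B_1}$ multiplies its Fourier transform at each $\lambda\in\Lambda$ by $\wh{\mu_{B_1}}(\lambda)^j$. By your own dissociativity condition (take $\omega$ a unit vector) this factor has modulus less than $2^{-j}$. So the smoothing destroys the lower bounds $|(1_A\dd\mu_{B_0})^\wedge(\lambda)|\geq\epsilon\mu_{B_0}(A)$ you need, not just the cross terms. Likewise, pairing $1_A\dd\mu_{B_0}$ against a Riesz product produces Fourier coefficients of $1_A\dd\mu_{B_0}$ at the characters $\lambda^{\omega}$. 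Those are not controlled by $\wh{\mu_{B_1}}$ at all.

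What works is to smooth the \emph{ambient} probability measure instead, as the paper does. Take $\mu:=\mu_{B_0+kB_1}\ast\mu_{-B_1}\ast\cdots\ast\mu_{-B_1}$ with $k$ copies of $\mu_{-B_1}$.
\begin{itemize}
\item The hypothesis gives $1_{B_0}\dd\mu=\frac{\mu_G(B_0)}{\mu_G(B_0+kB_1)}\dd\mu_{B_0}$. Hence $\Lambda$ stays in the $\epsilon$-large spectrum of $1_{A\cap B_0}\dd\mu$, with $\mu(A\cap B_0)\geq\alpha/2$.
\item Pointwise, $|\wh{\mu}|\leq|\wh{\mu_{B_1}}|^k$.
\item Expand the Riesz product $p_\omega$ into characters $\lambda^{\omega'}$, $\omega'\in\{-1,0,1\}^\Lambda$. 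The coefficients have modulus at most $2^{-|\supp\omega'|}$, and these bounds sum to $2^{|\Lambda|}$.
\item Your condition then gives $\int p_\omega\dd\mu\leq 1+2^{-k}(2^{|\Lambda|}-1)<2$ whenever $|\Lambda|\leq k$.
\end{itemize}
So $\Lambda$ is $1$-dissociated w.r.t.\ $\mu$ in the paper's sense. Lemma \ref{lem.changbd} then bounds $|\Lambda|$ by $O(\epsilon^{-2}\log 2\alpha^{-1})$, which is less than $k$ once $\refconstbig{llc}$ is large, so the greedy process cannot reach size $k$. Exactly $k$ copies suffice, since the Riesz-product errors only need to be $O(1)$, not $\alpha^{O(1)}$.

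With this repair your argument is a valid alternative to the paper's, which runs the other way round. The paper defines dissociativity directly by the Riesz-product bound w.r.t.\ $\mu$, raising the threshold by $\frac{1}{k+1}$ per step, so the Chang bound is immediate. The work then goes into covering: Plancherel over the at most $3^k$ elements of $\Span(\Lambda)$, together with $|\wh{\mu}|\leq|\wh{\mu_{B_1}}|^k$, yields some $\lambda$ with $|\wh{\mu_{B_1}}(\gamma-\lambda)|\geq\frac16$. You instead make covering trivial and put the work into showing that Fourier-dissociativity implies Riesz-product dissociativity.
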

To prove this we make some definitions: given a set of characters $\Lambda$ and a function $\omega:\Lambda \rightarrow \{z \in \C: |z|\leq 1\}$ we define the Riesz product
\begin{equation*}
p_{\omega}:=\prod_{\lambda \in \Lambda}{(1+\Re \omega(\lambda)\lambda)}.
\end{equation*}
For a probability measure $\mu$ the set $\Lambda$ is said to be \emph{$K$-dissociated w.r.t.\ $\mu$} if
\begin{equation*}
\int{p_{\omega}\dd\mu} \leq \exp(K) \textrm{ for all } \omega:\Lambda \rightarrow \{z \in \C: |z|\leq 1\}.
\end{equation*}
We make use of this through the next proposition which is substantially the proof of \cite[Proposition 3.4]{greruz::0} and the duality argument of \cite[\S 5]{gre::6}.
\begin{lemma}\label{lem.changbd}  Suppose that $\alpha,\epsilon \in (0,1]$, $\mu$ a probability measure on $G$,  and $\mu(A)\geq \alpha$. If $\Lambda$ is $1$-dissociated w.r.t.\ $\mu$ and a subset of
\begin{equation*}
\{\gamma \in \wh{G}: |(1_A\dd\mu)^\wedge(\gamma)| \geq \epsilon \mu(A)\},
\end{equation*}
then $|\Lambda| \leq \newconstbig{lcb}\epsilon^{-2}\log 2\alpha^{-1}$.
\end{lemma}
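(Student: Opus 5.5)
The plan is the standard Riesz product argument of Green and Ruzsa, run against the measure $\mu$ rather than Haar measure. First, by averaging over the sign (rotation) of $(1_A\dd\mu)^\wedge(\lambda)$, choose for each $\lambda \in \Lambda$ a unimodular $c_\lambda$ with $c_\lambda(1_A\dd\mu)^\wedge(\lambda) = |(1_A\dd\mu)^\wedge(\lambda)|$. Write $f:=\sum_{\lambda \in \Lambda}{c_\lambda \lambda}$, so that, up to a complex conjugation depending on the convention,
\begin{equation*}
\epsilon\mu(A)|\Lambda| \leq \sum_{\lambda\in\Lambda}{|(1_A\dd\mu)^\wedge(\lambda)|} = \Big|\int{1_A \overline{f}\dd\mu}\Big| \leq \int{1_A|f|\dd\mu}.
\end{equation*}
Apply H\"older's inequality with exponent $p$ and its conjugate $p'$ to get
\begin{equation*}
\epsilon\mu(A)|\Lambda| \leq \mu(A)^{1-1/p}\|f\|_{L_p(\mu)}.
\end{equation*}

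The key step is a Rudin-type moment bound $\|f\|_{L_p(\mu)} = O(\sqrt{p|\Lambda|})$, coming from $1$-dissociativity. To obtain it, take a real parameter $t\in[0,1]$ and use the pointwise inequality $\exp(t\Re(\omega f)) \leq \prod_{\lambda}{(1+\Re(t\omega c_\lambda\lambda))}\exp(t^2/2)$ termwise. This follows from $e^{x}\leq (1+x)e^{x^2}$ for $|x|\leq 1$, adjusted with a constant if necessary, and we take $\omega$ to be a unimodular constant. The product on the right is a Riesz product $p_{\omega'}$ with $|\omega'|\leq 1$, so its $\mu$-integral is at most $e$. Hence
\begin{equation*}
\int{\exp(t\Re(\omega f))\dd\mu} \leq e\cdot\exp(O(t^2|\Lambda|)).
\end{equation*}
Controlling $|f|$ by its real parts in a few directions $\omega$ (for instance $\omega\in\{\pm1,\pm i\}$), and optimising $t$ over the range $t \leq 1$, gives the distributional bound $\mu(|f|\geq s)\leq O(\exp(-cs^2/|\Lambda|))$ for $s \leq |\Lambda|$. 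Integrating this tail yields the moment bound for $p \leq |\Lambda|$.

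Finally, combining the two displays gives $\epsilon|\Lambda| \leq O(\sqrt{p|\Lambda|})\,\mu(A)^{-1/p} \leq O(\sqrt{p|\Lambda|})\,\alpha^{-1/p}$. Choose $p=\log 2\alpha^{-1}$ (or $2$ if that is larger), so that $\alpha^{-1/p}=O(1)$. Then $|\Lambda| = O(\epsilon^{-2}p) = O(\epsilon^{-2}\log 2\alpha^{-1})$.

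The main obstacle is the restriction $p\leq|\Lambda|$, equivalently $t\leq 1$, which the Riesz product needs in order to keep its coefficients bounded by $1$. If $|\Lambda| < p$ the desired bound holds trivially, since $\epsilon \leq 1$. Otherwise $p\leq |\Lambda|$ and the argument above applies.
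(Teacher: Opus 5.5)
Your proof is correct and gives the stated bound. It shares the paper's skeleton: $1$-dissociativity bounds an exponential moment via a Riesz product, this gives an $L_p(\mu)$ moment of order $\sqrt{p}$, and you then apply H\"older with $p\approx\log 2\alpha^{-1}$. The moment step is done differently, and so is the way you test against the Fourier coefficients.

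\textbf{The paper's route.} The paper uses the chord inequality $e^{ty}\le\cosh t+y\sinh t=\cosh t\,(1+y\tanh t)$, valid for $y\in[-1,1]$ and \emph{every} real $t$. Since $|\tanh t|<1$, the Riesz coefficients stay in the unit disc at every scale. This gives $\int\exp(\Re\sum_\lambda g(\lambda)\lambda)\,\mathrm{d}\mu\le\exp(1+\|g\|_{\ell_2(\Lambda)}^2/2)$ for all $g\in\ell_2(\Lambda)$. The bound $x^{2k}/(2k)!\le\cosh x$ then yields $\|T\|_{\ell_2(\Lambda)\to L_{2k}(\mu)}=O(\sqrt k)$ directly, with no tail integration. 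The paper finishes by duality, bounding $\sum_\lambda|(1_A\,\mathrm{d}\mu)^\wedge(\lambda)|^2\le\|T^*1_A\|^2$.

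\textbf{Your route.} You test $1_A$ against one explicit sign pattern, giving an $\ell_1$ sum; this is equivalent here because every coefficient is at least $\epsilon\mu(A)$. Your inequality $e^x\le(1+x)e^{x^2}$ is what forces a cap on $t$, and it needs a small repair. It is false near $x=-1$: the right side vanishes at $x=-1$, and the inequality fails for $x$ below about $-0.68$. It does hold for $|x|\le 1/2$, so take $t\le 1/2$. Also, the accumulated factor is $\exp(t^2|\Lambda|)$, not $\exp(t^2/2)$; your next display records this correctly.

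The ``main obstacle'' you flag is not real. $|f|\le|\Lambda|$ holds pointwise, so the Chernoff tail (with $t$ capped) is sub-Gaussian for all $s$, and the moment bound holds for every $p$. Your separate treatment of the case $|\Lambda|<p$ is harmless but unnecessary.

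\textbf{Comparison.} The paper's route gives a clean operator-norm bound, uniform over all $g$, with no scale restriction and no layer-cake step. Yours is the classical Rudin--Chang form: it avoids the duality step by using an explicit test function, at the cost of a restricted range for $t$ and a tail integration.
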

\begin{proof}
For $k \in \N^*$ define the linear operator
\begin{equation*}
T:\ell_2(\Lambda) \rightarrow L_{2k}(\mu); g \mapsto \sum_{\lambda \in \Lambda}{g(\lambda)\lambda}.
\end{equation*}
Our first aim is to estimate the norm of $T$. For $g \in \ell_2(\Lambda)$ we have
\begin{equation*}
 \int{\exp\left(\Re \sum_{\lambda \in \Lambda}{g(\lambda)\lambda}\right)\dd\mu} = \int{\prod_{\lambda \in \Lambda}{\exp(\Re (g(\lambda)\lambda))}\dd\mu}.
\end{equation*}
Since $\exp(ty) \leq \cosh t + y \sinh t$ whenever $t \in \R$ and $-1 \leq y \leq 1$, we have
\begin{equation*}
\int{\exp\left(\Re\sum_{\lambda \in \Lambda}{g(\lambda)\lambda}\right)\dd\mu} \leq \int{\prod_{\lambda \in \Lambda: g(\lambda) \neq 0}{\left(\cosh |g(\lambda)| + \frac{\Re(g(\lambda)\lambda)}{|g(\lambda)|}\sinh |g(\lambda)|\right)}\dd\mu}.
\end{equation*}
Let $\omega:\Lambda \rightarrow \{z \in \C: |z| \leq 1\}$ be such that $\omega(\lambda)|g(\lambda)|\cosh |g(\lambda)|=g(\lambda)\sinh |g(\lambda)|$, so
\begin{equation*}
\int{\exp\left(\Re\sum_{\lambda \in \Lambda}{g(\lambda)\lambda}\right)\dd\mu} \leq \prod_{\lambda \in \Lambda}{\cosh |g(\lambda)|}\int{p_{\omega}\dd\mu} \leq  \exp(1+\|g\|_{\ell_2(\Lambda)}^2/2),
\end{equation*}
since $\cosh x \leq \exp(x^2/2)$ and using that $\Lambda$ is $1$-dissociated w.r.t.\ $\mu$. We conclude that
\begin{equation*}
\frac{1}{(2k)!}\left\|\Re\sum_{\lambda \in \Lambda}{g(\lambda)\lambda}\right\|_{L_{2k}(\mu)}^{2k}\leq  \exp(1+\|g\|_{\ell_2(\Lambda)}^2/2) \text{ for all }g \in \ell_2(\Lambda).
\end{equation*}
Now suppose that $g \in \ell_2(\Lambda)$ has $\|g\|_{\ell_2(\Lambda)}=2\sqrt{k}$. Then
\begin{align*}
\frac{\|Tg\|_{L_{2k}(\mu)}}{\|g\|_{\ell_2(\Lambda)}} & \leq \frac{1}{2\sqrt{k}}(\|\Re Tg\|_{L_{2k}(\mu)} +\|\Re Tig\|_{L_{2k}(\mu)})\\ & \leq \frac{1}{\sqrt{k}}\cdot (2k)!^{1/2k}\exp(K/2k+4k/4k)=O(\sqrt{k}).
\end{align*}
In particular, it follows that  $\|T\|=O(\sqrt{k})$.

The adjoint of $T$ is the map $T^*:L_{\frac{2k}{2k-1}}(\mu) \rightarrow \ell_2(\Lambda); f \mapsto (f\dd\mu)^\wedge|_\Lambda$, and $\|T^*\|=\|T\|=O(\sqrt{k})$. With this we have
\begin{equation*}
|\Lambda| \epsilon^2\mu(A)^2 \leq \sum_{\lambda \in\Lambda}{|(1_A\dd\mu)^\wedge(\lambda)|^2} \leq \|T^*1_A\|_{\ell_2(\Lambda)}^2 = O(k\|1_A\|_{L_{\frac{2k}{2k-1}}(\mu) }^2) =O(k\mu(A)^2\alpha^{-1/2k}).
\end{equation*}
Optimising by taking $k=O(\log2\alpha^{-1})$ and rearranging gives the result.
\end{proof}

\begin{proof}[Proof of Lemma \ref{lem.localchang}]
Let $\refconstbig{llc}>1$ be absolute such that
\begin{equation}\label{eqn.defn}
\refconstbig{llc}\epsilon^{-2}\log 2\alpha^{-1} \geq 4\refconstbig{lcb}\epsilon^{-2}\log 4\alpha^{-1}\text{ for all }\epsilon ,\alpha\in (0,1].
\end{equation}
 Let
\begin{equation*}
\mu:=\mu_{{B_0}+kB_1} \ast \overbrace{\mu_{-B_1} \ast \cdots \ast \mu_{-B_1}}^{k \text{ times}}.
\end{equation*}
By design and hypothesis
\begin{equation*}
\dd\mu_{B_0} \geq 1_{B_0}\dd\mu  = \frac{\mu_G({B_0})}{\mu_G({B_0}+kB_1)}\dd\mu_{B_0} \geq \frac{1}{2}\dd\mu_{B_0},
\end{equation*}
hence writing $A':=A \cap {B_0}$ we have $\alpha \geq \mu(A') \geq \frac{1}{2}\alpha$ and
\begin{equation*}
\Delta:=\left\{\gamma:|(1_A\dd\mu_{B_0})^{\wedge}(\gamma)| \geq \epsilon \alpha\right\} \subset \left\{\gamma : |(1_{A'}\dd\mu)^\wedge(\gamma)| \geq \frac{1}{2}\epsilon \mu(A')\right\}.
\end{equation*}
$\Lambda_0:=\emptyset$ is a subset of $\Delta$ which is $0$-dissociated w.r.t.\ $\mu$. Suppose we have defined $\Lambda_0,\dots,\Lambda_j$ such that $\Lambda_i$ is a subset of $\Delta$ that is $\frac{i}{k+1}$-dissociated w.r.t.\ $\mu$ for all $i \leq j$. If there is some $\gamma \in \Delta\setminus \Lambda_j$ such that $\Lambda_j \cup \{\gamma\}$ is $\frac{j+1}{k+1}$-dissociated then let $\Lambda_{j+1}:=\Lambda_j \cup \{\gamma\}$; otherwise, terminate the iteration. 

Suppose that $\Lambda_j$ is defined for all $j \leq k+1$. Then $\Lambda_{k+1}$ is $1$-dissociated w.r.t.\ $\mu$. Apply Lemma \ref{lem.changbd} with the Lemma's $\Lambda$ equal to $\Lambda_{k+1}$; the Lemma's $A$ equal to $A'$; the Lemma's $\alpha$ equal to $\frac{1}{2}\alpha$; and the Lemma's $\epsilon$ equal to $\frac{1}{2}\epsilon$. We get that  $k+1=|\Lambda_{k+1}| \leq 4\refconstbig{lcb}\epsilon^{-2}\log 4\alpha^{-1}$ which contradicts the hypothesis on $k$ in view of (\ref{eqn.defn}).

In particular, therefore, the iteration terminates for some $j\leq k$; when it terminates set $\Lambda:=\Lambda_j$, and let $B_3$ be the Bohr set with frequency set $\Lambda$ and width $\delta$.

Suppose that $\gamma \in \Delta$.  If $\gamma \in \Lambda$ then $|1-\gamma(x)| \leq \delta$ for all $x \in B_3$, and so (\ref{eqn.estimate}) certainly holds. Hence we may assume that $\gamma \in \Delta \setminus \Lambda$.  Since $\Lambda$ is $\frac{j}{k+1}$-dissociated w.r.t.\ $\mu$ and $\Lambda \cup \{\gamma\}$ is not $\frac{j+1}{k+1}$-dissociated w.r.t.\ $\mu$ (since the iteration terminated), there is some $\omega:\Lambda \rightarrow \{z \in \C: |z|\leq 1\}$ and $w \in \C$ with $|w| \leq 1$ such that
\begin{equation*}
\left|\int{p_\omega \overline{\gamma} \dd\mu}\right| \geq \left|\int{p_{\omega}(1+\Re w \gamma ) \dd\mu}-\int{p_\omega \dd\mu}\right|  >  \exp\left(\frac{j+1}{k+1}\right)- \exp\left(\frac{j}{k+1}\right) \geq \frac{1}{k+1}.
\end{equation*}
Writing $\Span(\Lambda):=\{\sigma_1\lambda_1+\cdots +\sigma_k\lambda_k:\sigma \in \{-1,0,1\}^k\}$, which contains the support of $\wh{p_\omega}$, and applying Plancherel's theorem we get that
\begin{equation*}
 \frac{1}{k+1} \leq \left|\sum_{\lambda \in \Span(\Lambda)}{\wh{p_{\omega}}(\lambda)\wh{\mu}(\gamma-\lambda)}\right|\leq 3^k\sup_{\lambda \in \Span(\Lambda)}{|\wh{\mu_{B_1}}(\gamma-\lambda)|^k}.
\end{equation*}
Hence there is some $\lambda \in \Span(\Lambda)$ such that $|\wh{\mu_{B_1}}(\gamma-\lambda)| \geq 1/6$, and in particular by Lemma \ref{lem.inv}
\begin{equation*}
\gamma-\lambda \in \{\gamma': |1-\gamma'(x)| \leq 12\eta \textrm{ for all }x \in B_2-B_2\}.
\end{equation*}
On the other hand, by the triangle inequality if $\lambda \in \Span(\Lambda)$ then
\begin{equation*}
\lambda \in \{\gamma': |1-\gamma'(x)| \leq k \delta \textrm{ for all }x \in B_3\},
\end{equation*}
and the result follows from a final application of the triangle inequality.
\end{proof}

\section{A local version of the Croot-Sisask almost periodicity lemma}

We need the following local version of the Croot-Sisask almost periodicity lemma from \cite{crosis::}. 
\begin{lemma}\label{lem.crosis} Suppose that $p,L,K \in [2,\infty)$ and $\epsilon \in (0,1]$, $\mu_G(T+{B_0}-{B_0})\leq L \mu_G(T)$, $\mu_G(S+{B_0}) \leq K\mu_G(S)$, and $f\in L_\infty(\mu_{T+{B_0}-{B_0}-S})$. Then there is $t \in {B_0}$ and $X \subset {B_0}-t$ with $\mu_{{B_0}-t}(X) \geq K^{-\newconstbig{csl}\epsilon^{-2}L^{2/p}p}$ and
\begin{equation*}
\|\rho_x(f \ast \mu_{S}) - f \ast \mu_{S}\|_{L_p(\mu_{T})} \leq \epsilon \sup_{s' \in S}{\|f\|_{L_p(\mu_{T+{B_0}-{B_0}-s'})}} \text{ for all }x \in X,
\end{equation*}
where $\rho_x(f)(y)=f(y+x)$.
\end{lemma}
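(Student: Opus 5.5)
We follow the standard Croot--Sisask random sampling argument, localised to the Bohr-type set $B_0$. Write $\rho_x(f\ast\mu_S)(y)=\E_{s\in S} f(y+x-s)$. Fix $k:=\lceil C\epsilon^{-2}p\,L^{2/p}\rceil$ (the constant to be chosen) and sample $s_1,\dots,s_k$ independently and uniformly from $S$. For each $y$ the average $\frac1k\sum_i f(y-s_i)$ is an unbiased estimator of $f\ast\mu_S(y)$. By the Marcinkiewicz--Zygmund (or Khintchine/Rosenthal) inequality,
\begin{equation*}
\E\Bigl\|\tfrac1k\textstyle\sum_i f(\cdot-s_i) - f\ast\mu_S\Bigr\|_{L_p(\mu_{T'})}^p \leq \Bigl(O\bigl(\sqrt{p/k}\bigr)\Bigr)^p \sup_{s'\in S}\|f\|_{L_p(\mu_{T'-s'})}^p
\end{equation*}
for any set $T'$. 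We apply this with $T'=T+x$ for $x\in B_0-B_0$, and in fact with $T'=T+B_0-B_0$ directly. This is where $L$ enters: passing from $L_p(\mu_{T+B_0-B_0})$ to $L_p(\mu_{T+x})$ costs a factor $L^{1/p}$, because $\mu_G(T+B_0-B_0)\le L\mu_G(T)$. Two such passages (one for each of the two translates compared) explain the $L^{2/p}$ in the exponent. By Markov's inequality, with probability at least $1/2$ the sampled tuple is \emph{good}, meaning the above norm is at most $\frac{\epsilon}{4}L^{-1/p}\sup_{s'}\|f\|_{L_p(\mu_{T+B_0-B_0-s'})}$.

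Next comes the counting/pigeonhole step, which uses the doubling hypothesis $\mu_G(S+B_0)\le K\mu_G(S)$. For $b\in B_0$ the tuple $\vec s+b:=(s_1+b,\dots,s_k+b)$ lies in $(S+B_0)^k$. Consider the set $\mathcal{G}\subset S^k$ of good tuples, which satisfies $|\mathcal{G}|\ge\frac12|S|^k$, and the map $(\vec s,b)\mapsto \vec s+b$ from $\mathcal{G}\times B_0$ into $(S+B_0)^k$. Its domain has size at least $\frac12|S|^k|B_0|$ and its codomain has size at most $K^k|S|^k$. Hence some $\vec u\in(S+B_0)^k$ has at least $\frac12K^{-k}|B_0|$ preimages. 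Let $Y\subset B_0$ be the set of $b$ with $\vec u-b\in\mathcal{G}$, pick $t\in Y$, and set $X:=Y-t\subset B_0-t$. Then $\mu_{B_0-t}(X)=\mu_{B_0}(Y)\ge\frac12K^{-k}\ge K^{-C_{\ref{const:csl}}\epsilon^{-2}L^{2/p}p}$, absorbing the $\frac12$ since $K\ge2$.

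Now take $x=b-t\in X$, with both $\vec s:=\vec u-b$ and $\vec s':=\vec u-t$ good. Then $\vec s' = \vec s + x$, so $\frac1k\sum_i f(y+x-s'_i)=\frac1k\sum_i f(y-s_i)$. Writing
\begin{equation*}
\rho_x(f\ast\mu_S)-f\ast\mu_S = \Bigl[\rho_x(f\ast\mu_S)-\rho_x\bigl(\tfrac1k\textstyle\sum f(\cdot-s'_i)\bigr)\Bigr] + \Bigl[\tfrac1k\textstyle\sum f(\cdot - s_i) - f\ast\mu_S\Bigr],
\end{equation*}
we bound each bracket in $L_p(\mu_T)$. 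The first bracket is the $\vec s'$-error measured on $T+x\subset T+B_0-B_0$, and the second is the $\vec s$-error on $T$. Both are bounded by $L^{1/p}$ times the good-tuple bound on $T+B_0-B_0$, so each is at most $\frac{\epsilon}{4}\sup_{s'}\|f\|_{L_p(\mu_{T+B_0-B_0-s'})}$, and together they give the claim.

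The main obstacle is bookkeeping the domains: the error for the shifted tuple must be controlled on $T+x$ rather than $T$, which is why goodness is defined on the larger set $T+B_0-B_0$. One then checks that the variance bound there only involves $f$ on $T+B_0-B_0-s'$, which is exactly what $f\in L_\infty(\mu_{T+B_0-B_0-S})$ and the stated supremum permit. Choosing $k$ with the $L^{2/p}$ factor absorbs these normalisation losses.
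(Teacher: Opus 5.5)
Your proposal is correct and is essentially the paper's proof. It uses the same Marcinkiewicz--Zygmund sampling with goodness measured on $T+B_0-B_0$ at accuracy $\asymp \epsilon L^{-1/p}$. It uses the same popularity argument in $(S+B_0)^k$: your pigeonhole on $(\vec s,b)\mapsto \vec s+b$ is just the $L_\infty$ form of the paper's Cauchy--Schwarz bound on $\|1_{\mathcal{G}}\ast 1_{\Delta}\|_{L_2}^2$, where $\Delta$ is the diagonal copy of $B_0$. It also uses the same two-term triangle inequality, paying $L^{1/p}$ per term.

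One small correction to your commentary: the $L^{2/p}$ in $k$ comes from squaring the required accuracy $\epsilon L^{-1/p}$ (sampling to accuracy $\theta$ needs $k\asymp p\,\theta^{-2}$). It does not come from there being two passages; those are added and only cost a constant factor.
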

To prove this we need the Marcinkiewicz--Zygmund inequality with the following `$p$-dependence':

\begin{theorem}[Marcinkiewicz--Zygmund inequality, {\cite[Theorem 2, p231]{yaohan::}}]\label{thm.mzi}
Suppose that $p \in [2,\infty)$, $Z_1,\dots,Z_k$ are independent random variables with $\E{Z_i}=0$ and $\E{|Z_i|^p}<\infty$. Then
\begin{equation*}
\E{\left|\sum_{i=1}^k{Z_i}\right|^p} \leq (\newconstbig{mzi}p)^{p/2}k^{p/2-1}\sum_{i=1}^k{\E{|Z_i|^p}}
\end{equation*}
\end{theorem}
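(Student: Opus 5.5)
The plan is the standard route: symmetrisation, then Khintchine's inequality conditionally on the $Z_i$, then H\"older. All constants are absorbed into $\refconstbig{mzi}$. We may assume the $Z_i$ are real: for complex $Z_i$ we apply the real case to the real and imaginary parts, each of which still has mean zero, and combine using $|u+iv|^p\leq 2^{p-1}(|u|^p+|v|^p)$. This costs only a factor $2^{O(p)}$, which fits inside $(\refconstbig{mzi}p)^{p/2}$ once the constant is enlarged.

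\emph{Step 1 (symmetrisation).} Let $Z_1',\dots,Z_k'$ be an independent copy of $Z_1,\dots,Z_k$. Since $\E{Z_i'}=0$, conditional Jensen (conditioning on the $Z_i$) gives
\begin{equation*}
\E{\left|\sum_i Z_i\right|^p}\leq \E{\left|\sum_i (Z_i-Z_i')\right|^p}.
\end{equation*}
Each $Z_i-Z_i'$ is symmetric, so the right-hand side is unchanged if we replace $Z_i-Z_i'$ by $\epsilon_i(Z_i-Z_i')$, where the $\epsilon_i$ are independent uniform $\pm1$ signs independent of everything else. Convexity then bounds it by $2^{p-1}\bigl(\E{|\sum_i\epsilon_iZ_i|^p}+\E{|\sum_i\epsilon_iZ_i'|^p}\bigr)=2^p\E{|\sum_i\epsilon_iZ_i|^p}$.

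\emph{Step 2 (Khintchine with $p^{p/2}$ growth).} For fixed reals $a_1,\dots,a_k$, independence and $\cosh x\leq e^{x^2/2}$ give
\begin{equation*}
\E{\exp\left(t\sum_i\epsilon_ia_i\right)}=\prod_i\cosh(ta_i)\leq \exp\left(t^2\|a\|_2^2/2\right).
\end{equation*}
Expanding both sides as power series in $t$ and comparing even coefficients shows that, for each integer $m\geq 1$, the moment $\E{(\sum_i\epsilon_ia_i)^{2m}}$ is at most $\frac{(2m)!}{2^m m!}\|a\|_2^{2m}\leq (2m)^m\|a\|_2^{2m}$. The coefficient comparison is legitimate because the odd moments vanish and every coefficient on the left is a nonnegative multiple of an even moment. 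For general $p\geq 2$, take $2m$ to be the least even integer with $2m\geq p$, so $2m\leq p+2\leq 2p$, and use monotonicity of $L_q$ norms on a probability space. This gives
\begin{equation*}
\E{\left|\sum_i\epsilon_ia_i\right|^p}\leq (Cp)^{p/2}\|a\|_2^p.
\end{equation*}
This is the step I expect to carry the real content, since it is the only place where the correct $p$-dependence $p^{p/2}$, rather than something like $p^p$, has to be secured.

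\emph{Step 3 (conclusion).} Condition on the $Z_i$ and apply Step 2 with $a_i=Z_i$ to get $\E{|\sum_i\epsilon_iZ_i|^p}\leq (Cp)^{p/2}\E{(\sum_i Z_i^2)^{p/2}}$. H\"older's inequality with exponents $p/2$ and $p/(p-2)$ gives $(\sum_i Z_i^2)^{p/2}\leq k^{p/2-1}\sum_i|Z_i|^p$. Taking expectations and collecting the factors $2^p$ (and $2^{O(p)}$ from the complex reduction) into the constant yields the claimed bound with a suitable absolute $\refconstbig{mzi}$.
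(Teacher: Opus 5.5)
The paper gives no proof of this statement. It is quoted from Ren and Liang \cite{yaohan::} and used as a black box in Lemma \ref{lem.crosis}. Your argument (symmetrisation, then conditional Khintchine, then H\"older) is therefore a genuinely different, self-contained route, and its structure is sound. The following steps all check out:
the reduction to real variables;
symmetrisation via an independent copy and Rademacher signs, at a cost of $2^p$;
the choice of an even integer $2m\in[p,2p]$ together with monotonicity of $L_q$ norms;
the H\"older step $(\sum_iZ_i^2)^{p/2}\leq k^{p/2-1}\sum_i|Z_i|^p$.
Together they deliver exactly the $(Cp)^{p/2}k^{p/2-1}$ shape the paper needs. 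The citation gives an explicit constant for free. Your argument removes the dependence on the literature, using only the device $\cosh x\leq e^{x^2/2}$ that the paper already exploits in Lemma \ref{lem.changbd}.

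One justification is wrong as written: the coefficient comparison in Step 2. A pointwise inequality between two power series with nonnegative coefficients does not imply a coefficientwise one. For example, $t^2\leq 1+t^4$ for all real $t$, yet the coefficients of $t^2$ are $1$ and $0$. So \emph{odd moments vanish and all coefficients are nonnegative} does not license the comparison.

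The moment bound you state is nevertheless true, and either of the following repairs it.

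(i) The inequality $\cosh x\leq e^{x^2/2}$ in fact holds coefficientwise, because $(2n)!\geq 2^n n!$. Coefficientwise domination between series with nonnegative coefficients is preserved under products. Hence the $t^{2m}$-coefficient of $\prod_i\cosh(ta_i)=\E e^{tS}$, namely $\E S^{2m}/(2m)!$ with $S:=\sum_i\epsilon_ia_i$, is at most the $t^{2m}$-coefficient $\|a\|_2^{2m}/(2^m m!)$ of $\exp(t^2\|a\|_2^2/2)$.

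(ii) Closer to what the paper does in Lemma \ref{lem.changbd}, keep a single term: $\frac{t^{2m}}{(2m)!}\E S^{2m}\leq\E\cosh(tS)\leq e^{t^2\|a\|_2^2/2}$ for every $t>0$. Taking $t^2=2m/\|a\|_2^2$ (for $a\neq 0$) gives $\E S^{2m}\leq(2m)!\,(e/2m)^m\|a\|_2^{2m}\leq(2em)^m\|a\|_2^{2m}$. This is all the $(Cp)^{p/2}$ dependence requires.
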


\begin{proof}[Proof of Lemma \ref{lem.crosis}]
Let $k:=\lceil 64\refconstbig{mzi}\epsilon^{-2}L^{2/p}p\rceil$ and $\refconstbig{csl}>1$ be absolute such that the inequality $\refconstbig{csl}\epsilon^{-2}L^2p \geq k+1$ holds. Let $s_1,\dots,s_k$ be independent uniformly distributed $S$-valued random variables, and for each $y \in T+{B_0}-{B_0}$ define the random variable $Z_i(y):=f(y-s_i) - f \ast \mu_{S}(y)$.  These are independent and have mean $0$, so it follows by Theorem \ref{thm.mzi} that
\begin{equation*}
\E_{s \in S^k}{\left| \sum_{i=1}^k{Z_i(y)}\right|^p} \leq (\refconstbig{mzi}p)^{p/2}k^{p/2-1}\sum_{i=1}^k{\E_{s \in S^k}{|Z_i(y)|^p}} \text{ for all }y \in T+{B_0}-{B_0}.
\end{equation*}
Integrating against $\dd\mu_{T+{B_0}-{B_0}}(y)$ and dividing by $k^p$ we get
\begin{equation*}
\E_{s \in S^k}{\int{ \left|\frac{1}{k}\sum_{i=1}^k{Z_i(y)}\right|^p\dd\mu_{T+{B_0}-{B_0}}(y)}}\leq (\refconstbig{mzi}pk^{-1})^{p/2}\E_{s \in S^k}{\frac{1}{k}\sum_{i=1}^k{\int{|Z_i(y)|^p\dd\mu_{T+{B_0}-{B_0}}(y)}}}.
\end{equation*}
Now $Z_i(y)=f(y-s_i)-f \ast \mu_{S}(y)$ and so the right hand term can be estimated using the triangle inequality and nesting of norms:
\begin{equation*}
\left(\int{|f(y-s_i) - f \ast \mu_{S}(y)|^p\dd\mu_{T+{B_0}-{B_0}}(y)}\right)^{1/p} \leq 2\sup_{s' \in S}{\|f\|_{L_p(\mu_{T+{B_0}-{B_0}-s'})}}.
\end{equation*}
Hence, recalling the notation $\rho_{-s_i}(f)(y)=f(y-s_i)$ and the definition of $k$, we have
\begin{equation*}
\E_{s \in S^k}{\left\|\frac{1}{k}\sum_{i=1}^k{\rho_{-s_i}(f)} - f \ast \mu_{S}\right\|_{L_p(\mu_{T+{B_0}-{B_0}})}^p}\leq \frac{1}{L}\left(\frac{\epsilon}{4}\right)^p\sup_{s' \in S}{\|f\|_{L_p(\mu_{T+{B_0}-{B_0}-s'})}^p} .
\end{equation*}
Write
\begin{equation*}
\mathcal{L}:=\left\{s \in S^k: \left\|\frac{1}{k}\sum_{i=1}^k{\rho_{-s_i}(f)} - f \ast \mu_{S}\right\|_{L_p(\mu_{T+{B_0}-{B_0}})}\leq \frac{\epsilon}{2L^{1/p}}\sup_{s' \in S}{\|f\|_{L_p(\mu_{T+{B_0}-{B_0}-s'})}}\right\}.
\end{equation*}
By averaging $\P(\mathcal{L}^c) \leq 2^{-p}$ and hence $\P(\mathcal{L}) \geq 1-2^{-p} \geq 1/2$. For $\Delta:=\{(x,\dots,x): x \in {B_0}\}$ we have $\mathcal{L}+\Delta  \subset (S+{B_0})^k$, so $\mu_{G^k}(\mathcal{L} +\Delta)\leq 2K^k\mu_{G^k}(\mathcal{L})$. By the Cauchy-Schwarz inequality
\begin{align*}
 \langle 1_{\Delta} \ast 1_{-\Delta},1_{-\mathcal{L}} \ast 1_{\mathcal{L}}\rangle_{L_2(\mu_{G^k})} & = \|  1_\mathcal{L}\ast 1_\Delta \|_{L_2(\mu_{G^k})}^2\geq \frac{1}{2K^k}\mu_{G^k}(\Delta)^2\mu_{G^k}(\mathcal{L}).
\end{align*}
Hence there is $t \in {B_0}$ and $0_G \in X \subset {B_0}-t$ with $\mu_G(X) \geq \mu_G({B_0})/2K^k\geq K^{-(k+1)}\mu_G({B_0})$ such that for all $x \in X$ then there is $s \in \mathcal{L}$ such that $s-(x,\dots,x) \in \mathcal{L}$, and in particular
\begin{align*}
& \left\|\frac{1}{k}\sum_{i=1}^k{\rho_{-(s_i-x)}(f)} -  \rho_x(f \ast \mu_{S})\right\|_{L_p(\mu_{T})} = \left\|\frac{1}{k}\sum_{i=1}^k{\rho_{-s_i}(f)}- f \ast \mu_{S} \right\|_{L_p(\mu_{T+x})}\\ & \qquad\qquad\qquad\qquad\leq \left(\frac{\mu_G(T+{B_0}-{B_0})}{\mu_G(T)}\right)^{1/p}\frac{\epsilon}{2L^{1/p}}\sup_{s' \in S}{\|f\|_{L_p(\mu_{T+{B_0}-{B_0}-s'})}}.
\end{align*}
Finally, apply the triangle inequality to add the two inequalities when $x=0_G$ and when $x \in X$ is arbitrary.
\end{proof}

\bibliographystyle{halpha}

\bibliography{references}

\begin{thebibliography}{{Sch}16}
\expandafter\ifx\csname url\endcsname\relax
  \def\url#1{\texttt{#1}}\fi
\expandafter\ifx\csname doi\endcsname\relax
  \def\doi#1{\burlalt{doi:#1}{http://dx.doi.org/#1}}\fi
\expandafter\ifx\csname urlprefix\endcsname\relax\def\urlprefix{URL }\fi
\expandafter\ifx\csname href\endcsname\relax
  \def\href#1#2{#2}\fi
\expandafter\ifx\csname burlalt\endcsname\relax
  \def\burlalt#1#2{\href{#2}{#1}}\fi

\bibitem[AH72]{Abbott:1972aa}
H.~Abbott and D.~Hanson.
\newblock A problem of {S}chur and its generalizations.
\newblock {\em Acta Arithmetica}, 20(2):175--187, 1972.
\newblock \urlprefix\url{http://eudml.org/doc/205076}.

\bibitem[Bou99]{bou::5}
J.~Bourgain.
\newblock On triples in arithmetic progression.
\newblock {\em Geom. Funct. Anal.}, 9(5):968--984, 1999.
\newblock \doi{10.1007/s000390050105}.

\bibitem[BS23]{Bloom:2023aa}
T.~F. Bloom and O.~Sisask.
\newblock The {K}elley--{M}eka bounds for sets free of three-term arithmetic
  progressions.
\newblock {\em Essential Number Theory}, 2(1):15--44, 2023.
\newblock \doi{10.2140/ent.2023.2.15}.

\bibitem[Cha02]{cha::0}
M.-C. Chang.
\newblock A polynomial bound in {F}reiman's theorem.
\newblock {\em Duke Math. J.}, 113(3):399--419, 2002.
\newblock \doi{10.1215/S0012-7094-02-11331-3}.

\bibitem[CP20]{chapre::}
J.~Chapman and S.~Prendiville.
\newblock On the {R}amsey number of the {B}rauer configuration.
\newblock {\em Bulletin of the London Mathematical Society}, 52(2):316--334,
  2020, \burlalt{arXiv:1904.07567}{http://arxiv.org/abs/arXiv:1904.07567}.
\newblock \doi{10.1112/blms.12327}.

\bibitem[CS10]{crosis::}
E.~S. Croot and O.~Sisask.
\newblock A probabilistic technique for finding almost-periods of convolutions.
\newblock {\em Geom. Funct. Anal.}, 20(6):1367--1396, 2010,
  \burlalt{arXiv:1003.2978}{http://arxiv.org/abs/arXiv:1003.2978}.
\newblock \doi{10.1007/s00039-010-0101-8}.

\bibitem[CS17]{cwasch::0}
K.~Cwalina and T.~Schoen.
\newblock Tight bounds on additive {R}amsey-type numbers.
\newblock {\em Journal of the London Mathematical Society}, 96(3):601--620,
  2017.
\newblock \doi{10.1112/jlms.12081}.

\bibitem[FK06]{foxkle::}
J.~Fox and D.~J. Kleitman.
\newblock On {R}ado's boundedness conjecture.
\newblock {\em J. Combin. Theory Ser. A}, 113(1):84--100, 2006.
\newblock \doi{10.1016/j.jcta.2005.07.004}.

\bibitem[GG55]{gregle::0}
R.~E. {Greenwood} and A.~M. {Gleason}.
\newblock {Combinatorial relations and chromatic graphs.}
\newblock {\em {Can. J. Math.}}, 7:1--7, 1955.
\newblock \doi{10.4153/CJM-1955-001-4}.

\bibitem[Gow98]{gow::4}
W.~T. Gowers.
\newblock A new proof of {S}zemer\'edi's theorem for arithmetic progressions of
  length four.
\newblock {\em Geom. Funct. Anal.}, 8(3):529--551, 1998.
\newblock \doi{10.1007/s000390050065}.

\bibitem[GR07]{greruz::0}
B.~J. Green and I.~Z. Ruzsa.
\newblock Freiman's theorem in an arbitrary {A}belian group.
\newblock {\em J. Lond. Math. Soc. (2)}, 75(1):163--175, 2007,
  \burlalt{arXiv:math/0505198}{http://arxiv.org/abs/arXiv:math/0505198}.
\newblock \doi{10.1112/jlms/jdl021}.

\bibitem[Gre04]{gre::6}
B.~J. Green.
\newblock Spectral structure of sets of integers.
\newblock In {\em Fourier analysis and convexity}, Appl. Numer. Harmon. Anal.,
  pages 83--96. Birkh\"auser Boston, Boston, MA, 2004.
\newblock \doi{10.1007/978-0-8176-8172-2\_4}.

\bibitem[Gre05]{gre::9}
B.~J. Green.
\newblock Finite field models in additive combinatorics.
\newblock In {\em Surveys in combinatorics 2005}, volume 327 of {\em London
  Math. Soc. Lecture Note Ser.}, pages 1--27. Cambridge Univ. Press, Cambridge,
  2005, \burlalt{arXiv:math/0409420}{http://arxiv.org/abs/arXiv:math/0409420}.
\newblock \doi{10.1017/CBO9780511734885.002}.

\bibitem[HW60]{Hardy:1960aa}
G.~H. Hardy and E.~M. Wright.
\newblock {\em An Introduction to the Theory of Numbers}.
\newblock Clarendon Press, Oxford, 4th edition, 1960.

\bibitem[KM23]{kelmek::0}
Z.~Kelley and R.~Meka.
\newblock Strong bounds for $3$-progressions, 2023.
\newblock \doi{10.48550/ARXIV.2302.05537}.

\bibitem[Ko{\'s}25]{Kosciuszko:2025aa}
T.~Ko{\'s}ciuszko.
\newblock {S}chur-like numbers and a lemma of {S}hearer, 2025,
  \burlalt{2507.21656}{http://arxiv.org/abs/2507.21656}.
\newblock \urlprefix\url{https://arxiv.org/abs/2507.21656}.

\bibitem[LR14]{lanrob::}
B.~M. Landman and A.~Robertson.
\newblock {\em Ramsey theory on the integers}, volume~73 of {\em Student
  Mathematical Library}.
\newblock American Mathematical Society, Providence, RI, second edition, 2014.
\newblock \doi{10.1090/stml/073}.

\bibitem[Pel24]{Peluse:2024aa}
S.~Peluse.
\newblock {\em Finite Field Models in Arithmetic Combinatorics -- Twenty Years
  On}, pages 159--200.
\newblock London Mathematical Society Lecture Note Series. Cambridge University
  Press, 2024.
\newblock \doi{10.1017/9781009490559.007}.

\bibitem[Rad33]{rad::1}
R.~Rado.
\newblock Studien zur {K}ombinatorik.
\newblock {\em Mathematische Zeitschrift}, 36:424--480, 1933.
\newblock \urlprefix\url{http://eudml.org/doc/168408}.

\bibitem[RL01]{yaohan::}
Y.-F. Ren and H.-Y. Liang.
\newblock On the best constant in {M}arcinkiewicz-{Z}ygmund inequality.
\newblock {\em Statistics \& Probability Letters}, 53(3):227--233, 2001.
\newblock \doi{10.1016/S0167-7152(01)00015-3}.

\bibitem[{Sch}16]{sch::4}
I.~{Schur}.
\newblock {\"Uber die Kongruenz $x^m+y^m\equiv z^m (\text{mod.} p)$.}
\newblock {\em {Jahresber. Dtsch. Math.-Ver.}}, 25:114--117, 1916.
\newblock \urlprefix\url{http://eudml.org/doc/145475}.

\bibitem[TV06]{taovu::}
T.~C. Tao and V.~H. Vu.
\newblock {\em Additive combinatorics}, volume 105 of {\em Cambridge Studies in
  Advanced Mathematics}.
\newblock Cambridge University Press, Cambridge, 2006.
\newblock \doi{10.1017/CBO9780511755149}.

\bibitem[Wol15]{wol::3}
J.~Wolf.
\newblock Finite field models in arithmetic combinatorics---ten years on.
\newblock {\em Finite Fields Appl.}, 32:233--274, 2015.
\newblock \doi{10.1016/j.ffa.2014.11.003}.

\end{thebibliography}

\end{document}